\tikzset{ font={\fontsize{9pt}{12}\selectfont}}
\newcommand{\overbar}[1]{\mkern 1.5mu\overline{\mkern-1.5mu#1\mkern-1.5mu}\mkern 1.5mu}
\newtheorem*{thmA}{Theorem A}
\newtheorem*{thmB}{Theorem B}
\newtheorem*{thmC}{Theorem C}
\theoremstyle{plain}
\newtheorem{theorem}{Theorem}[section]
\newtheorem{lemma}[theorem]{Lemma}
\newtheorem{corollary}[theorem]{Corollary}
\newtheorem{prop}[theorem]{Proposition}
\newtheorem{definition}{Definition}
\newtheorem{remark}{Remark}
\newcommand{\rs}{\hat{\mathbb{C}}}
\newcommand{\bl}{\mathcal{A}^{*}_{\la}(\infty)}
\newcommand{\tl}{T_{\la}}
\newcommand{\ala}{A_{\la}}
\newcommand{\dl}{D_{\la}}
\newcommand{\uc}{\mathcal{U}_{\nu}}
\newcommand{\modulu}{|\la |}
\newcommand{\cl}{\nu_{\lambda}}
\newcommand{\sefu}{\mathcal{C}}
\newcommand{\la}{\lambda}
\newcommand{\La}{\Lambda}
\newcommand{\jl}{\mathcal{S}_{n,d,\lambda}}
\newcommand{\jlz}{\mathcal{S}_{n,d,\lambda_0}}
\newcommand{\jn}{S_{n,a,Q}}
\newsavebox{\savepar}
\begin{document}

\title[Achievable connectivities of Fatou components for a family of rational maps]{Achievable connectivities of Fatou components for a family of singular perturbations}

\date{\today}

\author{Jordi Canela}
\address{Departament de Matem\`atiques, Universitat Jaume I, 12071 Castell\'o de la Plana, Spain}
\email{canela@uji.es}
\author{Xavier Jarque}
\address{Departament de Matem\`atiques i Inform\`atica at Universitat de Barcelona and Barcelona Graduate School of Mathematics, 08007 Barcelona, Catalonia.}
\email{xavier.jarque@ub.edu}
\author{Dan Paraschiv}
\address{Departament de Matem\`atiques i Inform\`atica at Universitat de Barcelona and Barcelona Graduate School of Mathematics, 08007 Barcelona, Catalonia.}
\email{dan.paraschiv@ub.edu}

\thanks{The first author was supported by Spanish Ministry of Economy
	and Competitiveness, through the María de Maeztu Programme for Units of Excellence in
	R\&D (MDM-2014-0445), by BGSMath Banco de Santander Postdoctoral 2017, and by the project UJI-B2019-18 from Universitat Jaume I. The second and third authors were  supported by MINECO-AEI grant
MTM-2017-86795-C3-2-P. The second author was also supported by AGAUR grant 2017 SGR 1374. The third author was also supported by the Spanish government grant FPI PRE2018-086831.}\

\begin{abstract}
In this paper we study the connectivity of Fatou components for maps in a large family of singular perturbations. We prove that, for some parameters inside the family, the dynamical planes for the corresponding maps present Fatou components of arbitrarily large connectivity and we determine precisely these connectivities. In particular, these results extend the ones obtained in \cite{Can1,Can2}. 
\vspace{0.5cm}

\textit{Keywords: holomorphic dynamics, Fatou and Julia sets,  singular perturbation, connectivity of Fatou components.}

\end{abstract}

\maketitle

\section{Introduction}\label{sec:Introduction}

In the recent decades there has been an increasing interest in studying families of rational maps usually called {\it singular perturbations}. Roughly speaking, a family is called a singular perturbation if it is defined by a {\it base} family (called the {\it unperturbed family} and for which we have a deep understanding of the dynamical plane) plus a {\it local} perturbation, that is, a perturbation which has a significant effect on the orbits of points in some part(s) of the dynamical plane, but a very small dynamical relevancy on other regions.

Singular perturbations, no matter the concrete formulas, have some common properties which make their study interesting. On the one hand, the degree of the unperturbed family is smaller than the degree of the perturbed one. Consequently, one should expect richer dynamics for singular perturbations than for the unperturbed maps. On the other hand, most of this new freedom arising from the perturbation may be captive of the dynamical properties of the unperturbed family. The balance between these two scenarios has become very successful in finding new dynamical phenomena. 

The relation between the topology of the dynamically invariant sets (Fatou and Julia set) and the behaviour of the critical orbit(s) is an important issue when studying the dynamical plane of a particular rational map. A paradigmatic example of this is the Dichotomy Theorem for the quadratic family. In this way, singular perturbations are somehow a perfect scenario to observe new phenomena for the invariant sets with respect to the unperturbed maps, for which we usually observe a tame topology. Indeed, the main goal of this paper is to investigate in this direction and to prove that for a certain family of singular perturbations we can construct examples for which, in the same dynamical plane, there are Fatou components with given arbitrarily high connectivities. 

Let $f:\rs \to \rs$ be a rational map acting on the Riemann sphere. Then $f$ partitions the dynamical plane into $\mathcal{J}(f)$, the Julia set, and $\mathcal{F}(f)$, the Fatou set. The Julia set is closed and coincides with the set of points $z\in \hat{\mathbb C}$ where the family of iterates $\{f^{n}|U\}_{n\geq 0}$ is not a normal family for any  neighbourhood $U$ of $z$. Its complement, the Fatou set,  is an open set and its connected components are called Fatou components. By the No Wandering Domains Theorem, Fatou components of rational maps are either preperiodic or periodic (see \cite{Su}). By the Classification Theorem of periodic Fatou components (see \cite{Mi1}, Theorem 16.1), every periodic Fatou component either belongs to the immediate basin of attraction of an attracting or parabolic cycle, or is a simply connected rotation domain (Siegel disk) or is a doubly connected rotation domain (Herman ring). The existence of periodic Fatou components can by studied by analysing the orbits of critical points, i.e.\ points where $f'$ vanishes. Indeed, every immediate basin of attraction contains, at least, a critical point while Siegel disks and Herman rings have critical orbits accumulating on their boundaries.

The connectivity of a domain $D \subset \rs$ is defined as the number of connected components of its boundary. It is known that periodic Fatou components have connectivity $1,$ $2,$ or $\infty$. Indeed, Siegel disks  have connectivity 1, Herman rings have connectivity 2, and immediate basins of attraction have connectivity 1 or $\infty$. Preperiodic Fatou components can have finite connectivity greater than 2. The first such example, with connectivities 3 and 5,  was presented in \cite{Bear}. Moreover, for any given $n\in\mathbb{N}$, there are examples of rational maps with Fatou components of connectivity $n$. These examples can either be obtained by quasiconformal surgery  (see \cite{BKL}) or by giving explicit families of rational maps (see \cite{QJ} and \cite{Ste}). However, the degree of the rational maps obtained in all previous examples grows rapidly with $n$. To our knowledge, the first example of rational map whose  dynamical plane contains Fatou components of arbitrarily large finite connectivities was presented in \cite{Can1} (see also \cite{Can2}) by using singular perturbations. However, in these papers it is not shown which precise connectivities can actually be attained. The goal of this paper is to study the attainable connectivities for a wider family of singular perturbations which includes the ones studied in \cite{Can1, Can2}.
We also want to remark that while this paper was being prepared we knew that, independently,  professor Hiroyuki has obtained another family of rational maps with Fatou components of arbitrarily large connectivity \cite{Hir}.

Singular perturbations of rational maps were introduced by McMullen in \cite{McM1}. He proposed the study of the family
\begin{equation} \label{eq:McMullen}
Q_{n,d,\la}(z)= z^n+\frac{\la}{z^d},
\end{equation}
where $n,d\geq 2$ and $\lambda\in \mathbb C$,  $|\lambda|$ small. Observe that in \eqref{eq:McMullen} the unperturbed map is the {\it simplest} possible: $z^n$. He considered the case $n=2$ and $d=3$ and he  proved that if $|\lambda|$ is small enough then the Julia set is a Cantor sets of quasicircles (the result actually holds for $n$ and $d$ satisfying $1/n+1/d<1$ (compare \cite{DLU})). Later,  Devaney, Look, and Uminsky \cite{DLU} considered \eqref{eq:McMullen} as a $\lambda$-family of rational maps and they extended McMullen's result by proving the Escape Trichotomy. More specifically, they showed that if all critical points belong to the basin of attraction of infinity then the Julia set is a Cantor set, a Sierpinski carpet, or a Cantor set of quasicircles (McMullen's case). The proof relays on the fact that there is a symmetry in the dynamical plane which implies that there is a unique free critical orbit (the symmetry forces all critical points to follow symmetric orbits). Other models similar to \eqref{eq:McMullen} have also been considered. For instance, in \cite{BDGR,GMR} the authors consider singular perturbations of polynomials of the form $z^n+c,\ c\in \mathbb C$, choosing $c$ appropriately. Those examples have shown Julia and Fatou sets with new and rich topology, but the connectivity of the Fatou components is kept as 1, 2 or $\infty$. 

The examples mentioned in the previous paragraph are done perturbing maps with no free critical points: one or more poles are added to superattracting cycles which contain no  critical points, other than the critical points of the cycle, in their basins of attraction. A next natural step is to consider singular perturbations of maps with free critical points. A good candidate for such a perturbation is the family of Blaschke products
$$
B_{n,a}(z)= z^n\frac{z-a}{1-\overline{a}z}, \quad \mbox{where } a\in \mathbb C \mbox{ and } n\geq 2.
$$
See \cite{CFG1} for an introduction to the dynamics of these maps for $n=3$. If $a$ belongs to the punctured unit disk $\mathbb{D}^*:=\mathbb{D}\setminus\{0\}$, the maps $B_{n, a}$ restrict to automorphisms of the unit disk whose dynamical plane is trivial. Indeed, its Fatou set consists of two invariant components given by the immediate basin of attraction $\mathcal{A}^*(0)$ of $z=0$ (the unit disk) and the immediate basin of attraction  $\mathcal{A}^*(\infty)$ of $z=\infty$ (the complement of the closed unit disk). Their common boundary component, the unit circle, is the Julia set of these maps. Moreover, if $a\in \mathbb{D}^* $ the map $B_{n, a}$ has only two simple critical points $c_-\in \mathcal{A}^*(0)$ and $c_+\in \mathcal{A}^*(\infty)$, other than the superatracting fixed points $z=0$ and $z=\infty$. In \cite{Can1,Can2} the author studied the family of singular perturbations of the maps $B_{n,a}$ given by
$$
B_{n,d,a,\la}(z)=z^n\frac{z-a}{1-\overline{a}z}+\frac{\la}{z^d}, 
$$
where $a\in \mathbb{D}^*$ and $\lambda\in\mathbb{C}^*:=\mathbb{C}\setminus\{0\}$, for $n=3$ and $d=2$. Compared to McMullen's singular perturbations, these maps can present a much richer dynamics since their free critical points (which come from the the singular perturbation and the continuous extension of $c_{\pm}$) are not tied by any kind of symmetry. Despite that,  in \cite{Can1} it was proven that if $|\lambda|$ is small enough the family $B_{3,2,a,\la}(z)$ is essentially unicritical: all critical points but the continuous extension $c_-(\lambda)$ of $c_-$ belong to the basin of attraction of infinity $\mathcal{A}(\infty)$. In that case, if $c_-(\lambda)$ belongs to a Fatou component in $\mathcal{A}(\infty)$ which surrounds $z=0$, the dynamical plane has Fatou components of arbitrarily large finite connectivity. The actual existence of parameters for which this actually happens was proven in \cite{Can2}. 
We want to point out that the same results can be proven for $n,d\geq 2$ such that $1/n+1/d<1$. 
In Figure \ref{fig:blaschke} we illustrate the dynamical plane of $B_{n,d,a,\la}(z)$ for $a=0.5$, $d=3$, and different values of $n$ and $\lambda$.

The goal of this paper is to extend the results in \cite{Can1,Can2} to a wider family of singular perturbations and to study which connectivities are attainable for such family. With this aim we consider the family of degree $n+1$ rational maps given by
\begin{equation}
\jn(z)=\frac{z^n(z-a)}{Q(z)}, 
\end{equation} 
where $n\geq 2$, $a\in \mathbb{C}^{*}$, and $Q$ is a polynomial of degree at most $n$. On the one hand it is clear that the family $\jn$ contains the family $B_{n,a}$. On the other hand it is worth to be noticed that $\jn$ also includes the family 
$$
M_{n,a}(z)=z^n(z-a), 
$$
where $n\geq 2$ and  $a\in \mathbb{C}$. This family was first introduced by Milnor in 1991 (see \cite{Mi2}) when studying cubic polynomials ($n=2$) and was later studied by Roesch \cite{Ro1} for $n\geq2$. If $a\neq0$, these maps have $z=0$ and $z=\infty$ as superattracting fixed points of local degree $n$ and $n+1$, respectively. Moreover, they have a unique free critical point $c_a\ne 0$ and the global phase portrait settles down on its dynamical behaviour. It is easy to see that, if $|a|$ is small enough, $c_a$ belongs to the immediate basin of attraction of $z=0$ and the Julia set consists of a quasicircle which separates the immediate basins of attraction of $z=0$ and $z=\infty$ (see \thref{cor:quasicircle}). In this sense, for $|a|$ small the family $M_{n,a}$ can be understood as a simplified version of $B_{n,a}$,  $|a|<1$, where there is no free critical point in $\mathcal{A}^*(\infty)$ but the Julia set is a quasicicle instead of a circle.   

\begin{figure}
	\centering
	
	{
		\begin{tikzpicture}
			\begin{axis}[width=0.49\textwidth, axis equal image, scale only axis,  enlargelimits=false, axis on top]
				\addplot graphics[xmin=-1.1,xmax=1.1,ymin=-1.1,ymax=1.1] {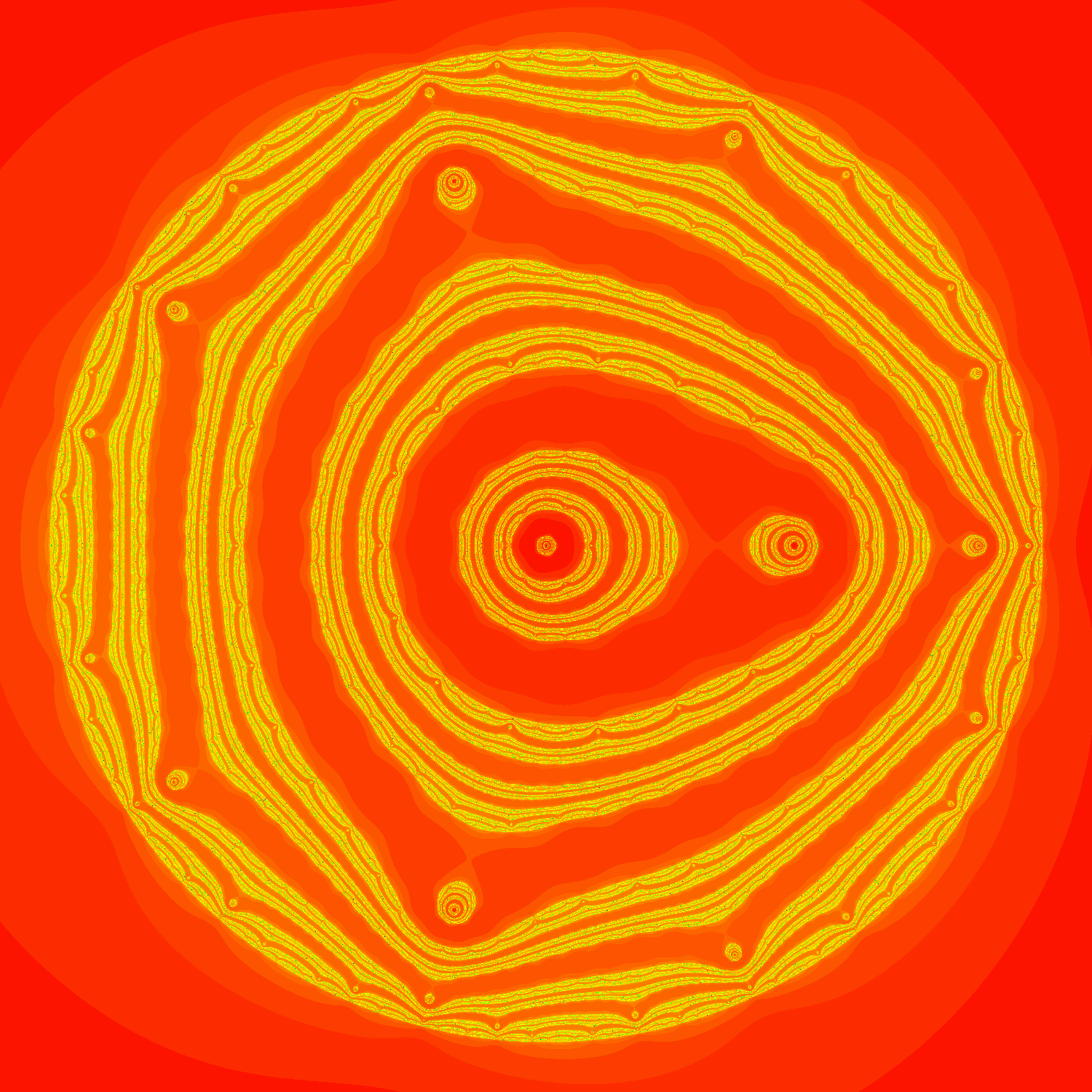};
			\end{axis}
		\end{tikzpicture}
	}
	\hfill
	{
		\begin{tikzpicture}
			\begin{axis}[width=0.49\textwidth, axis equal image, scale only axis,  enlargelimits=false, axis on top]
				\addplot graphics[xmin=-1.1,xmax=1.1,ymin=-1.1,ymax=1.1] {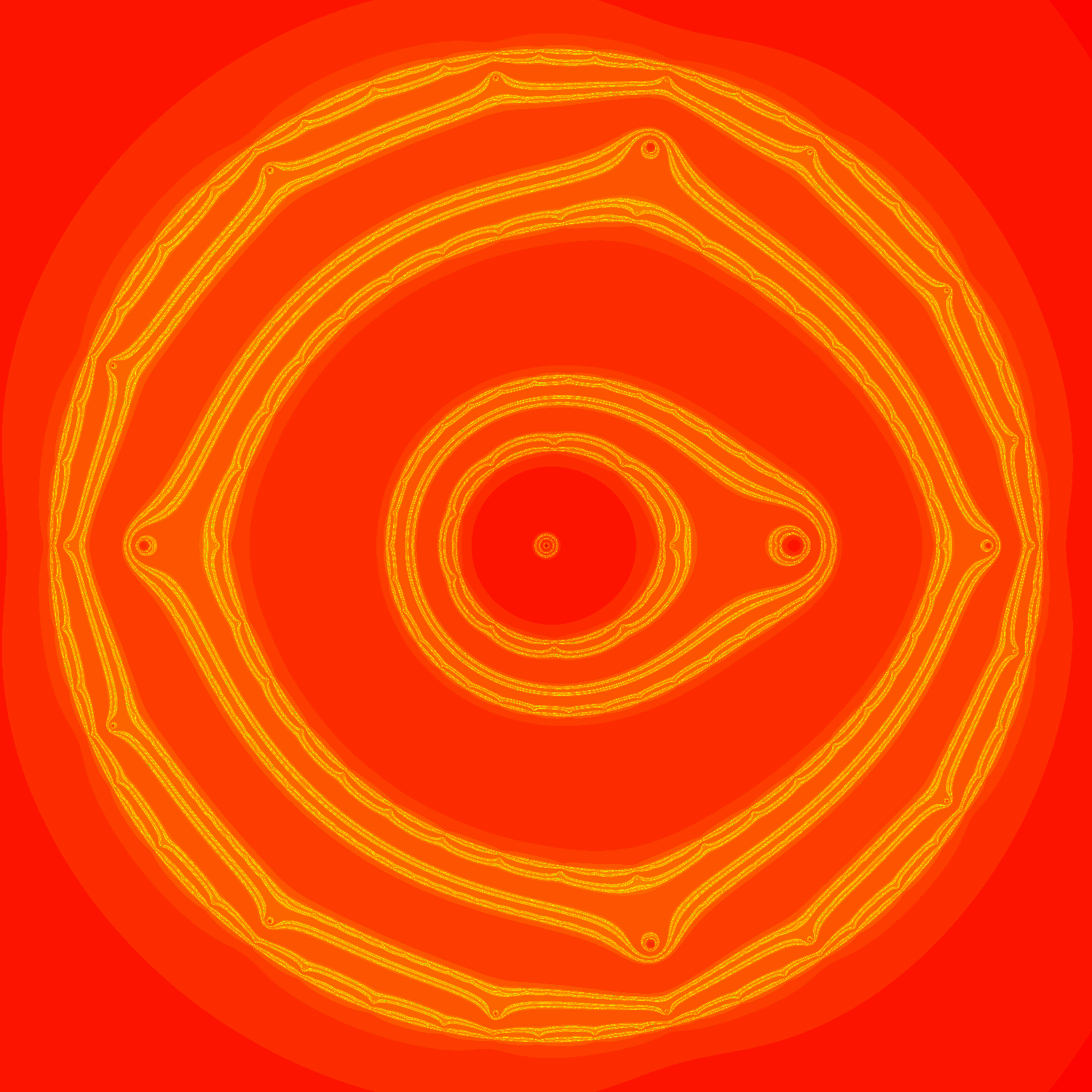};
			\end{axis}
		\end{tikzpicture}
		
	}
	
	\vglue 0.5truecm 
	
	{
		\begin{tikzpicture}
			\begin{axis}[width=0.49\textwidth, axis equal image, scale only axis,  enlargelimits=false, axis on top]
				\addplot graphics[xmin=-1.1,xmax=1.1,ymin=-1.1,ymax=1.1] {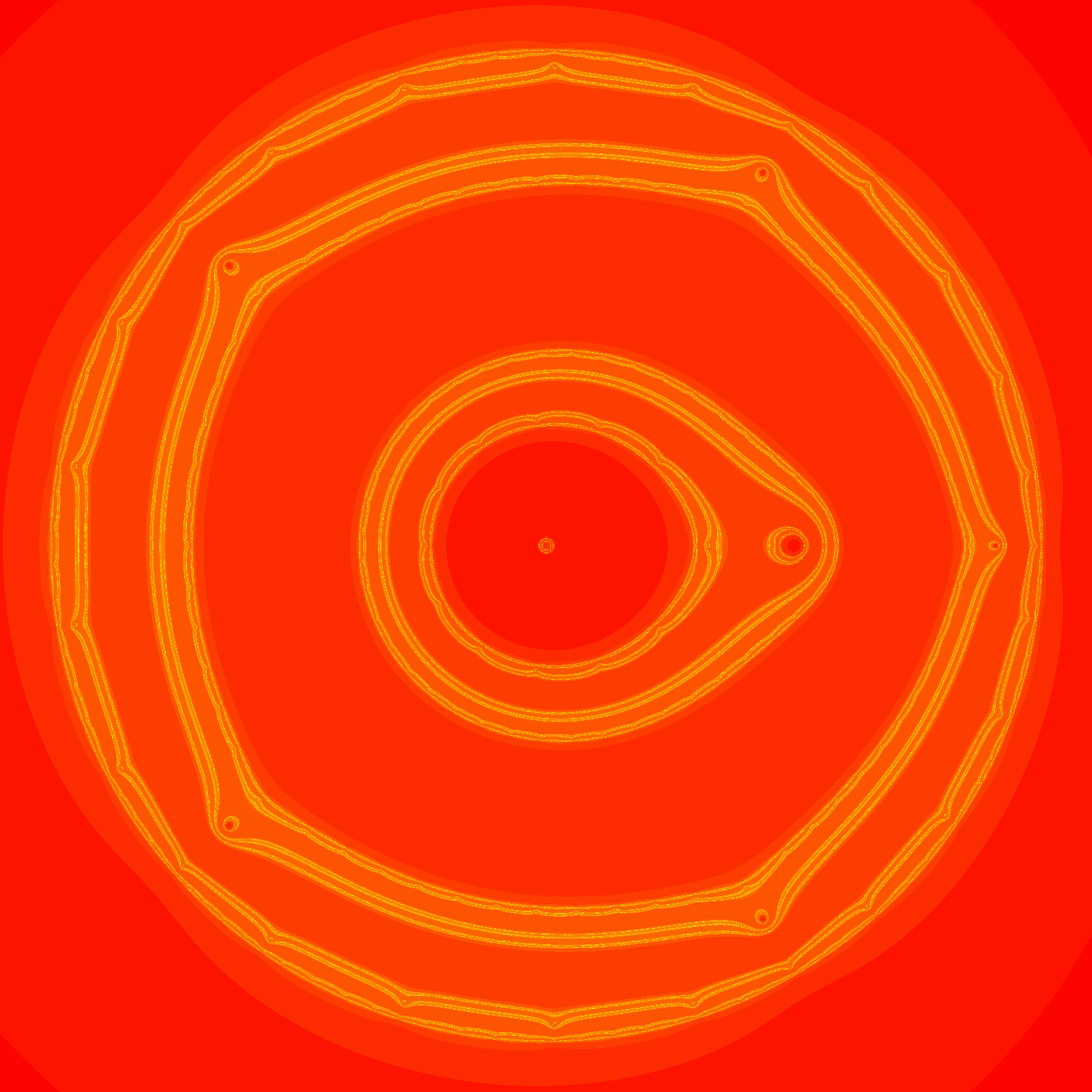};
			\end{axis}
		\end{tikzpicture}
	}
	\hfill
	{
		\begin{tikzpicture}
			\begin{axis}[width=0.49\textwidth, axis equal image, scale only axis,  enlargelimits=false, axis on top]
				\addplot graphics[xmin=-1.1,xmax=1.1,ymin=-1.1,ymax=1.1] {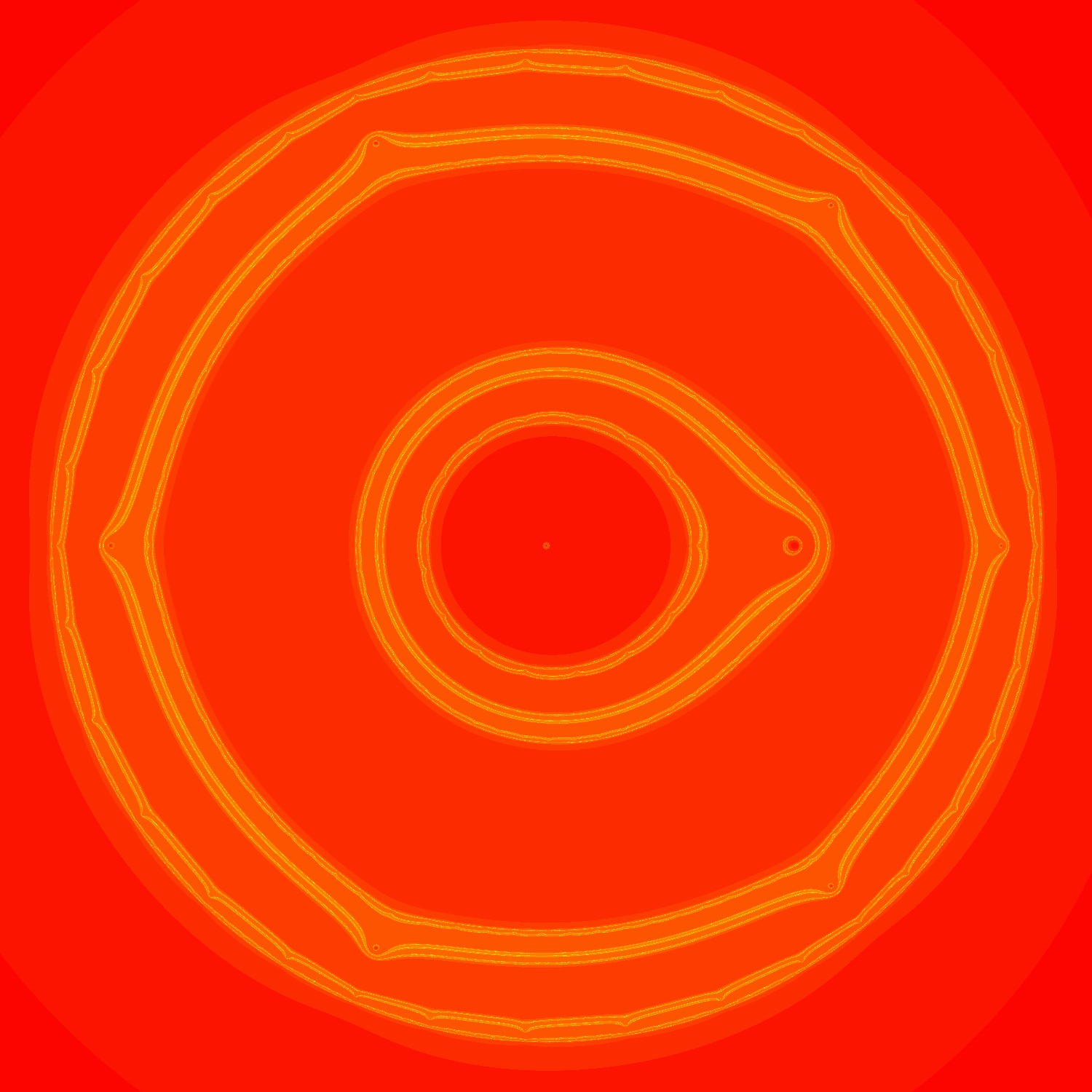};
			\end{axis}
		\end{tikzpicture}
		
	}
	
	\caption{Dynamical planes of the family $B_{n,d,a,\la}(z)$ for $d=3$. The top-left figure corresponds to $n=2$ and $\lambda=2\cdot10^{-8}$; the top-right corresponds to $n=3$ and $\lambda=-5\cdot10^{-8}$; the bottom-left figure corresponds to $n=4$ and $\lambda=-6.3\cdot10^{-9}$; and bottom-right corresponds to $n=5$ and $\lambda=-1.2\cdot10^{-10}$. In all cases we can see the triply connected regions (where the critical point $\nu_\lambda$ lies) and  their eventual preimages, which are Fatou components with increasing connectivity.}
	\label{fig:blaschke} 
\end{figure}

We now turn to the unperturbed family $\jn$. Inspired by the work in \cite{Can1,Can2} we will impose the following conditions to be satisfied for the maps in $\jn$.

\begin{enumerate}[label=(\alph*)]

\item{The point $z=0$ is a superattracting fixed point of degree $n$ of $\jn$. In particular $Q(0) \ne 0$.}
\item{The fixed point $z=\infty$ is (super)attracting. In particular the coefficient of $z^n$ of $Q$, say $b_n$, satisfies $0\leq |b_n|<1$.}
\item{There are exactly two Fatou components: the immediate basins of attraction $\mathcal{A}^*(0)$ and $\mathcal{A}^*(\infty)$ of $z=0$ and $z=\infty$, respectively.}

\end{enumerate}

\begin{remark}
We can deduce the following observations from the above conditions. Since the maps $\jn$ have degree $n+1$, the immediate basins of attraction are mapped onto themselves with degree $n+1$ and, hence, each of them contains exactly $n$ critical points counting multiplicity. In particular, the basin of attraction of $z=0$ (which is a critical point of multiplicity $n-1$) contains a simple critical point $\nu_0\ne 0$. 
\end{remark}

Once the unperturbed family has been described, we now consider the singular perturbation 
\begin{equation}
\jl(z)=\jn(z)+ \frac{\lambda}{z^d}, \quad \la \in \mathbb{C}^{*}, \ d\geq 2.
\end{equation} 
Notice that to simplify notation we do not specify the dependence on $a$ and $Q$ of the family $\jl$. Notice also that the family $\jl$ includes the family $B_{n,d,a,\la}$.  It follows immediately that all maps  $\jl$ have degree $n+d+1$ and that for $\lambda \ne 0$ the point $z=0$ is a pole of degree $d$. 
We will say that {\it $\jl$ satisfies (a), (b), and (c)} if $\jn$ satisfies the conditions (a), (b), and (c) explained above. Analogously to the condition needed to obtain a Cantor set of quasicircles for McMullen's family (see \cite{DLU}), we have to add a fourth condition to the family: 

\begin{enumerate}
\item[(d)]{The numbers $n,d\geq2$ are such that $\frac{1}{n}+\frac{1}{d}<1$. In other words, we exclude $n=d=2$.}
\end{enumerate}

 Since the critical points are not tied by any relation, for $|\lambda|$ big the dynamics can be very rich. Despite that, if $|\lambda|$ is small the family is essentially unicritical. Indeed, there exists a constant $\mathcal{C}>0$ such that if $|\lambda|<\mathcal{C}$, $\lambda\neq 0$,  the following hold (see \thref{munnum}):
\begin{itemize}
	\item The continuous extensions of the $n$ critical points which belong to the immediate basin of attraction $\mathcal{A}_0^*(\infty)$ of $z=\infty$ before perturbation belong to the immediate basin of attraction $\mathcal{A}_{\lambda}^*(\infty)$ of $z=\infty$ after perturbation. Moreover, $\mathcal{A}_{\lambda}^*(\infty)$ is a quasidisk.
	\item The pole $z=0$ belongs to a quasidisk $T_{\lambda}$ (usually called {\it trap door}) which is mapped onto $\mathcal{A}_{\lambda}^*(\infty)$ under $\jl$.
	\item The $n+d$ critical points which appear around $z=0$ after perturbation belong to a doubly connected Fatou component $A_{\lambda}$ which is mapped onto $T_{\la}$ under $\jl$.
\end{itemize}

The previous points actually coincide with the skeleton of the dynamics in the Cantor set of quasicircles case of McMullen's family (see \cite{McM1}). This is why the dynamical planes for this perturbed family resemble the dynamical planes for the Cantor set of quasicircles with extra decorations (see Figure \ref{fig:blaschke} and Figure \ref{fig:milnor}). These decorations come from the presence of the extra critical point $\nu_{\la}$, which comes from the continuous extension of the critical point $\nu_0$ that belongs to the basin of attraction of $z=0$ before perturbation. This is the only critical point which may not belong to the basin of attraction $\mathcal{A}_{\lambda}(\infty)$ of $z=\infty$ after perturbation if $|\lambda|$ is small. We want to remark that  the main difference between $\jl$ and the particular family $B_{n,d,a,\la}$ is that we allow certain degrees of freedom in the $n$ critical points that lie in  $\mathcal{A}^*_{\lambda}(\infty)$. For instance, if the degree of $Q$ is 0, then $z=\infty$ is a superatracting fixed point of local degree $n$. On the other hand,  if the degree of $Q$ is $n$, then $z=\infty$ is attracting (but not superattracting) and there are $n$ critical points which move in $\mathcal{A}^*_{\lambda}(\infty)$. Also, the shape of the Julia set before perturbation affects the shape of the Julia set of the perturbed map (see Figure \ref{fig:blaschke} and Figure \ref{fig:milnor}). Recall that in the Blaschke case the unperturbed Julia set is the unit circle.

\begin{figure}
	\centering
	{
		\begin{tikzpicture}
			\begin{axis}[width=0.49\textwidth, axis equal image, scale only axis,  enlargelimits=false, axis on top]
				\addplot graphics[xmin=-1,xmax=1.6,ymin=-1.1,ymax=1.5] {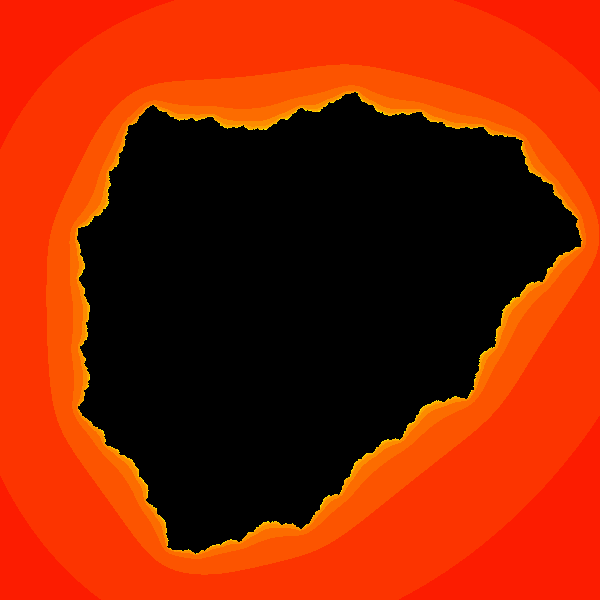};
			\end{axis}
	\end{tikzpicture}}
	\hfill
	{\begin{tikzpicture}
			\begin{axis}[width=0.49\textwidth, axis equal image, scale only axis,  enlargelimits=false, axis on top]
				\addplot graphics[xmin=-1,xmax=1.6,ymin=-1.1,ymax=1.5] {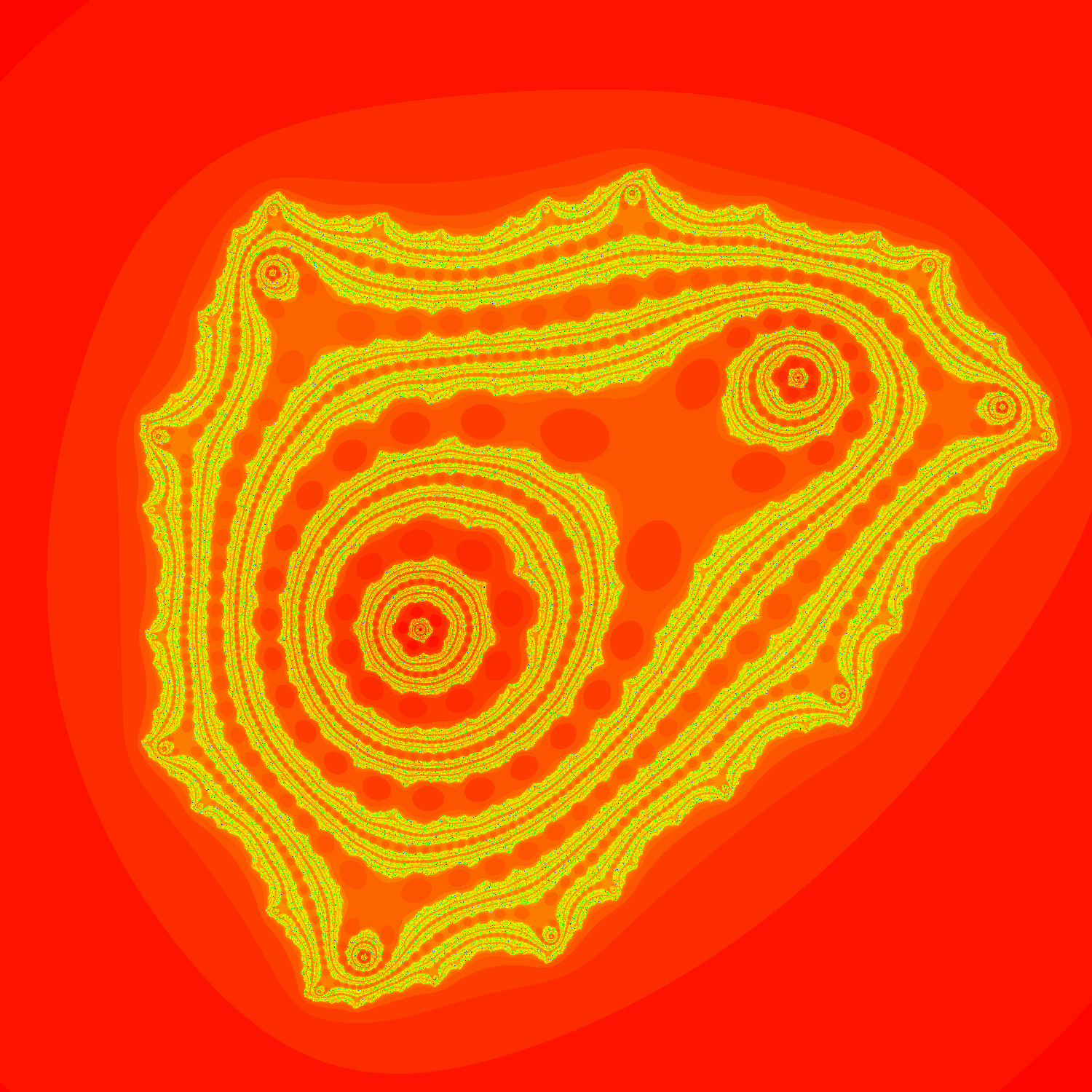};
			\end{axis}
	\end{tikzpicture}}
	\caption{Left figure illustrates the dynamical planes of $M_{n,a}$ for $n=2$ and $a=(0.9+0.6i)$. Right picture illustrates the dynamical plane of the (perturbed) family $\jl$ when the unperturbed map is precisely $M_{2,a}$, and the pertubation corresponds to $d=3$ and $\la=-10^{-7}$. We can see in the right figure the triply connected Fatou component which contains $\nu_{\la}$ and its  eventual preimages with higher connectivity.}
	\label{fig:milnor}
\end{figure}

The goal of the paper is to analyse the connectivities which can be achieved with these singular perturbations. The critical point $\nu_{\la}$ is crucial in order  to increase the connectivities beyond 2. 
Indeed, if $\nu_{\lambda}$ belongs to a preimage $U_{\nu}$ of $A_{\lambda}$ then the Fatou component $U_{\nu}$ is triply connected. 
Moreover, if $U_{\nu}$ surrounds $z=0$ then we can find sequences of iterated preimages of $U_{\nu}$ which increase the connectivity with every iteration. 
The next theorems describe the connectivities which can be achieved with this process. 
We denote by  $\textrm{Bdd}(\ala)$ the union of the connected component of the complement of $\ala$ not containing $z=\infty$ and the annulus itself.

\begin{thmA}
Let $\jl$ satisfying (a), (b), (c), and (d) and let $\lambda\neq 0$, $\modulu<\sefu$. Assume also that $\cl \in \uc$, where $\uc$ is an iterated preimage of $\ala$ which surrounds $z=0$. Let $k$ be the minimal number of iterations needed by the free critical point $\cl$ to be mapped into $\textrm{Bdd}(\ala)$. Let $U$ be a Fatou component of connectivity $\kappa>2$. Then, there exist $i,j,\ell \in \mathbb{N}$ such that $\kappa=(n+1)^{i}d^j n^\ell+2$ and $\ell \le jk$.
\end{thmA}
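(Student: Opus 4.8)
The plan is to run the Riemann--Hurwitz formula along the backward orbit of $U$ and to read off the connectivity from the degrees encountered. For a Fatou component $V\subset\rs$ of connectivity $\kappa_V$ write $\widetilde\kappa_V:=\kappa_V-2$, so that if $\jl\colon V\to W$ is proper of degree $\delta$ and contains $r$ critical points counted with multiplicity then $\widetilde\kappa_V=\delta\,\widetilde\kappa_W+r$. By the description of the dynamics in \thref{munnum}, the only critical point that can raise connectivity above $2$ is $\cl$: the remaining critical points sit in $\bl$ or in $\ala$, and the points $0,\infty$ are a pole and a (super)attracting fixed point. I would first show that every component $U$ with $\kappa>2$ is an iterated preimage of $\uc$, producing a chain $\uc=U_0,U_1,\dots,U_m=U$ with $\jl(U_{i+1})=U_i$ in which no $U_{i+1}$ contains a critical point (the critical point $\cl$ already having been spent inside $U_0=\uc$, and the $n+d$ critical points of $\ala$ lying strictly below $\uc$ in the forward dynamics). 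Each pull-back step is then unramified, so Riemann--Hurwitz degenerates to the purely multiplicative rule $\widetilde\kappa_{U_{i+1}}=\delta_{i+1}\,\widetilde\kappa_{U_i}$; since $\uc$ is triply connected, $\widetilde\kappa_{U_0}=1$ and hence $\widetilde\kappa_U=\prod_{i}\delta_i$. The theorem thus reduces to identifying the possible step-degrees $\delta_i$.

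The core is a pull-back dichotomy for components surrounding $z=0$, which I would extract from the self-similar (McMullen-type) structure recalled before the theorem. If $W$ surrounds $z=0$, then $\jl^{-1}(W)$ has exactly two components surrounding $z=0$, every other preimage being a degree-one copy that does not surround $z=0$ and hence leaves $\widetilde\kappa$ unchanged. One distinguished preimage is \emph{inner}, abutting a copy of the trap door $\tl$, and is mapped with degree $d=\deg(\jl|_{\tl})$; the other is \emph{outer}, abutting a copy of the basin $\bl$, and is mapped with the degree of $\jl$ on that copy, which is $n+1=\deg(\jl|_{\bl})$ unless the copy also encloses the extra zero of $\jl$ produced by the factor $z-a$ (the feature absent from the symmetric model of \cite{McM1}), in which case that zero splits off as its own degree-one component and the outer degree drops to $n$. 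Consequently every step-degree lies in $\{n+1,d,n\}$, and writing $i,j,\ell\in\mathbb N$ for the number of steps of each type gives $\widetilde\kappa_U=(n+1)^i d^j n^\ell$, that is $\kappa=(n+1)^i d^j n^\ell+2$.

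It remains to prove the constraint $\ell\le jk$, which I expect to be the main difficulty. The degree-$n$ outer steps are exactly those at which the splitting of the $(z-a)$-zero occurs, and this is dictated by the itinerary of the free critical orbit: such a splitting can be sustained only while $\cl$ has not yet entered $\textrm{Bdd}(\ala)$, an excursion that lasts precisely $k$ iterations. I would organize the chain $U_0,\dots,U_m$ around its $j$ degree-$d$ steps (the descents through trap-door copies) and argue that between two consecutive descents the backward orbit shadows at most one such critical excursion, hence accommodates at most $k$ consecutive degree-$n$ steps; summing over the excursions yields $\ell\le jk$. The delicate points to be nailed down are (i) that the backward orbit indeed stays among components surrounding $z=0$ and never re-encounters $\cl$, so that the multiplicativity of $\widetilde\kappa$ is exact, and (ii) the precise dictionary between the degree-$n$ steps and the $k$-step excursion of $\cl$, which is what turns the heuristic count into the rigorous bound $\ell\le jk$.
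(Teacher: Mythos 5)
Your skeleton---multiplicative Riemann--Hurwitz along a backward orbit of $\uc$, the reduction of any component with $\kappa>2$ to an iterated preimage of $\uc$, and the classification of the step degrees in $\{n+1,\,n,\,d,\,1\}$ according to whether the pulled-back component surrounds $z=0$ and where the zero $w_\la$ (equivalently, the critical value $\jl(\cl)$) sits---is the same as the paper's (\thref{nusurrconstant}, \thref{bambampocpoc}, \thref{prop:preimcurves}, \thref{chichicea}), and that part of your argument is sound: your dichotomy ``outer degree $n+1$ versus $n$ according to whether the outer copy encloses the $(z-a)$-zero'' is equivalent to the paper's dichotomy $W\subset\mathcal{V}_{n+1}$ versus $W\subset\mathcal{V}_n$. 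Your worry (i) is also easily dispatched exactly as you suggest, via \thref{nusurrconstant} and the fact that $\uc$ is not periodic.

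The genuine gap is the inequality $\ell\le jk$, which is precisely what you defer as ``delicate point (ii)'', and it is the actual content of Theorem A beyond \thref{bambampocpoc}. Your temporal heuristic---a degree-$n$ splitting ``can be sustained only while $\cl$ has not yet entered $\textrm{Bdd}(\ala)$''---conflates a spatial condition with a clock attached to the critical orbit: whether a pullback step has degree $n$ depends only on whether the component being pulled back lies in $\mathcal{V}_n$, i.e.\ inside $\jl(\uc)$, and nothing in your sketch links the length of a run of such steps to $k$. The paper closes this with two ingredients absent from your proposal. First, \thref{haiininterior} (a consequence of the Blaschke conjugacy of \thref{prop:conjblas} on $A^{\textrm {out}}$): every component surrounding $z=0$ contained in $\mathcal{U}_n\cup\mathcal{U}_{n+1}$ surrounds its image, so forward orbits are nested inward; in particular a component in $\mathcal{U}_n$ can never exit to $\mathcal{U}_{n+1}$ and must eventually enter $\mathcal{U}_d$. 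Second, \thref{maxitaxi}\textit{(i)}: the region swept by such runs is tiled by the nested annuli $B_1,\dots,B_k$ bounded by the consecutive images $W_i=\jl^i(\uc)$, with $\jl(B_i)=B_{i+1}$, and a case analysis ($W_k=\ala$ versus $W_k\subset\mathcal{U}_d$, where $B_k$ splits as $B_k^{\textrm {out}}\cup\overline{\ala}\cup B_k^{\textrm {in}}$) shows that any iterated preimage of $\uc$ surrounding $z=0$ lying in $\mathcal{U}_n$ sits in some $B_i$ and hence falls into $\mathcal{U}_d$ after at most $k$ iterations, all spent in $\mathcal{U}_n$. Consequently every maximal run of degree-$n$ steps in your chain has length at most $k$ and is followed, in forward time, by a degree-$d$ step; since $\uc\not\subset\mathcal{U}_n\cup\mathcal{U}_d$, no run can terminate at $U_0=\uc$, and summing over runs gives $\ell\le jk$. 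Without the nesting property and the annuli $B_i$, your count remains heuristic: it does not exclude a component of $\mathcal{U}_n$ whose forward orbit lingers in $\mathcal{U}_n$ for more than $k$ steps, which is exactly what the theorem must rule out.
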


In other words, Theorem A is telling us all {\it potential} connectivities 
$\kappa>2$ for a Fatou component of a map in $\jl$  for  
$|\lambda|$ sufficiently small; but it is not claiming the existence of a Fatou component of each $(i,j,\ell)$-connectivity. The next result complements Theorem A and it gives the connectivities that are certainly achieved for any parameter $\lambda$ as long as $|\lambda|$ is sufficiently small and $\nu_{\la}$ satisfies certain dynamical conditions. 

\begin{thmB}
Let $\jl$ satisfying (a), (b), (c), and (d) and let $\lambda\neq 0$, $\modulu<\sefu$. Assume also that $\cl \in \uc$, where $\uc$ is an iterated preimage of $\ala$ which surrounds $z=0$. Let $k\ge 1$ be the minimal number of iterations needed by the free critical point $\cl$ to be mapped into $\textrm{Bdd}(\ala)$. For any given $i,j,\ell \in \mathbb{N}$ such that $\ell \le j(k-1)$, there exists a Fatou component $U$ of connectivity $\kappa=(n+1)^{i}d^j n^\ell+2$.
\end{thmB}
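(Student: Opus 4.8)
The plan is to realize each admissible connectivity by an explicit iterated preimage of $\uc$ that surrounds $z=0$ and carries no critical point other than the one already forced by the construction, so that the Riemann--Hurwitz formula turns the inequality of Theorem A into an equality. Writing $\kappa(\cdot)$ for the connectivity of a Fatou component, recall that if $g\colon U\to V$ is proper of degree $m$ and $\delta$ is the total branching of $g$ inside $U$, then
\begin{equation*}
\kappa(U)-2=m\big(\kappa(V)-2\big)+\delta .
\end{equation*}
Since $\uc$ is triply connected --- the critical point $\cl$ it contains yields $\kappa(\uc)-2=1$ --- a component $U$ mapping onto $\uc$ after finitely many iterates, all of whose intermediate images surround $z=0$ and contain no critical point, satisfies $\kappa(U)-2=\prod_t m_t$, where the $m_t$ are the degrees of the successive pullbacks. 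Thus it suffices to produce, for prescribed $i,j,\ell$ with $\ell\le j(k-1)$, a backward orbit of $0$-surrounding components whose pullback degrees are $n+1$ exactly $i$ times, $d$ exactly $j$ times and $n$ exactly $\ell$ times.

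First I would isolate the three elementary pullback moves and check that each is available. Writing $\jl(z)=\big(z^{n+d}(z-a)+\la Q(z)\big)/\big(z^{d}Q(z)\big)$ and applying the argument principle to a round circle $\{|z|=r\}$ surrounding $z=0$, the winding number of $\jl$ about $0$ equals the number of enclosed zeros minus poles of $\jl$. For $\modulu<\sefu$ this winding is $d$ when $r$ is so small that the circle lies inside $\tl$ (only the pole of order $d$ at $0$ is enclosed); it equals $n$ at the intermediate scale $\modulu^{1/(n+d)}\ll r\ll|a|$ (the $n+d$ small zeros of the numerator together with the pole give $n+d-d=n$); and it equals $n+1$ at the outer scale $r\gtrsim|a|$, near $\partial\bl$, where the zero close to $a$ is also enclosed. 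Hence a $0$-surrounding component admits $0$-surrounding preimages of degree $d$ through the trap-door region, of degree $n$ through the intermediate region where $\jl$ behaves like $z\mapsto cz^{n}$, and of degree $n+1$ through the outer quasicircle region; when the relevant preimage avoids the critical set one has $\delta=0$ and $\kappa-2$ is multiplied exactly by the corresponding factor.

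Next I would assemble these moves. The degree-$(n+1)$ move is unconstrained: descending $i$ times through the outer region multiplies $\kappa-2$ by $(n+1)^{i}$ with no obstruction. The degree-$n$ moves, however, live in the intermediate annular region, which also contains the forward orbit $\cl,\jl(\cl),\dots,\jl^{\,k-1}(\cl)$ before it enters $\textrm{Bdd}(\ala)$. This orbit organizes the region into a tower of $k$ nested $0$-surrounding components, and pulling back through it one may insert up to $k-1$ consecutive degree-$n$ steps while keeping the preimages disjoint from the critical orbit (so each remains a genuine $0$-surrounding component with $\delta=0$); a $k$-th step would force the preimage to meet the critical value and is only conditionally available --- precisely the difference between Theorem A ($\ell\le jk$) and the present statement. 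Performing one degree-$d$ move to enter the tower and then at most $k-1$ degree-$n$ moves, repeated $j$ times and interleaved with the $i$ degree-$(n+1)$ moves (the product being order-independent), yields a $0$-surrounding preimage $U$ of $\uc$ with $\kappa(U)-2=(n+1)^{i}d^{j}n^{\ell}$ for any $\ell\le j(k-1)$.

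The hard part will be the bookkeeping guaranteeing that the constructed components are honest Fatou components surrounding $z=0$ with no spurious branching. Two points need care: showing that at each stage the chosen preimage is the one that genuinely separates $0$ from $\infty$ (rather than a simply connected preimage contributing no factor), and controlling the positions of $\jl(\cl)$ and its iterates relative to the nested components, so that exactly $k-1$ degree-$n$ insertions per descent avoid the critical orbit. Both reduce to the quantitative description of the dynamical plane for $\modulu<\sefu$ supplied by \thref{munnum}, together with the fact that outside a disk of radius $\sim\modulu^{1/(n+d)}$ the map $\jl$ is a small perturbation of $\jn$, whose behaviour near the superattracting fixed point $z=0$ (local degree $n$) and near $\partial\bl$ (degree $n+1$) furnishes the two non-pole factors.
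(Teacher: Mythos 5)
Your overall strategy coincides with the paper's: pull $\uc$ back along $0$-surrounding Fatou components, multiply $\kappa-2$ by the pullback degree at each unbranched step (Riemann--Hurwitz), and assemble $i$ degree-$(n+1)$ steps with $j$ blocks consisting of one degree-$d$ step followed by up to $k-1$ degree-$n$ steps. The gap is at the pivotal point of that assembly: Theorem B needs a \emph{lower} bound, namely that after \emph{every} degree-$d$ pullback at least $k-1$ consecutive degree-$n$ pullbacks are actually available, and your proposal only asserts this (``one may insert up to $k-1$ consecutive degree-$n$ steps''), with a justification whose mechanism is wrong. The constructed preimages are Fatou components, so they are automatically disjoint from the components $W_r=\jl^r(\uc)$ carrying the critical orbit, and they never ``meet the critical value''; that is not what limits the number of degree-$n$ steps. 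By \thref{prop:preimcurves} and \thref{chichicea}, what governs the dichotomy is whether the component \emph{surrounds} the critical value $\jl(\cl)$: a $0$-surrounding component admits a degree-$n$ preimage in $\mathcal{U}_n$ precisely when it lies in $\mathcal{V}_n$ (i.e.\ inside $\jl(\uc)$, so that $\jl(\cl)$ is exterior to it), whereas a component in $\mathcal{V}_{n+1}$ has a single preimage in $A^{\textrm{out}}$, of degree $n+1$. This also shows that your scale-based winding-number heuristic cannot be the right criterion: the annulus $B_1$ bounded by $\overline{\jl(\uc)}$ and $\overline{\uc}$ sits in the ``intermediate'' region $\mathcal{U}_n$, yet its $0$-surrounding components admit \emph{no} degree-$n$ pullback at all, since they surround $W_1\ni\jl(\cl)$; taken literally, the heuristic would produce unboundedly many factors of $n$ and contradict the very constraint $\ell\le j(k-1)$ (and Theorem A's $\ell\le jk$).

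What still has to be proven is exactly the content of the paper's \thref{maxitaxi}\textit{(ii)}. Setting $W_r=\jl^r(\uc)$ and letting $B_{r+1}$ be the annulus bounded by $\overline{W_r}$ and $\overline{W_{r+1}}$, one shows $\jl(B_r)=B_{r+1}$ and decomposes $\mathcal{U}_d$ as $B_k^{\textrm{in}}\cup\overline{W_k}\cup\jl(B_k^{\textrm{out}})$ when $W_k\subset\mathcal{U}_d$ (respectively $\mathcal{U}_d=\jl(B_k)$ when $W_k=\ala$); then a case analysis on where the degree-$d$ preimage lands shows it always pulls back through at least $B_{k-1},\dots,B_1$, each step having degree $n$ because $B_{r}\subset\mathcal{V}_n$ for $r\ge 2$, the chain terminating at $B_1\subset\mathcal{V}_{n+1}$ where only degree-$(n+1)$ and degree-$d$ moves remain. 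Without this decomposition and case analysis, your ``tower'' picture gives the upper bound of Theorem A but not the guaranteed $k-1$ steps per block that Theorem B requires. A smaller caution of the same nature: the moves cannot be interleaved in arbitrary order (degree-$(n+1)$ pullbacks exist only from $\mathcal{V}_{n+1}$, degree-$n$ ones only from $\mathcal{V}_n$), so the construction must follow an admissible itinerary --- e.g.\ the paper's: all $(n+1)$-steps first, then blocks ending in $B_1$, from which a new degree-$d$ step is legal --- even though the resulting product is of course independent of the order.
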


In Theorem A  and Theorem B the achievable connectivities depend on the minimal number of iterations $k>0$ needed by the free critical point $\cl$ to be mapped into $\textrm{Bdd}(\ala)$. However, choosing $\lambda$ appropriately we can make this $k$ as big as desired. Therefore, for any $\ell$ and $j$ we can find $\lambda$ so that the inequality $\ell \le j(k-1)$ is satisfied. From this, we obtain Theorem C.

\begin{thmC}
Let $\jl$ satisfying (a), (b), (c), and (d) and let $\lambda\neq 0$, $\modulu<\sefu$. For any given $i,l \ge 0$ and $j>0$, there exists a parameter $\la$ such that $\jl(z)$ has a Fatou component of connectivity $\kappa=(n+1)^{i}d^j n^\ell+2$, and a Fatou component of connectivity $\kappa=(n+1)^i+2$.
\end{thmC}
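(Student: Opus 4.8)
The plan is to deduce Theorem C from Theorem B, the only extra ingredient being that the integer $k$ occurring there can be forced to be as large as we wish by choosing $\modulu$ small. First I would record the combinatorial reduction. Fix $i,\ell\ge 0$ and $j>0$; since $j\ge 1$ we have $j(k-1)\ge k-1$, so the constraint $\ell\le j(k-1)$ of Theorem B is satisfied as soon as $k\ge \ell+1$. Moreover, the second component demanded by the statement, of connectivity $(n+1)^i+2$, is exactly the instance $j=\ell=0$ of the expression $(n+1)^i d^j n^\ell+2$, and for this triple the inequality $\ell\le j(k-1)$ reads $0\le 0$ and is automatic. Hence a single parameter $\lambda$ meeting the hypotheses of Theorem B with $k\ge \ell+1$ produces, via the triple $(i,j,\ell)$, a Fatou component of connectivity $(n+1)^i d^j n^\ell+2$ and, via the triple $(i,0,0)$, a Fatou component of connectivity $(n+1)^i+2$. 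Thus Theorem C reduces to exhibiting, for arbitrarily large $k$, a parameter $\lambda$ with $0<\modulu<\sefu$ for which $\cl\in\uc$, an iterated preimage of $\ala$ surrounding $z=0$, with minimal transit time $k$ of $\cl$ into $\textrm{Bdd}(\ala)$.

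The mechanism I would exploit to enlarge $k$ is to let $\modulu\to 0$. By \thref{munnum}, as $\lambda\to 0$ the trap door $\tl$ and the annulus $\ala$ collapse toward the pole $z=0$, whereas the free critical point $\cl$, being the continuous extension of $\nu_0$, converges to $\nu_0\neq 0$ and so remains at a definite distance from $z=0$. If $\cl$ stays captured in the nest of $0$-surrounding iterated preimages of $\ala$, write $p$ for the depth of the component $\uc\ni\cl$, i.e.\ $\jl^{\,p}(\uc)=\ala$. The intermediate preimages $\uc,\jl(\uc),\dots,\jl^{\,p-1}(\uc)$ are $0$-surrounding annuli lying outside $\textrm{Bdd}(\ala)$, so the orbit of $\cl$ first meets $\textrm{Bdd}(\ala)$ precisely at step $p$, giving $k=p$. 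Since $\ala$ shrinks to $z=0$ while $\cl$ does not, the number $p$ of preimages separating them is forced to grow without bound as $\lambda\to 0$, providing arbitrarily large $k$.

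The hard part, which I would treat with the most care, is ensuring that $\cl$ genuinely lands in an iterated preimage of $\ala$ that surrounds $z=0$ --- rather than in a non-surrounding Fatou component or in the Julia set --- for a nonempty set of parameters at each prescribed large depth. I would establish this by a parameter-space argument along the lines of \cite{Can2}: relying on the holomorphic, hence continuous, dependence of $\cl$ on $\lambda$ together with the control of $\tl$ and $\ala$ granted by \thref{munnum}, one shows that the set of parameters for which $\cl$ occupies a $0$-surrounding preimage of $\ala$ of given depth is nonempty for every sufficiently large depth, for instance through a connectedness or winding argument on a curve of parameters accumulating at $\lambda=0$. Granting this, for any target $(i,j,\ell)$ with $j>0$ I select such a $\lambda$ with $p=k\ge \ell+1$ and invoke the reduction of the first paragraph, which yields both required Fatou components and completes the proof.
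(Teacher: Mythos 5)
Your combinatorial reduction is sound and is exactly how the paper argues: with $j\ge 1$ the constraint of Theorem B becomes $k\ge \ell+1$, and the component of connectivity $(n+1)^i+2$ comes for free (the paper invokes \thref{chichicea1}\textit{(i)}; your use of the triple $(i,0,0)$ in Theorem B amounts to the same thing). The mechanism you name for making $k$ large --- $\ala$ collapses to the origin as $\la\to 0$ while $\cl$ stays near $\nu_0\neq 0$ --- is also the one the paper exploits. The problem is that everything then hinges on the statement you introduce with ``Granting this'': that for every sufficiently large depth $m$ there exists a parameter for which $\cl$ lies in the $0$-surrounding iterated preimage of $\ala$ of depth $m$. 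That statement is precisely \thref{lem5point2}, and proving it (together with the construction of the nest $\{A_{m,\la}\}_{m\geq 1}$ in \thref{alambdas}) is essentially the entire content of Section 5 of the paper; a proof of Theorem C that assumes it has deferred the crux rather than supplied it.

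Concretely, here is why the ``connectedness or winding argument'' cannot be waved through. The natural plan is an intermediate-value argument in $\la$: at a fixed $\la_0$ the critical point lies in $\textrm{Bdd}(A_{m,\la_0})$ (capture at some finite level $m_0$ gives this for all $m\ge m_0$), while for $|\la|$ small it lies in the unbounded component of $\mathbb{C}^*\setminus A_{m,\la}$ --- the paper proves this second endpoint using that $\jl$ converges to $\jn$ uniformly away from $z=0$ and that the orbit of $\nu_0$ under $\jn$ accumulates on, but never lands on, $z=0$. To conclude that some intermediate parameter places $\cl$ \emph{inside} $A_{m,\la}$, one needs the two boundary curves of $A_{m,\la}$ surrounding $z=0$ to move continuously with $\la$; but continuity can fail exactly at the parameters one must pass through, namely those at which the free critical point (equivalently, its forward orbit) meets one of these boundaries. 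The paper resolves this with the case analysis in the proof of \thref{lem5point2}: using \thref{prop:preimcurves} and the Blaschke conjugacy of \thref{prop:conjblas}, it classifies the local configurations when $\jl(\cl)$ crosses $\partial^{\textrm{Int}} A_{m-1,\la}$ or $\partial^{\textrm{Ext}} A_{m-1,\la}$, and verifies that in every case the transition from ``inside'' to ``outside'' forces intermediate parameters with $\cl\in A_{m,\la}$. Note also that your conditional observation (``if $\cl$ stays captured, the depth grows'') is not what is needed and is never established: what Theorem C requires --- and what the paper proves --- is unconditional capture at a prescribed depth $m>\ell$.
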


The paper is organised as follows. In Section~\ref{sec:preliminaries} we briefly introduce the tools later used in the paper. In Section \ref{sec:perturbed} we describe in detail the skeleton of the dynamical plane of $\jl$ satisfying the conditions (a), (b), (c) and (d) for $|\la|$ small enough. In Section \ref{sec:Theorem_AB} we prove Theorems A and B. Finally, in Section \ref{sec:Theorem_C} we prove Theorem C.

\section{Preliminaries}\label{sec:preliminaries}

In this section we present the main tools that we use along the paper. Before, we introduce some notation. In the introduction we used the notation $\textrm{Bdd}(A)$ to denote the set bounded by an annulus $A$, including itself. It will be useful to generalise this concept for other multiply connected sets. Let $U\subset \mathbb{C}$ be a multiply connected open set. We denote by $\textrm{Bdd}(U)$ the minimal simply connected open set which contains $U$ but not $z=\infty$. Let $\gamma\in\mathbb C$ be a Jordan curve. We denote by ${\rm Ext}\left(\gamma\right)$ and ${\rm Int}\left(\gamma\right)$ the connected components of $\hat{\mathbb C} \setminus \gamma$ that contain $z=\infty$ and do not contain $z=\infty$, respectively.
We denote by $A ( \gamma_1, \, \gamma_2)$ the open annulus bounded by Jordan curves $\gamma_1$ and $\gamma_2$ with $\gamma_1 \subset \textrm {Int}\left(\gamma_2\right)$. We denote the circle centered at the origin and of radius $c>0$ by $\mathbb{S}_c$. Finally, if $U \subset \mathbb C$ we denote by $\overline{U}$ its closure.

We now proceed to introduce the needed tools. The next result provides a sufficient criterion for the Julia set of a map  to be a quasicircle.

\begin{theorem}{ \cite[Theorem 2.1, page 102]{CaGa} }
\thlabel{CarGam}If the Fatou set of a rational map $R$ contains exactly two Fatou components and the map $R$ is hyperbolic on its corresponding Julia set $J(R)$, then $J(R)$ is a quasicircle.

\end{theorem}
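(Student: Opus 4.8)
The cited result is a classical statement about expanding maps, so the plan is to argue through the bounded-turning (three-point) characterisation of quasicircles due to Ahlfors, after first extracting the expanding structure from hyperbolicity.

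\textbf{Setup and reductions.} First I would record that, since $\mathcal{F}(R)$ has exactly two components $U_0,U_1$, their common boundary is all of $J(R)$ and $R$ permutes the pair; replacing $R$ by $R^2$, which leaves $J(R)$ and hyperbolicity unchanged, I may assume $R(U_0)=U_0$ and $R(U_1)=U_1$. Hyperbolicity excludes Siegel disks, Herman rings and parabolic basins, so each $U_i$ is the immediate---hence the entire---basin of a (super)attracting fixed point $p_i$. Applying Riemann--Hurwitz to the proper self-maps $R|_{U_i}\colon U_i\to U_i$, together with Böttcher/Kœnigs linearisation near $p_i$, I would deduce that each $U_i$ is simply connected. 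Finally, hyperbolicity is equivalent to expansion: there is a conformal metric $\sigma$ on a neighbourhood of $J(R)$ with $\|R'\|_\sigma\ge\kappa>1$ on $J(R)$, and the postcritical set lies at positive distance from $J(R)$, so every inverse branch of $R^n$ defined near $J(R)$ is univalent with Koebe-bounded distortion.

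\textbf{$J(R)$ is a Jordan curve.} Using the expanding metric I would pull back a fixed finite cover of $J(R)$ by small connected pieces to produce nested partitions whose mesh decays geometrically; bounded distortion keeps the pieces uniformly round, which yields local connectivity of $J(R)$. A compact, connected, locally connected set that separates $\hat{\mathbb{C}}$ into exactly two disjoint simply connected domains and is their common boundary is a Jordan curve; equivalently, by Carathéodory each $U_i$ is a Jordan domain because its boundary is locally connected and has no cut points (the latter forced by the two-sided structure).

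\textbf{Bounded turning (the crux).} To finish I would verify Ahlfors' three-point condition. Given $z_1,z_2\in J(R)$ close together, let $A$ be the arc of $J(R)$ of smaller diameter joining them. Expansion lets me choose the least $m$ with $\mathrm{dist}(R^m(z_1),R^m(z_2))\ge\delta_0$ for a fixed scale $\delta_0$; a compactness argument at this unit scale shows $\mathrm{diam}(R^m(A))\lesssim |R^m(z_1)-R^m(z_2)|$. Pulling $R^m(A)$ back to $A$ along the inverse branch of $R^m$ through $z_1$---univalent with uniformly bounded distortion because it avoids the postcritical set---transfers this estimate down to $\mathrm{diam}(A)\le C\,|z_1-z_2|$ with $C$ independent of $z_1,z_2$. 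This is exactly the bounded-turning condition, so $J(R)$ is a quasicircle.

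\textbf{Main obstacle.} The delicate point is the pullback: one must guarantee that the selected inverse branch of $R^m$ is single-valued and has distortion bounded uniformly in $m$ and in the chosen points. This is precisely where hyperbolicity is indispensable---the positive distance from the postcritical set to $J(R)$ supplies definite-size domains on which Koebe's theorem applies at every scale---and where the combinatorics of which arc of $J(R)$ actually joins $R^m(z_1)$ to $R^m(z_2)$ must be matched carefully with the inverse branch running back down to $z_1,z_2$.
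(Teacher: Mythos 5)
First, a point of order: the paper does not prove this statement at all. It is imported verbatim, with citation, from Carleson--Gamelin \cite[Theorem 2.1, p.~102]{CaGa}, and its only role in the paper is to yield \thref{cor:quasicircle}. So there is no proof by the authors to compare yours against; the natural benchmark is the classical argument, and your outline does follow its architecture (two completely invariant, simply connected basins; local connectivity from expansion, hence $J(R)$ is a Jordan curve; Ahlfors' bounded-turning criterion via expansion plus Koebe distortion of inverse branches).

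Measured against that, your bounded-turning step has a genuine gap, sitting exactly where you flag the ``main obstacle'', and the flag does not resolve it. You stop at the least $m$ with $\mathrm{dist}(R^m(z_1),R^m(z_2))\ge\delta_0$. But separation of the endpoints gives no control on the arc: nothing prevents $\mathrm{diam}(R^j(A))$ from becoming comparable to $\mathrm{diam}(J(R))$ at some $j\le m$ while the endpoints stay close --- ruling this out would require precisely the statement being proved (arcs with close endpoints have small diameter), so the step is circular as written. Consequently your inequality $\mathrm{diam}(R^m(A))\lesssim|R^m(z_1)-R^m(z_2)|$ holds only with the useless constant $\mathrm{diam}(J(R))/\delta_0$, and, worse, the pull-back collapses: the inverse branch of $R^m$ through $z_1$ is univalent with uniform Koebe distortion only on a disk of \emph{definite} radius about $R^m(z_1)$, while $R^m(A)$ need not fit inside any such disk. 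No uniform-distortion branch exists near a long arc: the postcritical-free neighbourhood of $J(R)$ is an annulus, not simply connected; on a thin simply connected neighbourhood of a long arc, Koebe gives no uniform bound; and the telescoping distortion estimate for $(R^m)'$ along $A$ fails because the intermediate images $R^j(A)$ are not small.

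The repair --- and this is how the classical proof runs --- is to stop according to the arc rather than the endpoints: let $m$ be the first time $\mathrm{diam}(R^m(A))\ge\delta_0$. Then $\delta_0\le\mathrm{diam}(R^m(A))\le L\,\delta_0$ with $L=\sup|R'|$ near $J(R)$; every intermediate arc is small, so $R^m|_A$ is injective and the branch of $R^{-m}$ through $z_1$ is defined, univalent, and Koebe-bounded on a disk containing all of $R^m(A)$, mapping it back onto $A$. What then replaces your compactness claim is a lower bound $|R^m(z_1)-R^m(z_2)|\ge\epsilon(\delta_0)$, which follows from two facts: the complementary arc shares the endpoints of $R^m(A)$, so its diameter is at least $\mathrm{diam}(J(R))-L\delta_0\ge\delta_0$ for $\delta_0$ small, i.e.\ \emph{both} arcs joining $R^m(z_1)$ to $R^m(z_2)$ have diameter $\ge\delta_0$; and on a Jordan curve, a compactness argument with a parametrization $\phi:\mathbb{S}^1\to J(R)$ shows that two points, both of whose complementary arcs have diameter $\ge\delta_0$, are at distance $\ge\epsilon(\delta_0)>0$. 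This is the only place where being a Jordan curve, rather than a continuum, is used quantitatively; with this change of stopping rule your argument closes. (A minor additional soft spot: Riemann--Hurwitz as you invoke it presupposes finite connectivity of the $U_i$; the clean route to simple connectivity is that $\hat{\mathbb{C}}\setminus U_1=\overline{U_0}$ is connected.)
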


We can immediately conclude that the Julia sets of the maps $\jn$ are quasicircles.

\begin{corollary}\thlabel{cor:quasicircle}
\thlabel{2comp}Let $\jn$ satisfying (a), (b), and (c). Then, its Julia set is a quasicircle.

\end{corollary}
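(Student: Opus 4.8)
The plan is simply to verify the two hypotheses of \thref{CarGam} for the rational map $R=\jn$ and then invoke it directly. The first hypothesis is immediate: condition (c) states that the Fatou set of $\jn$ consists of exactly the two components $\mathcal{A}^*(0)$ and $\mathcal{A}^*(\infty)$. Thus the only real content to establish is that $\jn$ is hyperbolic on $\mathcal{J}(\jn)$, i.e.\ that every critical point is attracted to an attracting cycle or, equivalently, that the postcritical set is disjoint from the Julia set.

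To check hyperbolicity I would count critical points. As a rational map of degree $n+1$, $\jn$ has $2(n+1)-2=2n$ critical points counting multiplicity, and these are exactly the ones already located in the Remark following conditions (a)--(c). Indeed, conditions (a) and (b) make $z=0$ and $z=\infty$ into (super)attracting fixed points, and since by (c) the immediate basins $\mathcal{A}^*(0)$ and $\mathcal{A}^*(\infty)$ are the only Fatou components, each of them is completely invariant (the preimage of each is a union of Fatou components all mapping onto it, and the basin itself is the only candidate). Complete invariance forces $\jn$ to restrict to a proper self-map of each basin of degree $n+1$, so Riemann--Hurwitz yields exactly $n$ critical points counting multiplicity in each, and the two basins together account for all $2n$ of them.

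Consequently there are no free critical points: every critical point lies in $\mathcal{A}^*(0)\cup\mathcal{A}^*(\infty)$ and is therefore attracted to one of the (super)attracting fixed points $z=0$ or $z=\infty$. Hence the postcritical set accumulates only on $\{0,\infty\}\subset\mathcal{F}(\jn)$ and never meets $\mathcal{J}(\jn)$, which is precisely the statement that $\jn$ is hyperbolic on its Julia set. With both hypotheses verified, \thref{CarGam} gives that $\mathcal{J}(\jn)$ is a quasicircle. The step requiring the most care is the complete invariance of the two basins and the resulting degree-$(n+1)$ self-covering that pins down the count of $n$ critical points per basin; once that is granted, hyperbolicity and the application of \thref{CarGam} follow at once.
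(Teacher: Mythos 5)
Your proposal is correct and takes essentially the same route as the paper: the paper deduces this corollary directly from \thref{CarGam}, with condition (c) supplying the two Fatou components and hyperbolicity coming from the fact (asserted in the paper's Remark) that the completely invariant basins of $z=0$ and $z=\infty$ absorb all $2n$ critical points, so no critical orbit meets the Julia set. The only step you leave implicit is that each basin is simply connected (needed for your Riemann--Hurwitz count of $n$ critical points per basin); this follows at once because the complement of each basin is the closure of the other, hence connected -- a point the paper itself also glosses over.
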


Finally we recall the Riemann-Hurwitz formula (see for instance \cite{Ste}), which we use in order to study connectivities of Fatou components. 

\begin{theorem}{(Riemann-Hurwitz formula)}
\thlabel{RH}Let $U,V \subset \hat{\mathbb{C}}$ be two connected domains of connectivity $m_U, m_V \in \mathbb{N}^{*}$ and let $f: U \to V$ be a degree $d$ proper map branched over $r$ critical points, counted with multiplicity. Then, 
$$m_U-2=d(m_V-2)+r .$$

\end{theorem}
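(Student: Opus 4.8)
The plan is to recast the statement as the classical Riemann--Hurwitz relation between Euler characteristics and then to translate it back into the language of connectivities. The first step is to observe that a connected domain $W\subset\hat{\mathbb C}$ of connectivity $m$ has the homotopy type of a sphere with $m$ disjoint open disks removed, so that its Euler characteristic satisfies $\chi(W)=2-m$. This is consistent with the two basic cases: a simply connected domain ($m=1$) is contractible and has $\chi=1$, while an annulus ($m=2$) is homotopy equivalent to a circle and has $\chi=0$. Applying this to both domains gives $\chi(U)=2-m_U$ and $\chi(V)=2-m_V$.

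The core of the argument is to establish the covering identity $\chi(U)=d\,\chi(V)-r$. Since $f\colon U\to V$ is a proper holomorphic map of degree $d$, it is a branched covering whose branch locus consists precisely of the $r$ critical points counted with multiplicity, a critical point of local degree $k$ contributing $k-1$. I would prove the identity by a triangulation count: choose a finite triangulation of $V$ in which every critical value appears as a vertex, and pull it back through $f$ to a triangulation of $U$. Over the interior of each face and each edge the map $f$ is an unbranched $d$-to-$1$ cover, so if $V$ has $F$ faces and $E$ edges then $U$ acquires $d\cdot F$ faces and $d\cdot E$ edges. A vertex $y$ of $V$ has $d-\sum_{x\in f^{-1}(y)}(e_x-1)$ preimages, where $e_x$ denotes the local degree at $x$; summing over all vertices gives $d\cdot\#\{\text{vertices of }V\}-r$ vertices in $U$, since every critical point lies over a critical value and hence over a vertex. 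Collecting the three counts yields $\chi(U)=d\,\chi(V)-r$.

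Substituting $\chi(U)=2-m_U$ and $\chi(V)=2-m_V$ into $\chi(U)=d\,\chi(V)-r$ gives $2-m_U=d(2-m_V)-r$, which rearranges at once to $m_U-2=d(m_V-2)+r$, as claimed.

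The step I expect to be the main obstacle is the treatment of the boundary, because $U$ and $V$ are open and hence noncompact, so the triangulation count is not literally an Euler-characteristic computation on a closed surface. The cleanest remedy is to pass to the closures $\overline U$ and $\overline V$, which in the situation of this paper are compact bordered surfaces since their boundaries consist of finitely many Jordan curves (quasicircles, by \thref{CarGam}). Properness of $f$ guarantees that $f(\partial U)\subset\partial V$ and that the extension $\overline U\to\overline V$ is a branched covering of compact bordered surfaces, for which $\chi(\overline W)=2-m$ continues to hold; one must additionally verify that no extra ramification is contributed along the boundary, which is the case because the critical points of a rational map lie in the Fatou set and hence in the interior. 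With this compactification in place the triangulation argument applies verbatim.
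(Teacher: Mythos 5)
First, a point of comparison: the paper does not prove this statement at all --- it is quoted as a classical result with a pointer to \cite{Ste} --- so your proposal can only be measured against the standard argument. Your overall strategy is exactly that standard argument: convert connectivity to Euler characteristic via $\chi(W)=2-m$ (correct, since a finitely connected domain in $\hat{\mathbb{C}}$ is homotopy equivalent to a sphere with $m$ disks removed), establish $\chi(U)=d\,\chi(V)-r$ by pulling back a triangulation, and substitute. The combinatorial count is right, including the vertex count $d\cdot\#\{\text{vertices of }V\}-r$, which uses $\sum_{x\in f^{-1}(y)}e_x=d$ for every $y$.

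The genuine gap is in your proposed remedy for noncompactness. Passing to the closures $\overline{U}$ and $\overline{V}$ does not work under the hypotheses of the theorem: a domain of finite connectivity need not have boundary consisting of Jordan curves (boundary components can be points or arbitrary continua, e.g.\ fractal sets), and a proper holomorphic map $f\colon U\to V$ need not extend continuously to $\overline{U}$ --- such an extension requires local connectivity of the boundary, which is not assumed; your appeal to quasicircle boundaries imports hypotheses from the paper's applications that are not in the statement. Moreover, your supporting claim that ``the critical points of a rational map lie in the Fatou set'' is false in general (critical points can lie in the Julia set), and in any case the theorem concerns an abstract proper map between domains, not the restriction of a rational map. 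The correct repair stays in the interior rather than going to the closure: choose a compact, smoothly bounded subsurface $V'\subset V$ containing all critical values and forming a core of $V$, i.e.\ such that each component of $V\setminus V'$ is a half-open annulus; set $U'=f^{-1}(V')$. Since $\partial V'$ avoids the critical values, $U'$ is a compact bordered surface with smooth boundary, $f\colon U'\to V'$ is a degree $d$ branched cover with the same $r$ critical points, and each component of $U\setminus U'$ is an unbranched proper cover of a half-open annulus, hence itself a half-open annulus; thus $U'$ and $V'$ are deformation retracts of $U$ and $V$, so $\chi(U')=\chi(U)$ and $\chi(V')=\chi(V)$. Your triangulation count then applies verbatim to $f\colon U'\to V'$, and the rest of your argument goes through unchanged.
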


Along the text we also use the following corollary of the Riemann-Hurwitz formula (compare \cite[Corollary 2.2]{CFG1}).
\begin{corollary}
\thlabel{corocoro}
Let $U\subset \hat{\mathbb U}$ be an open set and let $f: U \to f(U)$ be a proper holomorphic map. Then, the following statements hold:
\begin{enumerate}[label=(\roman*)]

\item{If $f(U)$ is doubly connected and $f$ has no critical points in $U$, then $U$ is doubly connected.}
\item{If $f(U)$ is simply connected and $f$ has at most one critical point in $U$ (not counting multiplicities), then $U$ is simply connected. }

\end{enumerate}
\end{corollary}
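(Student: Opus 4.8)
The plan is to read off both statements directly from the Riemann--Hurwitz formula (\thref{RH}) applied to the proper holomorphic map $f\colon U\to V$, where I set $V:=f(U)$ and denote by $d\ge 1$ the degree of $f$ and by $r$ its total branching (the number of critical points counted with multiplicity). Since connectivity is only defined for connected sets, and since in the applications of interest $U$ is connected, I take $U$ to be a domain, so that its connectivity $m_U$ is a positive integer; likewise $V$ is connected with connectivity $m_V$. The whole argument rests on two elementary facts: $m_U\ge 1$, and the observation that the local degree $\deg_c f$ of $f$ at any single point $c$ is at most the global degree $d$, so a single critical point contributes at most $d-1$ to $r$.

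For part (i), I substitute $m_V=2$ (the set $V$ is doubly connected) and $r=0$ (there are no critical points in $U$) into \thref{RH}. This gives
$$
m_U-2=d\,(2-2)+0=0,
$$
so $m_U=2$ for every degree $d$, i.e.\ $U$ is doubly connected.

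For part (ii), I substitute $m_V=1$, so that \thref{RH} reads $m_U-2=-d+r$, equivalently $r=m_U-2+d$. Now I squeeze $r$ from both sides. On the one hand, $m_U\ge 1$ forces $r\ge d-1$. On the other hand, the hypothesis that $f$ has at most one critical point in $U$ means that $r$ equals the multiplicity $\deg_c f-1$ of that single point (or $r=0$ if there is none); since $\deg_c f\le d$, this gives $r\le d-1$. Combining the two bounds yields $r=d-1$, hence $m_U=1$, i.e.\ $U$ is simply connected.

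The only genuinely delicate point is the two-sided estimate on $r$ in part (ii): one has to recognise that the positivity $m_U\ge 1$ already forces the lower bound $r\ge d-1$, while the restriction to a \emph{single} critical point (irrespective of its multiplicity) forces the matching upper bound $r\le d-1$ through $\deg_c f\le d$. Part (i) is then immediate. The remaining care is simply to ensure that $f$ is proper onto the connected set $V$ and that $U$ is a domain, as is the case in all our applications, so that \thref{RH} is available.
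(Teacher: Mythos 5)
Your proof is correct and takes essentially the same route as the paper, which states this corollary without a written proof as a direct consequence of the Riemann--Hurwitz formula (\thref{RH}), pointing to \cite[Corollary 2.2]{CFG1}; your computation is exactly that standard derivation. In particular, the two-sided squeeze on $r$ in part (ii) — the lower bound $r\ge d-1$ coming from $m_U\ge 1$ and the upper bound $r\le d-1$ coming from $\deg_c f\le d$ for the single critical point — correctly handles a critical point of arbitrary multiplicity, which is the only place where care is needed.
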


\section{The perturbed family}\label{sec:perturbed}

Let $\jl$ satisfying conditions (a), (b), and (c) described above. This section describes the main properties of the dynamical plane for parameters belonging to a neighbourhood of $\la=0$. We first describe the immediate basin of attraction of $z=\infty$, which we further denote by $\bl$, and its boundary. The proof of \thref{prop:continfty} uses the theory of polynomial-like mappings introduced by Douady and Hubbard in \cite{DH1} (see also \cite{BF}).
 
\begin{remark}
Along the paper, when we say that a compact set moves continuously with respect to parameters, we use the topology induced by the Hausdorff metric for compact sets.
\end{remark}

\begin{prop}
\thlabel{prop:continfty}
Let  $\jl$ satisfying conditions (a), (b), and (c). Then, for $\modulu$ small enough, the following hold:
\begin{enumerate}[label=(\roman*)]

\item{The Fatou component $\bl$ is mapped onto itself with degree $n+1$.}
\item{The boundary of $\bl$ is a quasicircle that moves continuously with respect to $\la$.}
\item{The set $\bl$ contains exactly $n$ critical points counting multiplicity. Each critical point of $\jl$ in $\bl$ is a continuous extension of a critical point of $\jn$ in $\mathcal{A}^{*}(\infty)$.}
\end{enumerate}
\end{prop}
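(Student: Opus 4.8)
The plan is to represent the dynamics of $\jl$ on a neighbourhood of $\overline{\bl}$ as a polynomial-like map of degree $n+1$ and to follow this structure under perturbation from $\la=0$, where $\jl=\jn$. First I would work at $\la=0$. By \thref{2comp} the Julia set $\mathcal{J}(\jn)=\partial\mathcal{A}^{*}(\infty)$ is a quasicircle, so $\overline{\mathcal{A}^{*}(\infty)}$ is a quasidisk; by condition (c) the component $\mathcal{A}^{*}(\infty)$ is completely invariant and $\jn\colon\mathcal{A}^{*}(\infty)\to\mathcal{A}^{*}(\infty)$ is proper of degree $n+1$. Since the only Fatou components of $\jn$ are its two attracting basins, $\jn$ is hyperbolic and hence expanding near $\mathcal{J}(\jn)$. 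Using this expansion I would choose a topological disk $V$ with real-analytic boundary, containing $\overline{\mathcal{A}^{*}(\infty)}$ together with a thin collar in $\mathcal{A}^{*}(0)$ and with $0\notin\overline{V}$, so that the component $U$ of $\jn^{-1}(V)$ around $\infty$ satisfies $\overline{U}\subset V$. By complete invariance $\mathcal{A}^{*}(\infty)\subset U$, and $(\jn;U,V)$ is polynomial-like of degree $n+1$ whose filled Julia set is exactly $\overline{\mathcal{A}^{*}(\infty)}$: orbits in $\mathcal{A}^{*}(\infty)$ converge to the fixed point $\infty\in U$, while orbits starting in the collar are pushed towards $z=0$ and leave $U$. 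Taking the collar thin, $U$ contains precisely the $n$ critical points of $\jn$ in $\mathcal{A}^{*}(\infty)$.

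Next I would perturb. Since $0\notin\overline{V}$ and $\la/z^{d}\to 0$ as $z\to\infty$, the maps $\jl$ converge to $\jn$ uniformly on $\overline{V}$ and $z=\infty$ remains a fixed point with unchanged (super)attracting multiplier. Hence, by the argument principle --- equivalently, by regarding $\{(\jl;U_{\la},V)\}$ as a holomorphic family of polynomial-like maps in the sense of \cite{DH1,BF} --- for $\modulu$ small the component $U_{\la}$ of $\jl^{-1}(V)$ around $\infty$ still satisfies $\overline{U_{\la}}\subset V$, the triple $(\jl;U_{\la},V)$ is polynomial-like of degree $n+1$, and its filled Julia set $\mathcal{K}_{\la}$ moves continuously with $\la$. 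Although $\jl$ has global degree $n+d+1$, the extra preimage sheets and the new critical points created by the perturbation cluster near the pole $z=0$, hence outside $U_{\la}$ for $\modulu$ small. By \thref{RH} applied to $(\jl;U_{\la},V)$ there are exactly $n$ critical points in $U_{\la}$ (counting multiplicity), and by continuity in $\la$ these are the continuous extensions of the critical points of $\jn$ in $\mathcal{A}^{*}(\infty)$; a standard perturbation argument (finitely many iterates carry them into a fixed attracting neighbourhood of $\infty$) shows their orbits still converge to $\infty$. Thus all $n$ critical points lie in the interior of $\mathcal{K}_{\la}$, which is therefore the immediate basin $\bl$, proving (iii). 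Counting the $n+1$ preimages inside $U_{\la}$ of a point of $\bl$, all of which lie in $\bl$, gives (i).

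It remains to establish (ii). Having all its critical points in the interior, the polynomial-like map $(\jl;U_{\la},V)$ is hyperbolic with connected $\mathcal{K}_{\la}$ whose interior is the single basin $\bl$. By the Straightening Theorem \cite{DH1} it is hybrid equivalent, through a quasiconformal map $\varphi_{\la}$, to a polynomial $P_{\la}$ of degree $n+1$; the conjugacy sends the $n$ interior critical points to the critical points of $P_{\la}$, which therefore all lie in the immediate basin of a finite attracting fixed point. Then $P_{\la}$ has exactly two Fatou components and is hyperbolic on $J(P_{\la})$, so \thref{CarGam} gives that $J(P_{\la})$ is a quasicircle; as $\varphi_{\la}$ is quasiconformal, $\partial\bl=\varphi_{\la}^{-1}(J(P_{\la}))$ is a quasicircle as well. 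Its continuous dependence on $\la$ follows from the continuous (indeed holomorphic) motion of $\mathcal{K}_{\la}$ in this hyperbolic family.

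The step I expect to be the main obstacle is the rigorous verification that $\overline{U_{\la}}\subset V$ with the proper degree persisting equal to $n+1$: this rests on quantifying the expansion of $\jn$ near $\mathcal{J}(\jn)$ at $\la=0$, on the uniform smallness of $\la/z^{d}$ on $\overline{V}$ (guaranteed by $0\notin\overline{V}$), and on checking that none of the extra preimages or new critical points created near $z=0$ enters $U_{\la}$.
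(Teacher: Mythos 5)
Your proposal is correct and rests on the same core strategy as the paper: realize $\jl$ near $\overline{\mathcal{A}^{*}(\infty)}$ as a degree $n+1$ polynomial-like map, check the structure persists for $\modulu$ small within an analytic family in the sense of \cite{DH1,BF}, count the $n$ critical points via \thref{RH}, and deduce the quasicircle boundary. You differ in two sub-steps, which is worth recording. First, the domain construction: where you invoke hyperbolic expansion near $\mathcal{J}(\jn)$ to obtain $\overline{U}\subset V$ with a thin collar, the paper instead takes the maximal B\"ottcher domain $U$ of $z=0$, a curve $\gamma\subset U\setminus\overline{\jn(U)}$, and an annulus/Riemann--Hurwitz argument (via \thref{corocoro}) showing that $\gamma$ has a \emph{unique} preimage curve $\gamma_0^{-1}$, mapped with degree $n+1$ --- this exploits the invariant B\"ottcher annuli and so never needs to quantify expansion; both routes produce the same polynomial-like triple. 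Second, and more substantively, part \textit{(ii)}: the paper applies \thref{CarGam} only once, at $\la=0$ (through \thref{2comp}), and then transports the quasicircle to all small $\la$ via $\mathcal{J}$-stability of the analytic family and the resulting holomorphic motion (\cite[Proposition 10]{DH1}), obtaining the quasicircle property and the continuous dependence in a single stroke; you instead straighten each $(\jl;U_{\la},V)$ to a polynomial $P_{\la}$, apply \thref{CarGam} to $P_{\la}$, and pull back by the (globally extended) hybrid conjugacy. Your route gives the quasicircle parameter-by-parameter without any motion argument, which is a genuine alternative, but note that your final continuity claim (``follows from the continuous motion of $\mathcal{K}_{\la}$'') is exactly the $\mathcal{J}$-stability/holomorphic-motion fact the paper cites, so that ingredient cannot actually be dispensed with --- it is the thinnest point of your write-up. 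One further caveat applies equally to both texts: that the continuously extended critical points remain in the \emph{immediate} basin $\bl$ (needed in your argument so that all critical points of $P_{\la}$ lie in the immediate basin of the finite attracting fixed point, forcing exactly two Fatou components) is asserted rather than proved in the paper as well, so this is a shared gloss rather than a gap specific to your proposal.
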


\begin{proof}
Observe that the three statements are trivially satisfied (by definition and Corollary \ref{2comp}) for the unperturbed family. So this proposition says that this conditions are still true if the perturbation is small enough. To prove the proposition we  show the existence of an analytic family of polynomial-like maps which ensures the continuous deformation of the key dynamical objects.

Fix $\jn$ and let $U$ be the maximal domain of Bottcher coordinates around $z=0$. The critical point $\nu_0$ lies on $\partial U \subset \mathcal{A}^{*}(0)$. 
Let $\gamma$ be an analytic Jordan curve surrounding the origin such that $\gamma \in U \setminus \overline{\jn(U)}$. We now show that the preimage of $\gamma$ has a unique connected component. Let $A:=A(\gamma ,\, \partial \mathcal{A}^{*}( 0 ))$. Notice that $A\subset \mathcal{A}^{*}(0)$. Observe that the annulus $A$ does not include any critical value. By \thref{corocoro}(i), its preimage $A^{-1}$  (under $\jn$) is also an annulus. Since $\partial \mathcal{A}^{*}( 0 )$ is mapped by $\jn$ onto itself with degree $n+1$, $A^{-1}$ is also mapped onto $A$ with degree $n+1$. Let $\gamma_0^{-1}$ be the connected component of $ \partial A^{-1}$ other than $\partial \mathcal{A}^{*}( 0 )$. Since $A^{-1}$ is mapped onto $A$ with degree $n+1$ under $\jn$, then $\gamma_0^{-1}$ is mapped onto $\gamma$ with degree $n+1$ under $\jn$. Since $\jn$ has (global) degree $n+1$, we conclude that there is no other preimage of $\gamma$ than $\gamma_0^{-1}$ under the map $\jn$.

Let $\mathcal{V}=\textrm{Ext}(\gamma)$ and $\mathcal{U}_{0}=\textrm{Ext}(\gamma_{0}^{-1})$. It follows from the construction that $\mathcal{U}_{0}$ is compactly contained in $\mathcal{V}$ and $\jn|_{\mathcal{U}_{0}}:\mathcal{U}_{0}\mapsto \mathcal{V}$ is a proper map of degree $n+1$. Therefore, the triple $(\jn|_{\mathcal{U}_{0}},\mathcal{U}_{0}, \mathcal{V})$ is a degree $n+1$ polynomial-like map (see \cite{DH1}, see also \cite{BF}). We want to extend (for $|\lambda|$ small enough) this map to  a $\mathcal{J}$-stable analytic family of polynomial like mappings. Observe that the map $\jl$ depends analytically on $\lambda$ for all $z\in\widehat{\mathbb{C}}\setminus\mathbb{D}_{\varepsilon}$, where $\mathbb{D}_{\varepsilon}$ denotes the disk of radius $\varepsilon$ centered at $z=0$, for all $\lambda\in\mathbb{C}$ and all $\varepsilon>0$. Recall that $\jn=\mathcal{S}_{n,d,0}$. Therefore, if $|\lambda|$ is small enough, the continuous extensions of the $n$ critical points (counting multiplicity) which lie in $\mathcal{A}^{*}(\infty )=\mathcal{A}_0^{*}( \infty )$ for $\jn$ lie in  $\mathcal{A}_{\lambda}^{*}( \infty )$ for $\jl$.  Moreover, if $|\lambda|$ is small enough then there exists a unique connected component $\gamma_{\la}^{-1}$ of $\gamma$ under the map $\jl^{-1}$ which is an analytic Jordan curve. In fact, $\gamma_\lambda^{-1}$ is a continuous deformation of $\gamma_{0}^{-1}$ and it is mapped with degree $n+1$ onto $\gamma$ under $\jl$. Let $\mathcal{U}_{\la}=\textrm{Ext}(\gamma_{\la}^{-1})$. Decreasing $|\lambda|$ if necessary, we can ensure the following. The set $\mathcal{U}_{\la}$ is compactly contained in $\mathcal{V}$ and the only critical points of $\jl$ in $\mathcal{U}_{\la}$ are the ones which come from the continuous extension of the critical points in $\mathcal{A}^{*}(\infty )$. Moreover, $\jl|_{\mathcal{U}_{\la}}:\mathcal{U}_{\la}\mapsto \mathcal{V}$ is a proper map of degree $n+1$ and the triple $(\jl|_{\mathcal{U}_{\la}},\mathcal{U}_{\la}, \mathcal{V})$ is a degree $n+1$ polynomial-like mapping.

Let $\Lambda$ be an open disk centered  at $\la=0$ compactly contained in the open set of parameters for which the previous conditions hold. Then, $\{(\jl|_{\mathcal{U}_{\la}},\mathcal{U}_{\la}, \mathcal{V})\}_{\la\in\Lambda}$ defines an analytic family of polynomial like mappings (see \cite{DH1}, see also \cite{BF}). Let $\mathcal{K}_{\lambda}:=\{z\in\mathcal{U}_{\la}\;|\; \jl^n(z)\in\mathcal{U}_{\la} \mbox{ for all } n\geq0\}$ and $\mathcal{J}_\la=\partial \mathcal{K}_\la$ denote the filled Julia set and the Julia set of the polynomial like map $(\jl,\mathcal{U}_{\la}, \mathcal{V})$, respectively. Notice that $\mathcal{K}_{0}= \overline{\mathcal{A}^*(\infty)}$. Notice also that all connected components of the interior of $\mathcal{K}_\la$ are Fatou components of $\jl$. Therefore, since the point $z=\infty$ belongs to $\mathcal{K}_\la$ for all $\la\in\Lambda$, we conclude that $\mathcal{A}_{\la}^*(\infty)\subset \mathcal{K}_\la$. 

To finish the proof, we observe that since all critical points of $\jl|_{\mathcal{U}_{\la}}$ belong to $\mathcal{A}_{\la}^*(\infty)$ it follows that the analytic family of $\{(\jl,\mathcal{U}_{\la}, \mathcal{V})\}_{\la\in\Lambda}$ is $\mathcal{J}$-stable. In particular, the Julia sets $\mathcal{J}_\la$ are quasicircles which are continuous deformations of $\mathcal{J}_0=\partial \mathcal{A}^*(\infty)$ (see \cite[Proposition 10]{DH1}). Notice that, by  \thref{2comp}, $\partial \mathcal{A}^*(\infty)$ is a quasicircle. Since $\mathcal{A}_\la^*(\infty)\subset \mathcal{K}_\la$, we can conclude that $\partial \mathcal{A}_\la^*(\infty)=\mathcal{J}_\la$ for all $\la\in\Lambda$. This proves (ii). Statements (i) and (iii) follow from the choice of the set of parameters $\Lambda$.

\end{proof}

The first part of the following lemma describes a neighbourhood of $z=\infty$ which, for $\modulu$ small enough, always lies in the interior of $\bl$. The second part shows that $z=0$ lies in a preimage of $\bl$, different from it.

\begin{lemma}
\thlabel{Klemma}

Let  $\jl$ satisfying conditions (a), (b), and (c). Then, for $\modulu$ small enough, the following happen:
\begin{enumerate}[label=(\roman*)]
\item{There exists a constant $K$, which only depends on $n,$ $a,$ and $Q$, such that $z\in\bl$ if $|z|>K$.}
\item{Assume that $\jl$ also satisfies condition (d). For any constant $K_1>0$, if $\modulu$ is small enough, the disk $\left \{ |z|<K_1 |\la|^{\frac{n+d}{nd} } \right \}$ belongs to a Fatou component $T_{\la}$. The Fatou component $\tl$ is mapped onto $\bl$ and it is different from $\bl$.}
\end{enumerate}
\end{lemma}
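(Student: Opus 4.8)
The plan is to treat the two statements separately, using the explicit form of the map near $z=\infty$ and near $z=0$ respectively, together with the polynomial-like structure established in \thref{prop:continfty}.

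For part (i), I would first recall that for the unperturbed map $\jn(z)=\frac{z^n(z-a)}{Q(z)}$, condition (b) guarantees that $z=\infty$ is (super)attracting, so the leading behaviour of $\jn$ near infinity is governed by $z^{n+1-\deg Q}$ with the coefficient $1/b_n$ having modulus $>1$ (or $z=\infty$ is a pole of order $\ge 1$ when $\deg Q<n$). In either case there is a fixed radius $K_0$, depending only on $n$, $a$, and $Q$, such that $|\jn(z)|>|z|$ and the orbit escapes to $\infty$ whenever $|z|>K_0$; concretely one chooses $K_0$ so that the exterior disk $\{|z|>K_0\}$ is mapped strictly inside itself, which follows from a direct estimate on the rational expression. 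The point to exploit is that the perturbation term $\lambda/z^d$ is uniformly small on $\{|z|>K_0\}$: for $|z|>K_0$ one has $|\lambda/z^d|\le |\lambda|/K_0^d$, so for $\modulu$ small enough the same inward-mapping estimate persists for $\jl$ with a constant $K$ only slightly larger than $K_0$, still depending only on $n,a,Q$ (since $K_0$ does, and the required enlargement can be absorbed). Thus $\{|z|>K\}$ lies in the escaping set, which coincides with $\bl$ by \thref{prop:continfty}(ii) since $\partial\bl$ is a quasicircle bounding the immediate basin. This gives (i).

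For part (ii) the strategy is to analyse $\jl$ near the pole at $z=0$. Here the dominant terms are $\frac{-a\,z^n}{Q(0)}+\frac{\lambda}{z^d}$, because $Q(0)\ne 0$ by (a) and the numerator $z^n(z-a)$ contributes $-a z^n/Q(0)$ to lowest order. The natural scaling is $z=|\lambda|^{\frac{1}{n+d}}\,w$ (up to the precise normalisation producing the exponent $\frac{n+d}{nd}$ in the statement for the image scale), which balances the two competing monomials $z^n$ and $\lambda/z^d$; this is exactly McMullen's scaling for the Cantor-set-of-quasicircles regime, and condition (d), $\frac1n+\frac1d<1$, is what makes the two scales separate so that on a suitable small disk $\jl$ behaves like $\lambda/z^d$ composed with the escaping dynamics. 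I would show that the image $\jl\!\left(\{|z|<K_1|\lambda|^{\frac{n+d}{nd}}\}\right)$ lies in $\{|z|>K\}\subset\bl$ by estimating $|\jl(z)|\approx |\lambda|/|z|^d$ from below on this disk, so that for $\modulu$ small every point of the disk escapes after one more iterate and hence lies in a Fatou component $\tl$ mapped onto $\bl$. That $\tl\ne\bl$ follows because $z=0\in\tl$ is a pole (so $\tl\subset\jl^{-1}(\bl)$ but $\jl$ does not fix $\tl$), while $\bl$ is invariant by \thref{prop:continfty}(i); alternatively, $\tl$ is bounded and separated from $\bl$ by the quasicircle $\partial\bl$ together with the intermediate annular dynamics.

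The main obstacle I anticipate is part (ii): making the heuristic balancing rigorous, i.e.\ pinning down that on the disk $\{|z|<K_1|\lambda|^{\frac{n+d}{nd}}\}$ the term $\lambda/z^d$ genuinely dominates both $z^n(z-a)/Q(z)$ and the perturbation of the remaining rational part uniformly, and that the resulting image clears the constant $K$ of part (i). This requires a careful two-sided estimate tracking how the exponents $n$, $d$ interact under the scaling, and it is precisely where condition (d) is indispensable: if $\frac1n+\frac1d\ge1$ the two regions would overlap and one could not simultaneously control the disk near $0$ and its image near $\infty$. Once these estimates are in place, the identification of $\tl$ as a genuine Fatou component and the non-coincidence $\tl\ne\bl$ are routine consequences of \thref{prop:continfty}.
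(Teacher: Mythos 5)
Your proposal is correct and takes essentially the same route as the paper: part (i) by persistence, under the uniformly small perturbation $\la/z^d$, of an exterior disk contained in the unperturbed basin of $\infty$, and part (ii) by the two-term estimate $|\jl(z)|\geq |\la|/|z|^d - |\jn(z)|$ on the small disk around the pole, where condition (d) forces the first term to blow up as $\la\to 0$ while the second tends to $0$, so the image clears the constant $K$ of part (i). One caution: your first justification of $\tl\neq\bl$ (that $z=0$ being a pole prevents $\jl$ from fixing $\tl$) does not stand on its own, since a pole can perfectly well lie in the immediate basin of $\infty$ (as in McMullen's Cantor-set regime for larger $\modulu$); your alternative argument --- that $\partial\bl$ is a quasicircle which, by continuity from the unperturbed map, still surrounds $z=0$, so $0\notin\bl$ --- is exactly the paper's reasoning and is the one to keep.
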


\begin{proof}
We begin with statement \textit{(i)}. From condition (b) we know that, for fixed $\jn$, there exists a constant $K$ such that the set $\{ z \in \mathbb{C}| \, |z|>K\}$ is compactly contained in the immediate basin of attraction of $\infty$. By continuity with respect to $\la$, for $\modulu$ small enough, this set is also contained in $\bl$.

For statement \textit{(ii)}, let $K_1>0$. Assume that $\la$ is such that \textit{(i)} is satisfied for the constant $K$ above. Let $z\in \mathbb C$ such that $|z|<K_1 |\la|^{\frac{n}{n+d}}$. It follows that $$|\jl(z)|>\left|\frac{\lambda}{z^d}\right|-\left|\frac{z^n(z-a)}{Q(z)}\right|>\frac{|\la|^{1-\frac{nd}{n+d}}}{K_1^d}- \frac{|\la|^{\frac{1}{n+d}}K_1^n(|a|+1)}{M}=:C_1 (\la)+C_2(\la).$$  Notice that $C_2(\la)$ tends to $0$ as $\la$ tends to $0$. Because of assumption (d), $C_1(\la)$ tends to $\infty$ as $\la$ tends to $0$. Shrinking $\modulu$ if necessary, if $|z|<|\la|^{\frac{n}{n+d}} K_1$, then $|\jl(z)|>K$. We conclude that the set $\left \{ |z|<K_1 |\la|^{\frac{n+d}{nd} } \right \}$ belongs to a Fatou component. This Fatou component contains $z=0$, which is mapped to $\infty$ with degree $d$. By continuity with respect to $\la$ and \thref{prop:continfty}, $\partial \bl$ is a quasicircle which surrounds $z=0$. It follows that $\bl$ does not contain the origin and $z=0$ belongs to a preimage of $\bl$, different from $\bl$, which we denote by $\tl$.

\end{proof}

Recall that each map of the perturbed family has global degree $n+d+1$. Hence, it has $2(n+d)$ critical points (counting multiplicity). By \thref{prop:continfty}, $n+d-1$ of them lie in $\bl \cup \{0\}$. By continuity with respect to $\lambda$, there is a (simple) critical point $\cl$ which is the continuous extension of the critical point $\nu_0$  of $\jn$ in $\mathcal{A}^{*}(0)$. Each map has $n+d+1$ zeros, one of which, say $w_\la$, corresponds to the continuous extension of $w_0=a$. We now give a description of the position of the remaining $n+d$ critical points and the $n+d$ preimages of $z=0$ for $\jl$. 

\begin{lemma}
\thlabel{samoaracaii}
Let  $\jl$ satisfying conditions (a), (b), and (c). Assume $\xi \in \mathbb{C}$ is a $(n+d)$th-root of unity, $\xi^{n+d}=1$. Then, for $\modulu$ small enough, there exist $n+d$ free critical points, $c_{\la, \xi}$, and $n+d$ zeros, $z_{\la, \xi}$, given by 
$$ c_{\la, \xi}=\xi  \left( \frac{d Q(0) }{ -na }\right)^{\frac{1}{n+d}}\la^{\frac{1}{n+d}}+o\left( \la^{\frac{1}{n+d}}\right),$$ $$z_{\la, \xi}=\xi \left( \frac{ Q(0)}{a}\right) ^{\frac{1}{n+d}}\la^{\frac{1}{n+d}}+o\left( \lambda^{\frac{1}{n+d}}\right) .$$
\end{lemma}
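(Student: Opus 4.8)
The plan is to locate both families of points by blowing up the cluster that collapses to $z=0$ as $\la\to 0$ and applying the implicit function theorem to the rescaled equations. Throughout I would write $P(z)=z^n(z-a)$, so that $\jn=P/Q$, and set $\mu:=\la^{1/(n+d)}$ for a fixed choice of branch, so that $\la=\mu^{n+d}$. The roots of the relevant algebraic equations split into a cluster near the origin shrinking at rate $\mu$ and a bounded part; only the cluster is of interest here.

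I would treat the zeros first, as they are simpler. Clearing denominators in $\jl(z)=0$ (multiplying by $z^dQ(z)$) turns the equation into the polynomial equation $z^{n+d}(z-a)+\la Q(z)=0$; since the points sought lie near $z=0$, where for small $\modulu$ neither $z=0$ nor any zero of $Q$ occurs, no spurious solutions are introduced. At $\la=0$ this reads $z^{n+d}(z-a)=0$, which has a root of multiplicity $n+d$ at the origin, so by continuity of roots exactly $n+d$ roots collapse to $0$ as $\la\to0$. To capture their rate and leading coefficient I substitute $z=\mu w$ and divide by $\mu^{n+d}$, obtaining $G(w,\mu):=w^{n+d}(\mu w-a)+Q(\mu w)=0$. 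This is analytic in $(w,\mu)$ and at $\mu=0$ becomes $G(w,0)=-a\,w^{n+d}+Q(0)$, whose roots $w=\xi\,(Q(0)/a)^{1/(n+d)}$ (with $\xi^{n+d}=1$) are simple because $a\neq0$ and, by condition (a), $Q(0)\neq0$. Since $\partial_w G(w_0,0)=-(n+d)a\,w_0^{n+d-1}\neq0$ at each such $w_0$, the implicit function theorem yields analytic branches $w_\xi(\mu)=\xi\,(Q(0)/a)^{1/(n+d)}+O(\mu)$; undoing the rescaling gives $z_{\la,\xi}=\mu\,w_\xi(\mu)=\xi\,(Q(0)/a)^{1/(n+d)}\la^{1/(n+d)}+o(\la^{1/(n+d)})$, as claimed, and as $\xi$ runs over the $(n+d)$-th roots of unity these exhaust the $n+d$ roots near the origin.

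The critical points are handled in the same way after one extra algebraic step. From $\jl'=(P'Q-PQ')/Q^2-d\la/z^{d+1}$, the equation $\jl'(z)=0$ is equivalent (after multiplying by $z^{d+1}Q^2$, again introducing no spurious roots near $0$) to $z^{d+1}(P'Q-PQ')-d\la Q^2=0$. A short computation shows that the lowest-order term of $P'Q-PQ'$ is $-na\,Q(0)z^{n-1}$, so $P'Q-PQ'=z^{n-1}R(z)$ for a polynomial $R$ with $R(0)=-na\,Q(0)\neq0$, and the equation becomes $z^{n+d}R(z)-d\la Q^2=0$. Rescaling $z=\mu w$ and dividing by $\mu^{n+d}$ gives $F(w,\mu):=w^{n+d}R(\mu w)-d\,Q(\mu w)^2=0$, with $F(w,0)=-na\,Q(0)w^{n+d}-d\,Q(0)^2$. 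Its roots $w=\xi\,(dQ(0)/(-na))^{1/(n+d)}$ are again simple, $\partial_w F\neq0$ there, and the implicit function theorem together with undoing the rescaling produces $c_{\la,\xi}=\xi\,(dQ(0)/(-na))^{1/(n+d)}\la^{1/(n+d)}+o(\la^{1/(n+d)})$.

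I expect the only genuinely delicate points to be bookkeeping rather than conceptual: one must check that clearing denominators creates no spurious solutions at the locations of interest (guaranteed because the limiting root $w_*\neq0$ keeps the cluster away from $z=0$ and from the zeros of $Q$), and one must track the branch of $\la^{1/(n+d)}$ so that multiplying the fixed value $w_*$ by the full set of $(n+d)$-th roots of unity $\xi$ recovers exactly the $n+d$ distinct roots of the original equation, independently of the branch chosen. Both are routine; the real ``work'' is the derivative computation and the factorization $P'Q-PQ'=z^{n-1}R(z)$ with $R(0)\neq0$, which is precisely where condition (a) and the hypothesis $a\neq0$ enter decisively.
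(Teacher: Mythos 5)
Your proof is correct, but it takes a genuinely different route from the paper's. The paper starts from the same two polynomial equations --- $z^{n+d}(z-a)=-\la Q(z)$ for the zeros, and the cleared-denominator form of $\jl'(z)=0$ for the critical points --- but instead of rescaling it rewrites each as a fixed-point equation $z=T_{\la,\xi}(z)$ with $T_{\la,\xi}(z)=\xi\left(Q(z)/(a-z)\right)^{1/(n+d)}\la^{1/(n+d)}$ (and an analogous operator $S_{\la,\xi}$ for the critical points); existence of the $n+d$ solutions bifurcating from $z=0$ is taken from continuity of roots, and the asymptotics follow from the mean-value estimate $\left| z_{\la,\xi}-T_{\la,\xi}(0)\right| \le \sup\left|T_{\la,\xi}'\right|\,\left|z_{\la,\xi}\right|$, whose right-hand side carries a factor $|\la|^{1/(n+d)}$ and so forces the error to be $o\left(\la^{1/(n+d)}\right)$. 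Your blow-up $z=\mu w$, $\mu=\la^{1/(n+d)}$, followed by the implicit function theorem applied to $G(w,\mu)$ and $F(w,\mu)$, delivers existence, simplicity, and the leading coefficient in one stroke, and in fact gives slightly more: the branches $w_\xi(\mu)$ are analytic in $\mu$, so you obtain genuine Puiseux expansions rather than first-order asymptotics with an $o(\cdot)$ error. It also confines all fractional powers to the single substitution $\mu=\la^{1/(n+d)}$, whereas the paper's operators involve a fractional power of a function of the unknown $z$, which requires branch care at each application. The core computation is the same in both arguments: your factorization $P'Q-PQ'=z^{n-1}R(z)$ with $R(0)=-naQ(0)\neq 0$ is exactly the content of the paper's explicit critical-point equation, and both proofs use $Q(0)\neq 0$ and $a\neq 0$ at precisely this spot. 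Your bookkeeping points --- no spurious roots introduced near $z=0$ when clearing denominators, branch-independence of the full solution set under $\xi\mapsto\zeta\xi$, and exhaustion of the cluster by root counting --- are the right ones and are handled correctly.
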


\begin{proof}

Let us start with the zeros. Notice that all the zeros of $\jl$ (except for $w_{\la}$) must converge to $z=0$ when $\la$ tends to $0$. The zeros of $\jl(z)$ are the solutions of $$z^{n+d} (z -a)=-\lambda Q(z).$$

Since $a$ is away from $z=0$, there are $n+d$ zeros bifurcating from $z=0$, for $\modulu$ small enough. They are the fixed points of  $n+d$ operators $$T_{\la, \xi}(z)=\xi\left( \frac{Q(z)}{a-z}\right)^{\frac{1}{n+d}}\lambda^{\frac{1}{n+d}}=R(z)\lambda^{\frac{1}{n+d}},$$ where $\xi^{n+d}=1$ are roots of the unity. Observe that in a sufficiently small neighbourhood of $z=0$, $R(z)$ is holomorphic and bounded (notice that $R(0)\ne 0$), so $T_{\la, \xi}(z)\to 0$ as $\lambda \to 0$. We can approximate  $z_{\la, \xi}$ by $T_{\la, \xi}(0)=\xi\left( \frac{Q(0)}{a}\right) ^{\frac{1}{n+d}} \la^{\frac{1}{n+d}}$. Indeed, $$\left| z_{\la, \xi}-T_{\la, \xi}(0)\right| = \left| T_{\la, \xi}(z_{\la, \xi})-T_{\la, \xi}(0)\right| \le \sup\limits_{\omega \in [0, z_{\la, \xi}]}|T_{\la, \xi}'(\omega) | |z_{\la, \xi}-0|=|\la|^{\frac{1}{n+d}}\sup\limits_{\omega \in [0, z_{\la, \xi}]}|R'(\omega)|  |z_{\la, \xi}|.$$ 
For $\modulu$ small enough, there is no pole of $R'$ in a neighbourhood of $z=0$ containing the line segment $ [0, z_{\la, \xi}]$, so it is bounded by a constant, say $K_2$. It follows that $$\left|\frac{z_{\la, \xi}-\xi \left(\frac{Q(0)}{a}\right)^{\frac{1}{n+d}}\la^{\frac{1}{n+d}}}{\la^{\frac{1}{n+d}}}\right| \le K_2 |z_{\la, \xi}|.$$
Finally, since $\lim\limits_{\lambda \to 0}K_2 \left| z_{\la, \xi}\right|=0$,  it follows that $$z_{\la, \xi}=\xi  \left( \frac{ Q(0)}{a}\right)^{\frac{1}{n+d}}\la^{\frac{1}{n+d}}+o\left( \lambda^{\frac{1}{n+d}}\right) .$$
It can be shown analogously that the $n+d$ free critical points are solutions of the equation $$ \frac{1}{Q^2(z)}\left[ (n+1)z^nQ(z)-z^{n+1}Q'(z)-anz^{n-1}Q(z)+az^n Q'(z)\right]-\frac{\la d}{z^{d+1}}=0. $$ As before, we write the operators $S_{\la, \xi}$ as $$S_{\la, \xi}(z)=\xi \left(\frac{ d Q^2 (z)}{(n+1)zQ(z)-an Q(z) -z^2 Q'(z) +azQ'(z) }\right)^{\frac{1}{n+d}}\la^{\frac{1}{n+d}},$$ which have the critical points of $\jl$ as fixed points. The argument made is identical since $Q(0) \ne 0$, so $S_{\la, \xi}$ are holomorphic in the neighbourhood of $z=0$. Finally, the critical points of the perturbation map are  of the form $$c_{\la, \xi}=\xi \left(\frac{d Q(0) }{ -na }\right)^{\frac{1}{n+d}}\la^{\frac{1}{n+d}}+o\left(\lambda^{\frac{1}{n+d}}\right).$$

\end{proof} 

Next we show that there exists a straight annulus (we will show later that it belongs to a doubly connected Fatou component) which is mapped into $T_{\la}$ under $\jl$. Let 
$$
c_1=\frac{1}{2}\min\left\{ \left| \frac{d Q(0) }{ na }\right|^{\frac{1}{n+d}},\left|\frac{ Q(0)}{a}\right|^{\frac{1}{n+d}}\right\} \quad {\textrm {and}} \quad c_2=2\max\left\{ \left|\frac{d Q(0) }{ na }\right|^{\frac{1}{n+d}},\left|\frac{ Q(0)}{a}\right|^{\frac{1}{n+d}} \right\}.
$$ 

\begin{lemma}
\thlabel{cichicea}
Let $\jl$ satisfying conditions (a), (b), (c), and (d). Then, for $\modulu$ small enough, the straight annulus 
\begin{equation}
\label{eq:c1c2}
\Omega_\lambda:=A\left(\mathbb{S}_{c_1 |\la|^{\frac{1}{n+d}}}, \mathbb{S}_{c_2 |\la|^{\frac{1}{n+d}}}\right)
\end{equation}
contains the points $c_{\la, \xi}$ and $z_{\la, \xi}$ introduced in \thref{samoaracaii} and it is mapped into $\tl$ under $\jl$. 
\end{lemma}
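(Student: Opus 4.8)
The plan is to split the statement into two independent parts: that the points $c_{\la,\xi}$ and $z_{\la,\xi}$ from \thref{samoaracaii} lie inside $\Omega_\lambda$, and that $\jl(\Omega_\lambda)\subset\tl$. The first part is a modulus computation, while the second combines an estimate of $|\jl|$ on the annulus with \thref{Klemma}(ii).

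For the containment of the distinguished points, write $A:=\left|\frac{dQ(0)}{na}\right|^{1/(n+d)}$ and $B:=\left|\frac{Q(0)}{a}\right|^{1/(n+d)}$, so that $c_1=\tfrac12\min\{A,B\}$ and $c_2=2\max\{A,B\}$. By \thref{samoaracaii}, $|c_{\la,\xi}|=A\,|\la|^{1/(n+d)}(1+o(1))$ and $|z_{\la,\xi}|=B\,|\la|^{1/(n+d)}(1+o(1))$ as $\la\to0$. Since $c_1\le\tfrac12 A<A<2A\le c_2$ (and likewise with $B$ in place of $A$), for $|\la|$ small enough both moduli lie strictly between $c_1|\la|^{1/(n+d)}$ and $c_2|\la|^{1/(n+d)}$; hence $c_{\la,\xi},z_{\la,\xi}\in\Omega_\lambda$.

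For the image, I would bound $|\jl(z)|$ on $\Omega_\lambda$, i.e. for $c_1|\la|^{1/(n+d)}\le|z|\le c_2|\la|^{1/(n+d)}$, by treating the two summands of $\jl(z)=\frac{z^n(z-a)}{Q(z)}+\frac{\la}{z^d}$ separately. As $\la\to0$ every such $z$ tends to $0$, so $|z-a|\le|a|+1$ and $|Q(z)|$ is bounded below by some $m>0$ (using $Q(0)\ne0$); thus the polynomial part is at most $\frac{|a|+1}{m}\,c_2^{\,n}|\la|^{n/(n+d)}$. The singular part is at most $c_1^{-d}|\la|^{\,1-d/(n+d)}=c_1^{-d}|\la|^{n/(n+d)}$. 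The crucial observation is that both pieces scale exactly like $|\la|^{n/(n+d)}$, so $|\jl(z)|\le C\,|\la|^{n/(n+d)}$ for a constant $C=C(n,d,a,Q)$; in particular $\jl(\Omega_\lambda)$ is contained in a disk about the origin whose radius tends to $0$.

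Finally I would invoke \thref{Klemma}(ii), whose proof establishes that for any prescribed $K_1>0$ and $|\la|$ small the disk $\{|z|<K_1|\la|^{n/(n+d)}\}$ lies in the trap door $\tl$; here condition (d), equivalently $\frac1d<\frac{n}{n+d}$, is exactly what forces the pole term to dominate and guarantees this containment. Choosing $K_1>C$ then gives $\jl(\Omega_\lambda)\subset\{|w|<K_1|\la|^{n/(n+d)}\}\subset\tl$, as required. I expect the main obstacle to be the middle step: one must verify that the polynomial and singular parts of $\jl$ neither cancel nor blow up but both sit at the common scale $|\la|^{n/(n+d)}$ on $\Omega_\lambda$, and that this scale is small enough to be absorbed by $\tl$ — which is precisely where hypothesis (d) enters.
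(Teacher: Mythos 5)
Your proposal is correct and follows essentially the same route as the paper's own proof: containment of $c_{\la,\xi}$ and $z_{\la,\xi}$ read off from the asymptotics of \thref{samoaracaii}, a term-by-term estimate giving $|\jl(z)|\le C|\la|^{\frac{n}{n+d}}$ on $\Omega_\lambda$ (with $|Q|$ bounded below near $0$ since $Q(0)\neq 0$), and then an appeal to \thref{Klemma}\textit{(ii)}. You were also right to invoke what the \emph{proof} of \thref{Klemma}\textit{(ii)} establishes, namely that the disk of radius $K_1|\la|^{\frac{n}{n+d}}$ lies in $\tl$ (requiring exactly $\frac{1}{d}<\frac{n}{n+d}$, i.e.\ condition (d)), since the exponent $\frac{n+d}{nd}$ appearing in that lemma's statement is inconsistent with its proof and with the use made of it here.
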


\begin{proof}
The first part of the statement follows directly from the algebraic expression of the points $c_{\la, \xi}$ and $z_{\la, \xi}$ in \thref{samoaracaii}. The rest of the proof is devoted to show that  $\jl \left(\Omega_\lambda\right) \subset \tl$.

Let  $ m  = \min \{|z|, \, Q(z)=0\}$ and let  $M=\min\{ |Q(z)|,\, |z|<m/2 \}$ (notice that $M>0$ since $z=0$ is not a root of $Q$). Let $z \in \Omega_\lambda$. For $\modulu$ small enough we have $$|\jl(z)|< \frac{c_2^n |\la|^{\frac{n}{n+d}}(|a|+1)}{M}+\frac{ |\la|^{\frac{n}{n+d}}}{c_1^d}.$$ We can rewrite this as $|\jl(z)|<K_1|\la|^{\frac{n}{n+d}}$, where $K_1$ depends on $Q$, $c_1$ and $c_2$, but it does not depend on $z$ and $\lambda$.  By \thref{Klemma}, for $\modulu$ small enough, the disk centered at $z=0$ and of radius $K_1|\la|^{\frac{n}{n+d}}$ lies in $\tl$, as desired.

\end{proof}

In the next proposition we describe the skeleton of the dynamical plane for $\modulu$ {\it small} (compare  Figure \ref{pozanr1}). 
Recall that $w_{\la}$ is the zero of $\jl$ which is the continuous extension of $w_0=a$ and $\cl$ is the continuous extension of the critical point $\nu_0$ in $\mathcal{A}^{*}(0)$ of $\jn$. 

\begin{figure}

\includegraphics[width=7cm, height=7cm]{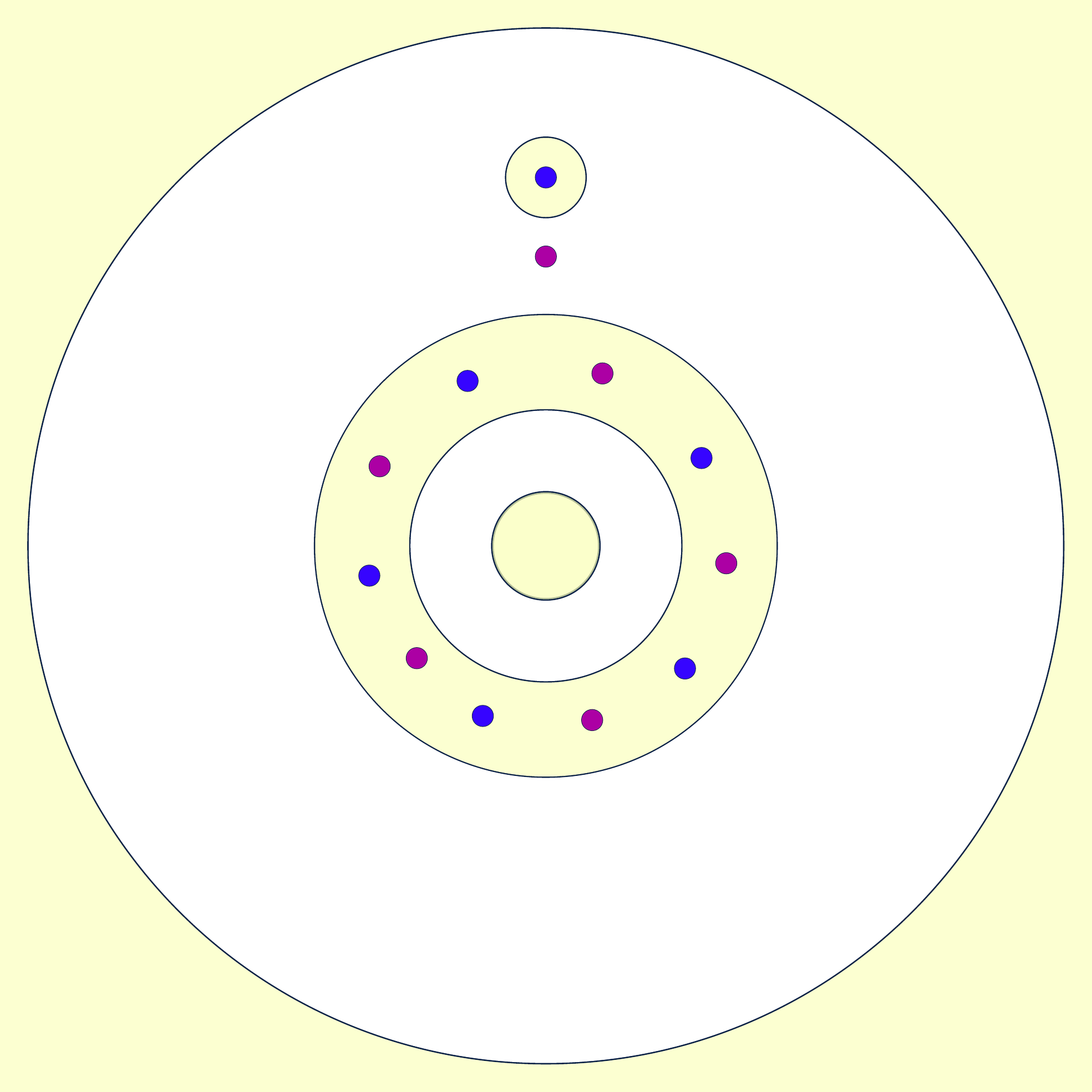}
\put(-105,96){$\tl$}
\put(-87,124){$\ala$}
\put(-92,163){$\dl$}
\put(-112,150){$\nu_{\la}$}
\put(-38,180){$\bl$}

  \caption{Partition of the dynamical plane with respect to $\bl$, $\ala$, $\tl$, and $\dl$, described in \thref{munnum}. Blue and purple points denote zeros and critical points, respectively.}
\label{pozanr1}

\end{figure}

\begin{prop}
\thlabel{munnum}
Let  $\jl$ satisfying conditions (a), (b), (c), and (d). Then, there exists a constant $\sefu=\sefu(a, Q,n,d)$ such that if $\lambda\neq 0$ and $\modulu<\sefu$ the following statements are satisfied:
\begin{enumerate}[label=(\roman*)]

\item{The Fatou component $\tl$ is simply connected and it is mapped with degree $d$ onto $\bl$ under $\jl$. There are no other preimages of $\bl$. }
\item{There exists a Fatou component $\ala$ which is doubly connected and contains exactly $n+d$ simple critical points, given by $c_{\la, \xi}$, and $n+d$ zeros, given by $z_{\la, \xi}$. Moreover, $\ala$ is mapped with degree $n+d$ onto $\tl$ and surrounds the origin. }
\item{Let $A^{\textrm {out}}$ be the annulus bounded by $\overline{\ala}$ and $\partial \bl$. There exists a Fatou component $\dl\subset A^{\textrm {out}}$ which is simply connected, is mapped with degree $1$ onto $\tl$, and contains $w_{\la}$.}
\item{The critical point $\cl$ lies in $A^{\textrm {out}}\setminus \dl$.}
\item{There are no preimages of $\tl$ other than $\dl$ and $\ala$.}
\item{Let $A^{\textrm {in} }$ be the annulus bounded by $\partial \tl$ and $\overline{ \ala}$. Then, $A^{\textrm {in} }$ is mapped onto the annulus $A(\partial \tl,\partial \bl)$ with degree $d$.}
\end{enumerate}
\end{prop}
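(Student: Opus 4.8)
The plan is to run a single global bookkeeping of critical points and of preimages, and then read off each item from the Riemann--Hurwitz formula (\thref{RH}). First I would record the budget: since $\jl$ has degree $n+d+1$ it has $2(n+d)$ critical points with multiplicity; by \thref{prop:continfty} exactly $n$ of them lie in $\bl$, the pole $z=0$ contributes multiplicity $d-1$, \thref{samoaracaii} produces the $n+d$ simple points $c_{\la,\xi}\in\Omega_\lambda$, and $\cl$ (the continuous extension of $\nu_0$) is the last one, because $n+(d-1)+(n+d)+1=2(n+d)$. Likewise the $n+d+1$ zeros are $w_\la$ together with the $z_{\la,\xi}$. For (i) I would count preimages of $\bl$: as $\jl|_{\bl}$ has degree $n+1$, the preimages of $\bl$ outside $\bl$ carry total degree $d$; since $z=0$ is a pole of order $d$ mapped to $\infty\in\bl$, the component $\tl$ containing $0$ already maps onto $\bl$ with local degree $d$ at $0$, so it absorbs all the remaining degree and there are no further preimages. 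Equivalently $\jl^{-1}(\partial\bl)=\partial\bl\sqcup\partial\tl$, and $\deg(\jl|_{\tl})=d$.

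Next, for (ii), (iii) and (v) I would analyse $\jl^{-1}(\tl)$ through the preimages of the point $0\in\tl$, namely $w_\la$ and the $z_{\la,\xi}$. The latter lie in $\Omega_\lambda$, which, being connected and contained in the Fatou set, sits inside a single component $\ala$ of $\jl^{-1}(\tl)$; this component is distinct from $\tl$ (as $\tl$ maps to $\bl\neq\tl$) and surrounds $0$ because $\Omega_\lambda$ does. Thus $\ala$ contains exactly the $n+d$ preimages $z_{\la,\xi}$, so $\deg(\jl|_{\ala})=n+d$, while the component $\dl$ containing $w_\la$ carries the remaining degree $1$; since $n+d$ and $1$ already exhaust the global degree, there are no other preimages of $\tl$, which is (v). A degree-one proper map onto a disk is conformal, so $\dl$ is simply connected, maps conformally onto $\tl$ and contains $w_\la$, giving the core of (iii).

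The crux, and what I expect to be the \textbf{main obstacle}, is the quantitative localisation needed for (iv) and to pin down the two connectivities. I must show that $\overline{\tl}$, $\overline{\ala}$ and the annulus between them all shrink to $z=0$ as $\la\to0$. The mechanism is domination of the unperturbed term: on a round shell $\{\rho_1<|z|<\rho_2\}$ with $|\la|^{1/(n+d)}\ll\rho_1=o(1)$ and $\rho_2$ a fixed small constant, one has $\jl(z)=\jn(z)+\la/z^d\approx\jn(z)\approx-\tfrac{a}{Q(0)}z^n$, so $|\jl(z)|$ is bounded below yet small; hence $\jl(z)$ lies in $\textrm{Int}(\partial\bl)$ but outside the (shrinking) trap door $\tl$, so such $z$ belongs neither to $\tl$ nor to $\ala$. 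As this shell is disjoint from the connected sets $\tl$ and $\ala$ (both meeting its inside, at $0$ and at $\Omega_\lambda$), it confines $\overline{\tl}\cup\overline{\ala}\cup A^{\textrm{in}}$ to $\{|z|\le\rho_1\}=o(1)$. Since $\cl\to\nu_0\neq0$ and $w_\la\to a\neq0$, with $\nu_0\neq a$ (because $a$ is a simple zero, hence not a critical point, of $\jn$), for $\modulu$ small both $\cl$ and $\dl$ land in $A^{\textrm{out}}$ and $\cl\notin\dl$; this proves (iv) and completes the location claim in (iii), and records $\cl\notin\overline{\tl}\cup\overline{\ala}$.

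With the localisation in hand I would close the connectivities by Riemann--Hurwitz. For $\jl\colon\ala\to\tl$ (degree $n+d$, target simply connected) the critical points inside are exactly the $n+d$ points $c_{\la,\xi}$ (as $\cl\notin\ala$), so $m_{\ala}-2=(n+d)(1-2)+(n+d)=0$, proving $\ala$ is doubly connected, which is (ii). For $\jl\colon\tl\to\bl$ (degree $d$) the only interior critical point is $z=0$ of multiplicity $d-1$ (the $c_{\la,\xi}$ cannot enter $\tl$ since $\jl(\Omega_\lambda)\subset\tl$ by \thref{cichicea} while $\jl(\tl)=\bl$ is disjoint from $\tl$, and $\cl\notin\tl$), so $m_{\tl}-2=d(1-2)+(d-1)=-1$, proving $\tl$ is simply connected and finishing (i). Finally, for (vi) I would use that $A^{\textrm{in}}$ is an annulus free of critical points (all $2(n+d)$ of them have been placed elsewhere) whose inner boundary $\partial\tl$ maps onto $\partial\bl$ with degree $d$ and whose outer boundary (the inner component of $\partial\ala$) maps onto $\partial\tl$; being a connected component of $\jl^{-1}\!\left(A(\partial\tl,\partial\bl)\right)$, the restriction $\jl\colon A^{\textrm{in}}\to A(\partial\tl,\partial\bl)$ is proper and, having no critical points, is an unbranched covering of one annulus onto another, so its degree equals the degree $d$ read off on $\partial\tl$, which is exactly (vi).
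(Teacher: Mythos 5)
Your proposal follows the paper's skeleton quite closely (critical--point and zero bookkeeping, degree counting to identify $\tl$, $\ala$, $\dl$ and to get (v), Riemann--Hurwitz for the two connectivities, and the boundary-degree argument for (vi)); the one place where you genuinely depart from the paper is the localisation step that you yourself flag as the main obstacle, and it is exactly there that your argument breaks. Your shell condition $|\la|^{1/(n+d)}\ll\rho_1=o(1)$ is calibrated only so that the unperturbed term dominates the perturbation on the shell ($|\la|/\rho_1^d\ll\rho_1^n$); it does not make the image of the shell clear the trap door. Indeed, $\tl$ has outer radius comparable to $|\la|^{1/d}$: a point $z\in\tl$ must satisfy $|\jl(z)|\geq K''$, where $K''$ is the (definite) distance from $0$ to $\partial\bl$, which near the origin forces $|\la|/|z|^d\gtrsim K''$, and conversely every $z$ with $|\la|/|z|^d$ large maps into $\{|w|>K\}\subset\bl$. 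Now take $\rho_1$ equal to $|\la|^{1/(n+d)}$ times a slowly growing factor, which your hypothesis allows. On the inner edge of the shell $|\jl(z)|\asymp\rho_1^n\asymp|\la|^{n/(n+d)}$, and condition (d), i.e.\ $nd>n+d$, gives precisely $|\la|^{n/(n+d)}\ll|\la|^{1/d}$: such points map \emph{into} $\tl$, not past it. In fact the whole round annulus $A\bigl(\mathbb{S}_{c_1|\la|^{1/(n+d)}},\,\mathbb{S}_{c|\la|^{1/(nd)}}\bigr)$ (for a suitable small $c$) maps into $\tl$, contains $\Omega_\la$, and hence lies in $\ala$; so $\ala$ reaches out to radius $\asymp|\la|^{1/(nd)}\gg|\la|^{1/(n+d)}$, and for part of your admissible range of $\rho_1$ the shell sits \emph{inside} $\ala$, making the claim ``such $z$ belongs neither to $\tl$ nor to $\ala$'' false. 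The root of the problem is that you never prove any upper bound on the size of $\tl$ (``shrinking'' is asserted, not shown), and ``$|\jl(z)|$ bounded below by $c\rho_1^n$'' proves nothing until such a bound is in place.

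The gap is repairable inside your scheme, but it needs two additions: first, your small-image estimate alone does show the shell misses $\tl$, so $\tl\subset\{|z|\le\rho_1\}$ by connectedness; second, on $\tl$ the inequality $|\jl(z)|\ge K''$ together with $|z^n(z-a)/Q(z)|=o(1)$ yields $\tl\subset\{|z|\le C|\la|^{1/d}\}$; and then you must take $\rho_1\gg|\la|^{1/(nd)}$ (which under (d) automatically implies your $\rho_1\gg|\la|^{1/(n+d)}$), so that $c\rho_1^n\gg|\la|^{1/d}$ and the image of the shell clears $\tl$, whence the shell misses $\ala$ as well. Note that this corrected threshold is exactly where condition (d) does its work in your route. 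By contrast, the paper avoids all size estimates with a soft argument: it takes a $\la$-independent Jordan curve $\gamma$, built from the B\"ottcher coordinate of $\jn$ at $0$, separating $0$ from $\nu_0$ and $w_0$ and satisfying $\jn^2(\gamma)\subset\textrm{Int}(\gamma)$; by continuity $\jl^2(\gamma)\subset\textrm{Int}(\gamma)$ for $\modulu$ small, while $\jl^2(\tl\cup\ala)\subset\bl$, so $\gamma$ can meet neither set and traps both in $\textrm{Int}(\gamma)$. One last, smaller point: in your second paragraph the assertion that $\ala$ contains \emph{exactly} the $n+d$ points $z_{\la,\xi}$ (hence $\deg \jl|_{\ala}=n+d$ and $\dl$ exists, giving (iii) and (v)) already presupposes $w_\la\notin\ala$, i.e.\ the localisation; that degree count must come after it, not before.
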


\begin{proof}
Before proving the statements of the proposition we study the location and distribution of the critical points of $\jl$. 

By \thref{prop:continfty}, $\bl$ is simply connected (in the Riemann sphere) and it is mapped onto itself with degree $n+1$. Since the global degree of the map $\jl$ is $n+d+1$, there exist exactly $d$ preimages of $\infty$ outside $\bl$, counting multiplicity. Since $z=0$ is mapped to $\infty$ with degree $d$, there exist no other preimages of $\infty$  (different from the ones in $\bl$ and $z=0$). Moreover $\tl$ is mapped with degree $d$ onto $\bl$ (observe that up to this point we still do not know if $\tl$ is simply connected). 

Let $\Omega_\lambda$ be the annulus defined in \eqref{eq:c1c2}. By  \thref{cichicea}, we know that $\jl(\Omega_\lambda)\subset \tl$ and $\jl(\tl) \subset \bl$. Thus, $\Omega_\lambda \cap \tl =\emptyset$ and $\Omega_\lambda$ is part of a multiply connected Fatou component which is a preimage of $\tl$. We denote this Fatou component by $\ala$ (observe that up to this point we still do not know if  $\ala$ is doubly connected). 

We claim that $w_{\la}$ and $\cl$ do not belong to $\tl \cup \ala$. To see the claim we will prove that, for sufficiently small values of $\modulu$, $w_{\la}$ and $\cl$  belong to the annulus bounded by $\partial \bl$ and $\overline{\ala}$, denoted in what follows by $A^{\textrm {out}}$. Let $\gamma$ be a smooth Jordan curve which separates $z=0$  from $\nu_0$ and $w_0$, and such that $\jn^2(\gamma)$ is a Jordan curve that surrounds $z=0$ and lies in $\textrm{Int}(\gamma)$. Its existence follows from the B\"otcher coordinates of the fixed point $z=0$ for the unperturbed map. Notice that, by construction, $\gamma$  does not depend on $\lambda$ and it has a definite distance to $z=0$. By continuity with respect to $\la$, for $\modulu$ small enough, $\jl^2(\gamma)\subset\textrm{Int}(\gamma)$. Since $\jl^2(\ala) \subset \bl$ we conclude that $\gamma\cap \ala=\emptyset$. Shrinking $\modulu$, if necessary, we claim that $\gamma$ lies outside $\textrm{Bdd}\left(\Omega_\lambda\right)$. Indeed,  according to \eqref{eq:c1c2} the annulus $\Omega_\lambda$ collapses to the origin as $\lambda \to 0$ while $\gamma$ keeps in a definite distance to $z=0$. Finally, notice that for  $\modulu$ small $\cl$ and $w_{\la}$ remain as close as we want to $\nu_0$ and $w_0$, respectively. Therefore, $\gamma$ separates $\cl$ and $w_{\la}$ from $\tl$ and $\ala$.  Let $\mathcal C$ be a constant such that if $\modulu < \mathcal C$ all the above is true. Now we are ready to prove the statements. 

Since $\tl$ contains only one critical point at $z=0$ with multiplicity $d-1$ and it is mapped with degree $d$ onto the topological disk $\bl$, it follows from the Riemann-Hurwitz formula that $\tl$ is simply connected. This proves \textit{(i)}. Similarly, $\ala$ contains exactly $n+d$ simple critical points and it is mapped with degree $n+d$ onto the topological disk $\tl$. Thus, it is doubly connected by Riemann-Hurwitz formula. This proves \textit{(ii)}.

The point $w_{\la}$ is a preimage of $z=0$ which lies in $A^{\textrm {out}}$, so it must belong to a Fatou component, denoted by $D_\lambda$, different from $T_\lambda$ and $A_\lambda$. Moreover, $\jl\left(D_\lambda\right)=T_\lambda$. Since $w_{\la}$ is the only (simple) preimage of $z=0$ in $D_\lambda$ we conclude that $\dl$ is mapped with degree $1$ onto $\tl$ and is a conformal copy of $\tl$. In particular, $\cl \notin \dl$ and all preimages of $z=0$ belong to either $A_\lambda$ and $\dl$. This proves \textit{(iii)}, \textit{(iv)}, and \textit{(v)}.

Finally, to prove statement \textit{(vi)} we just notice that 
$\jl |_{A^{\textrm {in} }} : A^{\textrm {in} }\rightarrow A(\partial \tl,\partial \bl)$ is a proper map. Since its degree is accomplished on the boundaries and $\partial \tl$ is mapped onto $\partial \bl$ with degree $d$, it follows that $A^{\textrm {in} }$ is mapped onto its image with degree $d$.

\end{proof}

We now prove that $\jl$ is conjugate to a finite Blaschke product on the annulus $A^{\textrm {out}}$ introduced in the previous proposition.

\begin{prop}
\thlabel{prop:conjblas}
Let $\jl$ satisfying (a), (b), (c), and (d). Let $\lambda\neq 0$, $\modulu<\sefu$. Then, there exist an analytic Jordan curve $\Gamma \subset \ala$ which surrounds $z=0$, $b \in \mathbb{D}^{*}$, $\theta\in [0,1)$, and
a quasiconformal map $\varphi: \hat{\mathbb{C}} \to \hat{\mathbb{C}} $ such that $\varphi \circ R_{b,\theta} = \varphi \circ \jl$ on 
$A(\Gamma, \, \partial \bl)$, where $$
R_{b,\theta}=e^{2 \pi i \theta} z^n \frac{z-b }{1-\overline{b}z}$$
is a Blaschke product.
\end{prop}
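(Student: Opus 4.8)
\emph{Strategy.} The plan is to turn $\jl$, near the quasicircle $\partial\bl$, into a genuine degree $n+1$ Blaschke product by a quasiconformal surgery, and then to read off its normal form. The dictionary is dictated by \thref{munnum}: the topological disk $\textrm{Int}(\partial\bl)$ (the bounded complementary component of $\bl$) will correspond to the unit disk $\mathbb{D}$, the curve $\partial\bl$ to the unit circle, and $\bl$ to $\hat{\mathbb C}\setminus\overline{\mathbb D}$. In this correspondence the degree-one component $\dl$ of \thref{munnum}\textit{(iii)} becomes the simple zero $b\in\mathbb{D}^{*}$, and the free critical point $\cl\in A^{\textrm{out}}\setminus\dl$ becomes the unique free critical point of $R_{b,\theta}$ inside $\mathbb{D}$. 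The obstruction to $\jl$ being literally a Blaschke product is that $\textrm{Int}(\partial\bl)$ is \emph{not} forward invariant: the trap door $\tl$ escapes to $\bl$. The surgery removes precisely this escape.

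\emph{Construction of a quasiregular model.} First I would fix an analytic Jordan curve $\Gamma\subset\ala$ surrounding $z=0$, chosen so that the $n+d$ critical points $c_{\la,\xi}$ and zeros $z_{\la,\xi}$ of \thref{samoaracaii} lie in $\textrm{Int}(\Gamma)$; then the only critical point of $\jl$ in $A(\Gamma,\partial\bl)$ is $\cl$, and $\dl\subset A(\Gamma,\partial\bl)$. Since $\jl(\Gamma)\subset\tl\subset\textrm{Int}(\Gamma)$, the map $\jl$ already pushes $\Gamma$ strictly inward, which is what makes the gluing possible. I would then define a quasiregular map $F:\rs\to\rs$ that coincides with $\jl$ on a neighbourhood of $\partial\bl$ containing $A(\Gamma,\partial\bl)$, and which deep inside $\textrm{Int}(\Gamma)$ is replaced, in B\"ottcher-type coordinates, by the superattracting model $z\mapsto z^{n}$, the transition taking place on a thin collar $C\subset\textrm{Int}(\Gamma)$ where $F$ is interpolated quasiconformally. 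After this modification $\textrm{Int}(\partial\bl)$ becomes the immediate basin of a superattracting fixed point $p$ of local degree $n$. A degree count confirms that $F$ has global degree $n+1$: the $d$ preimages in $\tl$ of a point of $\bl$ are destroyed, so only the $n+1$ preimages in $\bl$ survive, and the basin of $p$ is mapped onto itself with degree $n+1$, namely local degree $n$ at $p$ together with the single simple preimage carried by $\dl$. One must perform the symmetric modification on the $\bl$ side as well: deep inside $\bl$ the dynamics of $\jl$ (whose local degree at $\infty$ is $n+1-\deg Q$ and which may carry up to $n$ poles) is replaced by a degree-$n$ superattracting model at $\infty$ together with a single simple pole, keeping $F=\jl$ on the neighbourhood of $\partial\bl$. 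This is the step that makes the straightened map symmetric, hence of the desired form.

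\emph{Straightening and identification.} The only region where $F$ fails to be holomorphic consists of the two collars. Points of $\bl$ and of $\textrm{Int}(\partial\bl)$ are attracted to $\infty$ and $p$ respectively and meet the corresponding collar only finitely often before entering the B\"ottcher disk of the superattracting fixed point; hence spreading the standard complex structure by the dynamics produces an $F$-invariant Beltrami coefficient $\mu$ with $\|\mu\|_{\infty}<1$. By the Measurable Riemann Mapping Theorem there is a quasiconformal $\varphi:\rs\to\rs$ integrating $\mu$, and $R:=\varphi\circ F\circ\varphi^{-1}$ is a rational map of degree $n+1$. By construction $R$ has two fully invariant simply connected Fatou components meeting along the Jordan curve $\varphi(\partial\bl)$, with superattracting fixed points of local degree $n$ on each side; after a M\"obius normalisation placing these points at $0$ and $\infty$ and the common boundary on the unit circle, $R$ is a finite Blaschke product of degree $n+1$ whose only zeros in $\mathbb{D}$ are one of order $n$ at $0$ and the simple one coming from $\dl$. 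Thus $R(z)=e^{2\pi i\theta}z^{n}\frac{z-b}{1-\overline{b}z}=R_{b,\theta}$ for suitable $b\in\mathbb{D}^{*}$ and $\theta\in[0,1)$. Finally, since $F=\jl$ on $A(\Gamma,\partial\bl)$ while $\varphi\circ F=R_{b,\theta}\circ\varphi$ holds globally, we obtain the asserted conjugacy $\varphi\circ\jl=R_{b,\theta}\circ\varphi$ on $A(\Gamma,\partial\bl)$.

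\emph{Main obstacle.} The heart of the argument is the surgery bookkeeping together with the dilatation control: one must arrange the collars and the gluing so that $F$ is quasiregular of global degree exactly $n+1$, so that each orbit meets the collars only boundedly often (guaranteeing $\|\mu\|_{\infty}<1$), and so that the surviving zero structure is precisely $(n,1)$ on each side. It is here that the degree-one property of $\dl$ from \thref{munnum} is essential, forcing the M\"obius factor $\frac{z-b}{1-\overline{b}z}$ to have degree one and $b$ to be a nonzero point of $\mathbb{D}$. The symmetrisation on the $\bl$ side, needed to reach the exact normal form $R_{b,\theta}$ for every admissible $Q$ (in particular when $\deg Q=n$, so that $\infty$ is only attracting for $\jl$), is the most delicate point.
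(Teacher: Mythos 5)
Your overall strategy (glue in the model $z\mapsto z^{n}$ by a quasiconformal surgery near $\partial\bl$ and then straighten) is the paper's strategy, but there are two genuine gaps. The first is in the gluing step itself. To interpolate quasiregularly on a collar between $\jl$ (outside) and the model $z\mapsto z^{n}$ (inside), the two boundary maps must be coverings of Jordan curves \emph{of the same degree}; your justification --- that $\jl(\Gamma)\subset\tl\subset\textrm{Int}(\Gamma)$ ``makes the gluing possible'' --- is not sufficient, since mapping inward is only a necessary condition. You never establish that $\jl|_{\Gamma}$ is a degree-$n$ covering onto a Jordan curve; indeed, if you pick $\Gamma$ upstairs, as you do, it is not even automatic that $\jl(\Gamma)$ is a Jordan curve. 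This is precisely the content of the first half of the paper's proof: one chooses first an analytic Jordan curve $\gamma\subset\tl$ surrounding $z=0$ and the critical values of the $c_{\la,\xi}$, pulls back the annulus $A(\gamma,\partial\tl)$ (which contains no critical values, so all preimage components are annuli by Riemann--Hurwitz), and uses \thref{munnum}\textit{(ii)} and \textit{(vi)} to split the degree $n+d$ of $\jl|_{\ala}$ as $d$ on the component attached to the inner boundary of $\ala$ and hence $n$ on the component attached to the outer boundary; $\Gamma$ is then \emph{defined} as the inner boundary of that outer component, which makes $\jl|_{\Gamma}:\Gamma\to\gamma$ a degree-$n$ covering of an analytic Jordan curve. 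Without this, the surgery cannot be performed.

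The second gap is your ``symmetrisation on the $\bl$ side,'' which is both unnecessary and, as formulated, incorrect. After straightening, the common boundary of the two invariant components is the image of the quasicircle $\partial\bl$, and a M\"obius transformation cannot send a general quasicircle to the round unit circle; so your normalised map $R$ need not literally be of the form $R_{b,\theta}$ (making it so would require building the circular symmetry into the invariant Beltrami coefficient, a genuinely delicate extra step that you do not carry out). The paper avoids all of this because the proposition only claims a conjugacy on $A(\Gamma,\partial\bl)$, i.e.\ on one side of $\partial\bl$: keep $F=\jl$ on all of $\textrm{Ext}(\Gamma)$ (no modification in $\bl$ at all), straighten to get a rational map $f$ of degree $n+1$, and observe that $f$ restricts to a \emph{proper} holomorphic self-map of degree $n+1$ of the simply connected domain $\varphi(\textrm{Int}(\partial\bl))$, with a fixed point of local degree $n$ and exactly one further simple preimage of that fixed point (coming from $w_\la\in\dl$). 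Composing with a Riemann map sending the fixed point to $0$, a proper self-map of $\mathbb{D}$ is a finite Blaschke product, and this zero structure forces the normal form $e^{2\pi i\theta}z^{n}(z-b)/(1-\overline{b}z)$ with $b\in\mathbb{D}^{*}$. So the identification you tried to obtain by a two-sided surgery comes for free from the one-sided restriction; your route adds difficulty and the step meant to close it (the M\"obius normalisation) fails.
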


\begin{proof}
We claim that there exists an analytic Jordan curve $\Gamma \subset A_\lambda$ which surrounds $z=0$ and the $n+d$ critical points $c_{\la, \xi}$ and which is mapped with degree $n$ to its image under $\jl$. 

To see the claim let $\gamma$ be an analytic Jordan curve in the interior of $\tl$ surrounding $z=0$ and the $n+d$ critical values, images of the critical points $c_{\la, \xi}$. Clearly, the annulus $A(\gamma , \, \partial \tl)$ contains no critical values and, from the Riemann-Hurwitz formula (compare \thref{corocoro}),  any connected component of its preimage is an annulus bounded by preimages of $\gamma$ and $\partial \tl$. It follows from \thref{munnum} that two (among a total of three) of those preimages are disjoint annuli in $A_\lambda$, one associated to the internal boundary of $A_\lambda$ and another associated to the external one. Denote them by $\mathcal G^{\textrm {in} }$ and $\mathcal G^{\textrm {out}}$. By construction, $\jl$ restricted to those two preimages is a proper map. We know that $\jl$ restricted to $\ala$ is proper of degree $n+d$ (see \thref{munnum} \textit{(ii)}) while $\jl$
restricted to $\mathcal G^{\textrm {in}}$ is proper of degree $d$ (notice that the degree is achieved in the boundary, compare with \thref{munnum}\textit{(vi)}). All together implies that $\jl$ restricted to $\mathcal G^{\textrm {out}}$ is proper of degree $n$. Let $\Gamma$ be the inner boundary of $\mathcal G^{\textrm {out}}$. Then, $\Gamma$ is an analytic Jordan curve, it maps to $\gamma$ with degree $n$, and it surrounds the origin as well as all critical points $c_{\la, \xi}$, as desired.

The remaining part of the proof is analogous to the one of \cite[Proposition 3.1]{Can2}, so we provide the main idea and leave the details to the reader. The strategy is to use a similar construction to the one of the Straightening Theorem for polynomial-like mappings (compare \cite[Theorem 7.4]{BF}) to glue a dynamics conjugated to the one of the map $z \to z^n$ inside the curve $\gamma$, keep $\jl$ outside $\Gamma$, and interpolate using a quasi-conformal map in the annulus $A\left(\gamma,\Gamma\right)$. 
In this way we obtain a quasiregular map $F$ of the Riemann sphere which has $z=0$ as superattracting fixed point of local degree $n$ ($F$ is actually holomorphic around $ z=0$). The map $F$ coincides with $\jl$ on $\textrm{Ext}(\Gamma)$, all points in $\textrm{Int}(\partial \bl)$ converge to $z=0$ under iteration of $F$, and it maps $\textrm{Int}(\partial \bl)$ onto itself with degree $n+1$ (since we have that $z=0$ maps to itself with degree $n$ and $w_\lambda$ is the only further preimage of $z=0$). 

The map $F$ is conjugate to a holomorphic function $f$ via a quasiconformal $\varphi$ map fixing $z=0$. The basin of attraction of $z=0$ under $f$ is given by $\varphi (\textrm{Int}(\partial \bl))$. Since the basin of attraction is simply connected, $f$ is conjugate to a Blaschke product in  $\varphi (\textrm{Int}(\partial \bl))$. Since $z=0$ is superattracting of local degree $n$ and $f$ maps $\varphi (\textrm{Int}(\partial \bl))$ onto itself with degree $n+1$, the Blaschke product has the form  $R_{b,\theta}=e^{2 \pi i \theta} z^n \frac{z-b }{1-\overline{b}z}$, where $b \in \mathbb{D}^{*}$ and $\theta$ satisfies $|e^{2 \pi i\theta}|=1$. Since $F$ coincides with $\jl$ in $A(\Gamma, \, \partial \bl)$, it follows that $\jl$ is conjugate to $R_{b,\theta}$ in $ A(\Gamma,  \, \partial \bl)$.
\end{proof}

It will be crucial in what follows to have a deep understanding of the preimages of curves which surround the origin $z=0$ (as well as Fatou components). The following proposition describes this in a precise way.

\begin{prop}

\thlabel{prop:preimcurves}
Let $\jl$ satisfying (a), (b), (c), and (d). Let $\lambda\neq 0$, $\modulu<\sefu$. Let $\gamma \subset A(\partial \tl,\partial \bl)$ be  a Jordan curve which surrounds $z=0$. Then, $\jl^{-1}(\gamma)$ contains a single connected component in $A^{\textrm {in} }$, which surrounds $z=0$ and is mapped with degree $d$ onto $\gamma$. The other components of $\jl^{-1}(\gamma)$ lie in $A^{\textrm {out}}$ and, depending on the location of the free critical value, i.e. $\jl(\cl)$, one of the following holds:

\begin{enumerate}[label=(\roman*)]

\item{If $\jl (\cl)\in \textrm{Int} (\gamma)$, then $\jl^{-1} (\gamma)$ has a single connected component in $A^{\textrm {out}}$. Indeed, it is a Jordan curve which surrounds $z=0$ and it is mapped with degree $n+1$ onto $\gamma$ under $\jl$.}
\item{If $\jl(\cl)\in \gamma$, then $\jl^{-1} (\gamma)$ has a single connected component in $A^{\textrm {out}}$ consisting of 2 Jordan curves intersecting precisely at $\cl$. One is a Jordan curve $\gamma_0^{-1}$ which surrounds $z=0$, but not $w_{\la}$, and it is mapped with degree $n$ onto $\gamma$. The other is a Jordan curve $\gamma_w^{-1}$ which surrounds $w_{\la}$, but not $z=0$, and it is mapped with degree $1$ onto $\gamma$. The curve $\gamma_0^{-1}$ does not surround $\gamma_w^{-1}$.}
\item {If $\jl(\cl)\in \textrm{Ext} (\gamma)$, then $\jl^{-1} (\gamma)$ has 2 disjoint components in $A^{\textrm {out}}$. One is a Jordan curve $\gamma_0^{-1}$ which surrounds $z=0$, but not $w_{\la}$, and it is mapped with degree $n$ onto $\gamma$. The other is a Jordan curve $\gamma_w^{-1}$ which surrounds $w_{\la}$, but not $z=0$, and it is mapped with degree $1$ onto $\gamma$. The curve $\gamma_0^{-1}$ does not surround $\gamma_w^{-1}$.}

\end{enumerate}
\end{prop}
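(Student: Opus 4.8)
The plan is to reduce everything to the Riemann--Hurwitz formula (\thref{RH}) applied to two explicit proper maps, one on $A^{\textrm{in}}$ and one on a punctured version of $A^{\textrm{out}}$, after first localising the preimages. Since $\gamma\subset A(\partial\tl,\partial\bl)$ surrounds $z=0$, it is disjoint from $\overline{\tl}$ and from $\overline{\bl}$; because $\jl(\ala)=\jl(\dl)=\tl$, $\jl(\tl)=\bl$ and $\jl(\bl)=\bl$ by \thref{munnum}, and $\gamma$ meets none of $\tl,\bl,\partial\tl$, the set $\jl^{-1}(\gamma)$ misses the open Fatou components $\tl,\ala,\dl,\bl$ as well as $\partial\ala$ and $\partial\dl$. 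Hence each connected component of $\jl^{-1}(\gamma)$ lies entirely in $A^{\textrm{in}}$ or entirely in $A^{\textrm{out}}\setminus\overline{\dl}$. A global count of the $2(n+d)$ critical points ($n$ in $\bl$, $d-1$ at $z=0$, the $n+d$ points $c_{\la,\xi}$ in $\ala$, and $\cl$) shows $A^{\textrm{in}}$ contains none of them, so by \thref{munnum}(vi) the restriction $\jl\colon A^{\textrm{in}}\to A(\partial\tl,\partial\bl)$ is an unbranched degree-$d$ cover of annuli; the preimage of the core curve $\gamma$ is therefore a single Jordan curve surrounding $z=0$ and mapped with degree $d$. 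This settles the $A^{\textrm{in}}$ assertion.

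The structural heart is to isolate a clean proper map on $A^{\textrm{out}}$. I would show that
$$g:=\jl|_{A^{\textrm{out}}\setminus\overline{\dl}}\colon A^{\textrm{out}}\setminus\overline{\dl}\longrightarrow A(\partial\tl,\partial\bl)$$
is proper of degree $n+1$ with exactly one simple critical point, namely $\cl$. The boundary degrees are read off from \thref{munnum}: $\partial\bl\to\partial\bl$ has degree $n+1$; the degree-$d$ annular cover $A^{\textrm{in}}\to A(\partial\tl,\partial\bl)$ of \thref{munnum}(vi) shows the inner boundary of $\ala$ covers $\partial\tl$ with degree $d$, whence by \thref{munnum}(ii) (total degree $n+d$ on $\ala$) its outer boundary covers $\partial\tl$ with degree $n$; and $\partial\dl$ covers $\partial\tl$ with degree $1$. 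The two contributions over $\partial\tl$ add to $n+1$, matching the degree over $\partial\bl$, so $g$ is proper of degree $n+1$. Its domain is a pair of pants, and \thref{RH} forces exactly one simple critical point, which the global count identifies as $\cl$.

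With $g$ in hand the three cases are distinguished by the position of the free critical value $\jl(\cl)\in A(\partial\tl,\partial\bl)$ relative to the subannuli $A(\partial\tl,\gamma)$ and $A(\gamma,\partial\bl)$ into which $\gamma$ splits the target. In case (i), $\jl(\cl)\in\textrm{Int}(\gamma)$, so $g^{-1}(A(\gamma,\partial\bl))$ carries no critical point; by \thref{corocoro}(i) its components are annuli, the one capped by $\partial\bl$ is an unbranched cover whose inner boundary maps to $\gamma$ with degree $n+1$, and this already exhausts the degree budget $n+1$ of $g$ over $\gamma$, so there are no further components. The resulting single Jordan curve surrounds $z=0$ and is mapped with degree $n+1$. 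In cases (ii) and (iii) the critical point is not interior to $g^{-1}(A(\partial\tl,\gamma))$, so by \thref{corocoro}(i) that set is a disjoint union of annuli; one is capped by the outer boundary of $\ala$ (degree $n$) and one by $\partial\dl$ (degree $1$), producing curves $\gamma_0^{-1}$ and $\gamma_w^{-1}$ whose degrees $n$ and $1$ are inherited from those boundaries and again exhaust the budget $n+1$. In case (iii) the two annuli are disjoint components, so $\gamma_0^{-1}$ and $\gamma_w^{-1}$ are disjoint; in the transitional case (ii) the value $\jl(\cl)$ lies on $\gamma$, hence $\cl\in g^{-1}(\gamma)$ and the local degree-$2$ normal form of $g$ at $\cl$ produces two arcs of $g^{-1}(\gamma)$ crossing there.

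Finally, the nesting relations follow from a zero-counting argument: the annular region bounded by $\gamma_0^{-1}$ and the outer boundary of $\ala$ maps onto $A(\partial\tl,\gamma)$, which does not contain $z=0$ (as $z=0\in\tl$), so it contains no zero of $\jl$; in particular $w_{\la}\notin\textrm{Int}(\gamma_0^{-1})$, whence $\gamma_0^{-1}$ surrounds $z=0$ but neither $w_{\la}$ nor $\gamma_w^{-1}$. The symmetric argument on the region between $\gamma_w^{-1}$ and $\partial\dl$ shows $\gamma_w^{-1}$ surrounds $w_{\la}$ but not $z=0$. I expect the main obstacle to be the degenerate case (ii): one must verify, from the local normal form at the simple critical point together with the global two-component structure established above, that the two opposite local sectors of $g^{-1}(A(\partial\tl,\gamma))$ at $\cl$ belong to the two \emph{different} annular preimages, so that $\gamma_0^{-1}$ and $\gamma_w^{-1}$ remain Jordan curves meeting at the single point $\cl$ and nowhere else, with their degrees still $n$ and $1$ even though $\cl$ now sits on $g^{-1}(\gamma)$ rather than in the interior of an annular preimage.
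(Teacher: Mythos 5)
Your proposal is correct in substance, but it takes a genuinely different route from the paper's. For the $A^{\textrm {in}}$ part the two arguments coincide in spirit (no critical points in $A^{\textrm {in}}$ plus \thref{munnum}\textit{(vi)} force a single essential preimage curve of degree $d$). For statements \textit{(i)}--\textit{(iii)}, however, the paper does not argue directly: it invokes \thref{prop:conjblas}, the quasiconformal conjugacy of $\jl$ on $A^{\textrm {out}}$ to the Blaschke product $R_{b,\theta}$, and then refers to \cite[Proposition 3.3]{Can2}, where exactly this case analysis is carried out for the model map. You instead introduce the proper map $g=\jl|_{A^{\textrm {out}}\setminus\overline{\dl}}$ onto $A(\partial \tl,\partial \bl)$, read its degree $n+1$ and boundary degrees off \thref{munnum}, note that its only (simple) critical point is $\cl$, and run Riemann--Hurwitz and covering-degree bookkeeping on the two subannuli determined by $\gamma$. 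Your route buys self-containedness (no surgery construction, no external citation), at the price of having to handle the tangent case \textit{(ii)} by hand, which is precisely what the paper outsources to the Blaschke model. The step you flag as open does close with the objects you have already built: in case \textit{(ii)} the component of $g^{-1}\left(A(\gamma,\partial \bl)\right)$ attached to $\partial \bl$ has degree $n+1$, hence is the entire preimage of that subannulus, and its inner boundary is connected (it is the boundary of a complementary continuum of an annulus in the sphere) and equals all of $g^{-1}(\gamma)$; on the other hand, every noncritical point of $g^{-1}(\gamma)$ lies in the closure of exactly one of the two annular components $A_1$ (attached to $\partial \ala$, degree $n$) and $A_2$ (attached to $\partial \dl$, degree $1$) of $g^{-1}\left(A(\partial \tl,\gamma)\right)$, so the closed sets $\overline{A_1}\cap g^{-1}(\gamma)$ and $\overline{A_2}\cap g^{-1}(\gamma)$ cover the connected set $g^{-1}(\gamma)$ and therefore must meet, and they can only meet at $\cl$. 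This forces one of the two local ``inner'' sectors at $\cl$ into $A_1$ and the other into $A_2$, giving the figure-eight configuration of two Jordan curves crossing exactly at $\cl$ with degrees $n$ and $1$, as required.
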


\begin{proof}
We first notice that given any Jordan curve in $ A(\partial \tl,\partial \bl)$ 
all preimages should be located either in $A^{\textrm {in} }$ or $A^{\textrm {out}}$ since $T_\lambda$, $\ala$ and $\bl$ are mapped outside  $A(\partial \tl,\partial \bl)$. Moreover, by \thref{munnum}\textit{(vi)} any Jordan curve in $ A(\partial \tl,\partial \bl)$ should have  preimage(s) in $A^{\textrm {in} }$ as well as in $A^{\textrm {out}}$. 

Let $\gamma \in  A(\partial \tl,\partial \bl)$ be a Jordan curve surrounding the origin. First we study the topology of the preimage(s) of $\gamma$ in $A^{\textrm {in} }$. By \thref{munnum}, $\gamma$ has exactly $d$ preimages in $A^{\textrm {in} }$.  Let $\gamma_0$ be one of the preimages of $\gamma$ in $A^{\textrm {in} }$. The goal is to show that in fact $\gamma_0$ is mapped by $\tl$ with degree $d$, so there are no other preimages whatsoever. Observe that 
${\textrm Int\left(\gamma_0\right)}$ should contain either a pole, a zero, or $z=0$, otherwise it cannot be mapped to $\gamma$ which surrounds $z=0$. Therefore, $\gamma_0$  surrounds $z=0$. Take the annulus  $A(\gamma, \partial \bl)$ and consider its preimage in $A^{\textrm {in} }$. Since the only preimage of  $\partial \bl$ in $\overline{A^{\textrm {in} }}$ is $\partial \tl$ and $A^{\textrm {in} }$ contains no critical point, the preimage should be the annulus $A(\partial \tl, \gamma_0)$. The map $\jl|_{A(\partial \tl, \gamma_0)}$ is proper of degree $d$ since $\jl$ maps $\partial \tl$ onto  $\partial \bl$ with degree $d$. We conclude that $\gamma_0$ is mapped with degree $d$ onto $\gamma$,  as desired.

The proof of statements \textit{(i)}-\textit{(iii)} about the topology of the preimage(s) of $\gamma$ in $A^{\textrm {out}}$ is analogous to the one of \cite[Proposition 3.3]{Can2} by using that $\jl$ is conjugate to the Blaschke product $R_{\theta, b}$ in $A^{\textrm {out}}$ (see \thref{prop:conjblas}).

\end{proof}

\begin{remark}
It follows from \thref{prop:preimcurves} that each Fatou component different from $\tl$ and $\bl$ which surrounds $z=0$ (and so it contains a Jordan curve which surrounds $z=0$) has exactly two preimages which also surround $z=0$. One of them lies in $A^{\textrm {in} }$ and the other lies in $A^{\textrm {out}}$. 
\end{remark}

The following lemma shows that Fatou components which do not surround the origin do not have preimages which surround it. 

\begin{lemma}
\thlabel{nusurround}

Let $\jl$ satisfying (a), (b), (c), and (d). Let $\lambda\neq 0$, $\modulu<\sefu$. Let $U$ be a multiply connected Fatou component. If $U$ does not surround $z=0$, then no component of its preimage $\jl^{-1}(U)$ surrounds $z=0$. 
\end{lemma}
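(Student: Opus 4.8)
The plan is to argue by contradiction, reducing the lemma to the implication: \emph{if some connected component $V$ of $\jl^{-1}(U)$ surrounds $z=0$, then $U$ itself surrounds $z=0$.} First I would observe that $V$ can equal none of $\tl$, $\ala$, $\bl$, $\dl$: by \thref{munnum} and \thref{prop:continfty} the map $\jl$ sends $\tl$ onto $\bl$, $\bl$ onto itself, and $\ala$ onto $\tl$, all simply connected, so if $V$ were one of $\tl,\ala,\bl$ the image $U$ would be simply connected, contradicting that $U$ is multiply connected; and $\dl$ does not surround $z=0$ whereas $V$ does. Consequently $V$ contains none of the zeros of $\jl$ (which by \thref{munnum} are $w_{\la}\in\dl$ together with the $n+d$ points $z_{\la,\xi}\in\ala$) nor any of its poles, so I may fix a Jordan curve $\delta\subset V$ surrounding $z=0$ along which $\jl$ has neither zeros nor poles.

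Next I would compute, via the argument principle, the winding number of the closed curve $\sigma:=\jl(\delta)\subset U$ about $z=0$. Since every Fatou component other than $\bl$ lies in $\textrm{Int}(\partial\bl)$ (recall $\partial\bl$ is a quasicircle by \thref{prop:continfty}), we have $\delta\subset\textrm{Int}(\partial\bl)$ and hence $\textrm{Int}(\delta)\subset\textrm{Int}(\partial\bl)$. As the only preimages of $\infty$ are $z=0$, of multiplicity $d$, together with $n+1$ further poles lying in $\bl$ (see the proof of \thref{munnum}), the unique pole of $\jl$ inside $\delta$ is $z=0$, of order $d$. Therefore the winding number of $\sigma$ about $z=0$ equals $Z_0-d$, where $Z_0$ is the number of zeros of $\jl$ inside $\delta$, counted with multiplicity.

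To evaluate $Z_0$ I would use that $\ala$ is a doubly connected set surrounding $z=0$ which contains exactly the $n+d$ zeros $z_{\la,\xi}$, and that $\ala$ and $\delta$ are disjoint, so one lies in the bounded complementary region of the other. If $\ala\subset\textrm{Ext}(\delta)$, then $\textrm{Int}(\delta)$ is contained in the bounded component of $\hat{\mathbb{C}}\setminus\ala$, namely $\tl\cup A^{\textrm{in}}$, which contains no zero of $\jl$ (neither the $z_{\la,\xi}$ nor $w_{\la}\in\dl\subset A^{\textrm{out}}$), giving $Z_0=0$. If instead $\ala\subset\textrm{Int}(\delta)$, then all $n+d$ zeros $z_{\la,\xi}$ lie inside $\delta$, so $Z_0\ge n+d$. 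Since $n,d\ge 2$, in either case $Z_0\neq d$, and hence the winding number $Z_0-d$ is nonzero.

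Finally, a nonzero winding number of $\sigma$ about $z=0$ forces $z=0$ to lie in a bounded complementary component of $\sigma$; since $\sigma\subset U$ and $z=0\notin U$, the connected component of $\hat{\mathbb{C}}\setminus U$ containing $z=0$ is then itself bounded, i.e. $U$ surrounds $z=0$, contradicting the hypothesis. I expect the main obstacle to be the careful bookkeeping of the location of the zeros and poles of $\jl$ relative to $\delta$ (in particular the dichotomy $\ala\subset\textrm{Int}(\delta)$ versus $\ala\subset\textrm{Ext}(\delta)$ and the placement of $w_{\la}$), together with the topological justification of the last step, since $\sigma$ need not be a Jordan curve and the passage from a nonzero winding number to ``$U$ surrounds $z=0$'' must be made precise using that $\partial\sigma$-level complementary components inherit their winding from $\sigma\subset U$.
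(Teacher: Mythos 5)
Your proof is correct, but it takes a genuinely different route from the paper's. The paper argues in one stroke with filled-in sets: it puts $U'=\textrm{Bdd}(U)$, takes the preimage component $V'$ of $U'$ containing $V$, observes that $V'\subset A^{\textrm{in}}\cup A^{\textrm{out}}$ and hence contains at most one critical point, and concludes from the Riemann--Hurwitz formula (\thref{corocoro}) that $V'$ is simply connected; since $V$ surrounds the origin, this forces $z=0\in V'$, which is absurd because $z=0$ maps to $\infty$ while $U'=\jl(V')$ is bounded. You instead lean on the zero/pole census of \thref{munnum} and the argument principle: after excluding $V\in\{\tl,\ala,\bl,\dl\}$ (correctly, using that $U$ is multiply connected), the winding number of $\jl(\delta)$ about $0$ equals $Z_0-d$, and your dichotomy $\ala\subset\textrm{Ext}(\delta)$ versus $\ala\subset\textrm{Int}(\delta)$ gives $Z_0=0$ or $Z_0\in\{n+d,\,n+d+1\}$, never $d$. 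Both arguments rest on the same structural input from \thref{munnum} (the paper uses the location of the critical points, you use the location of the zeros and poles), but the paper's version is shorter and sidesteps the two point-set facts you rightly flag as needing care: the existence of a Jordan curve $\delta\subset V$ around $0$, and the passage from nonzero winding of the (non-Jordan) curve $\sigma$ to ``$U$ surrounds $0$''; the latter is easily completed by noting that the component of $\hat{\mathbb{C}}\setminus\sigma$ containing $\infty$ carries winding number $0$, so the component of $\hat{\mathbb{C}}\setminus U$ containing $0$ cannot reach $\infty$. In exchange, your computation yields more quantitative information: the winding numbers $-d$ and $n$ (or $n+1$) you obtain are exactly the mapping degrees of the preimages of curves taken in $A^{\textrm{in}}$ and $A^{\textrm{out}}$ that the paper records separately in \thref{prop:preimcurves}.
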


\begin{proof}

Suppose that $U$ does not surround $z=0$ and let $V$ be a preimage of $U$ which surrounds $z=0$. Let $U'=\textrm{Bdd}(U)$ and $V'$ the preimage of $U'$ which contains $V$. Observe that $U'\subset A(\partial \tl,\partial \bl)$. Since $V'\subset A^{\textrm {in} } \cup A^{\textrm {out}}$, it can contain at most one critical point. Since $\jl|_{V'}: V'\rightarrow U'$ is proper and V' contains at most one critical point, it follows from the Riemann Hurwitz formula that $V'$ is simply connected (compare \thref{corocoro}). Since $V$ surrounds the origin, then $z=0$ lies in $V'$. However, this is impossible since $z=0$ is mapped to $\infty$ and $U'$ is bounded.

\end{proof}

\thref{munnum} tells us that for $\modulu$ small enough the map $\jl$ is essentially uni-critical since all critical points except $\cl$ belong to $A_{\la}(\infty)$. Up to now, however, we have not imposed any particular dynamical behaviour for $\cl$. With the aim of proving the main results of this paper from now on we restrict ourselves to parameters for which the free critical point $\cl$ belongs to $A_{\la}(\infty)$ (sometimes the term {\it captured parameters} is used). 

Under this assumption, \thref{munnum} implies that $\nu_\lambda \in \mathcal{A}_{\la}(\infty) \setminus \left( \bl\cup \tl\cup\dl\cup\ala\right)$. We further assume that $\cl$ belongs to a Fatou component which is an eventual preimage of $\ala$ that surrounds $z=0$. The following result gives relevant notation and determines a partition of the dynamical plane (compare Figure \ref{pozanr2}) that will be extremely  useful to study achievable connectivities of Fatou components.

\begin{theorem}
Let $\jl$ satisfying (a), (b), (c), and (d). Let $\lambda\neq 0$, $\modulu<\sefu$. Assume that $\cl \in \uc$, where $\uc$ is a Fatou component which is eventually mapped onto $\ala$ and surrounds $z=0$. Then, $\uc$ is triply connected and $\uc \subset A^{\textrm {out}}$. Moreover, the following statements hold. 

\begin{enumerate}[label=(\roman*)]
\item{The set $\overline{\uc}$ bounds an open disk $\mathcal{U}_1$ which is mapped with degree $1$ onto the open disk $\mathcal{V}_n \cup \overline{\tl}$, where $\mathcal{V}_n$ is the annulus bounded by $\partial \tl$ and $\overline{\jl(\uc)}$. In particular, $w_\lambda \in \mathcal{U}_1$.}
\item{The annulus $\mathcal{U}_{n+1}$ bounded by $\overline{\uc}$ and $\partial \bl$ is mapped with degree $n+1$ onto the annulus $\mathcal{V}_{n+1}$ bounded by  
$\overline{\jl(\uc)}$ and $\partial \bl$.}
\item{The annulus $\mathcal{U}_n$ bounded by $\overline{\ala}$  and $\overline{\uc}$ is mapped with degree $n$ onto the annulus $\mathcal{V}_n$ bounded by $\partial \tl$ and $\overline{\jl(\uc)}$.}
\item{The annulus $\mathcal{U}_d$ bounded by $\partial \tl$ and $\overline{\ala}$ (i.e., the annulus $A^{\textrm {in} }$)  is mapped with degree $d$ onto the annulus $A(\partial \tl,\partial \bl)$.}
\item{The Fatou component $\uc$ lies in $\mathcal{V}_{n+1}$ and it is mapped under $\jl$ with degree $n+1$ onto its image. In particular, $\uc$ surrounds $\jl(\uc)$.}
\end{enumerate}
\thlabel{dumnezeu}

\end{theorem}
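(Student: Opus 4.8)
The plan is to locate the critical points first, then determine the connectivity of $\uc$, then read off the three boundary curves of $\uc$ from \thref{prop:preimcurves}, and finally assemble the four annuli together with the degrees of the restricted maps, leaving statement \textit{(v)} for last.

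\textbf{Critical points, location and connectivity of $\uc$.} By \thref{munnum} the $2(n+d)$ critical points of $\jl$ split into $n$ points in $\bl$, the point $z=0$ of multiplicity $d-1$ inside $\tl$, the $n+d$ simple points $c_{\la,\xi}$ inside $\ala$, and the single free point $\cl$, which by \thref{munnum}\textit{(iv)} lies in $A^{\textrm{out}}\setminus\dl$; thus $\cl$ is the \emph{unique} critical point in $A^{\textrm{out}}$. Since $\uc$ is a Fatou component containing $\cl$ and $A^{\textrm{out}}$ is bounded by the Julia curves $\partial\bl$ and the outer boundary of $\ala$, the component $\uc$ cannot cross these curves, so $\uc\subset A^{\textrm{out}}$ and $\uc$ contains exactly one, simple, critical point. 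Moreover $\uc$ is strictly preperiodic (it is eventually mapped onto $\ala$, and $\ala\to\tl\to\bl\to\bl$), so the components $\jl^k(\uc)$ are pairwise distinct. To get the connectivity I would first show $\jl(\uc)$ is doubly connected by downward induction along $\uc\to\jl(\uc)\to\cdots\to\ala$: the terminal component $\ala$ is doubly connected by \thref{munnum}\textit{(ii)}, and each intermediate component carries no critical point (the free one sits in $\uc$, the others lie in $\bl,\tl,\ala$, all distinct from the intermediate components), so \thref{corocoro}\textit{(i)} propagates double connectivity back to $\jl(\uc)$. Then \thref{RH} applied to $\jl\colon\uc\to\jl(\uc)$ with $m_V=2$ and $r=1$ gives $m_{\uc}-2=1$, i.e.\ $\uc$ is triply connected.

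\textbf{The three boundary curves.} Write $\jl(\uc)=A(\delta^{\textrm{in}},\delta^{\textrm{out}})$, both curves surrounding $z=0$ inside $A(\partial\tl,\partial\bl)$. Since $\cl\in\uc$, the free critical value $\jl(\cl)$ lies in the annulus $\jl(\uc)$, so $\jl(\cl)\in\textrm{Int}(\delta^{\textrm{out}})$ and $\jl(\cl)\in\textrm{Ext}(\delta^{\textrm{in}})$. Hence \thref{prop:preimcurves}\textit{(i)} applied to $\delta^{\textrm{out}}$ gives a single $A^{\textrm{out}}$-preimage $\gamma^{\textrm{out}}$ surrounding $z=0$ of degree $n+1$, while \thref{prop:preimcurves}\textit{(iii)} applied to $\delta^{\textrm{in}}$ gives two disjoint $A^{\textrm{out}}$-preimages: $\gamma^{\textrm{in}}$ surrounding $z=0$ (not $w_\la$) of degree $n$, and $\gamma_w$ surrounding $w_\la$ (not $z=0$) of degree $1$, with $\gamma^{\textrm{in}}$ not surrounding $\gamma_w$. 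As $\partial\uc$ maps into $\delta^{\textrm{in}}\cup\delta^{\textrm{out}}$ and $\uc\subset A^{\textrm{out}}$, these are exactly the three boundary components of $\uc$. Since the region between $\gamma^{\textrm{out}}$ and $\partial\bl$ is critical-point free and maps with the degree attained on $\partial\bl$, the curve $\gamma^{\textrm{out}}$ is the outer boundary, and $\gamma^{\textrm{in}}$ the inner one.

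\textbf{The annuli and degrees.} The complementary annuli are then $\mathcal{U}_{n+1}=A(\gamma^{\textrm{out}},\partial\bl)$, $\mathcal{U}_n=A(\overline{\ala},\overline{\uc})$, $\mathcal{U}_d=A^{\textrm{in}}$, and $\mathcal{U}_1=\textrm{Int}(\gamma_w)$. None of the first three meets $\cl$ (the only critical point in $A^{\textrm{out}}$, which lies in $\uc$) and $A^{\textrm{in}}$ is critical-point free, so each restricted map is an unbranched proper covering of degree equal to the one attained on a boundary curve: degree $n+1$ on $\partial\bl$ for $\mathcal{U}_{n+1}\to\mathcal{V}_{n+1}$, proving \textit{(ii)}; degree $n$ for $\mathcal{U}_n\to\mathcal{V}_n$, using that the outer boundary of $\ala$ maps to $\partial\tl$ with degree $n$ (cf.\ \thref{prop:conjblas} and \thref{munnum}), proving \textit{(iii)}; and degree $d$ for $\mathcal{U}_d=A^{\textrm{in}}$ by \thref{munnum}\textit{(vi)}, proving \textit{(iv)}. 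The disk $\mathcal{U}_1=\textrm{Int}(\gamma_w)$ contains $w_\la$ and no critical point, so $\jl\colon\mathcal{U}_1\to\textrm{Int}(\delta^{\textrm{in}})=\mathcal{V}_n\cup\overline{\tl}$ is proper of degree $1$, i.e.\ conformal with $w_\la\mapsto0$, which is \textit{(i)}.

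\textbf{Statement (v) and the main obstacle.} Finally $\jl\colon\uc\to\jl(\uc)$ has degree $n+1$, because $\delta^{\textrm{out}}$ is covered only by $\gamma^{\textrm{out}}$. The remaining, and most delicate, point is to show $\uc\subset\mathcal{V}_{n+1}$, i.e.\ that $\uc$ surrounds its image rather than being surrounded by it; naive orientation bookkeeping is obscured by the fact that $A^{\textrm{in}}$ wraps around the whole annulus $A(\partial\tl,\partial\bl)$ with degree $d$. I would settle this by a modulus comparison: $\mathcal{U}_{n+1}$ is the component of $\jl^{-1}(\mathcal{V}_{n+1})$ attached to $\partial\bl$, and since $\jl$ is conjugate to a Blaschke product expanding on $\partial\bl$ (\thref{prop:conjblas}) one has $\mathcal{U}_{n+1}\subset\mathcal{V}_{n+1}$ with $\operatorname{mod}(\mathcal{U}_{n+1})=\operatorname{mod}(\mathcal{V}_{n+1})/(n+1)$; thus $\gamma^{\textrm{out}}$ lies strictly between $\delta^{\textrm{out}}$ and $\partial\bl$, giving $\delta^{\textrm{out}}\subset\textrm{Int}(\gamma^{\textrm{out}})$. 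As $\uc$ is connected and disjoint from the Julia curve $\delta^{\textrm{out}}$, it lies entirely on one side of $\delta^{\textrm{out}}$; were it inside, its outer boundary $\gamma^{\textrm{out}}$ would also lie inside $\delta^{\textrm{out}}$, a contradiction. Hence $\uc\subset\textrm{Ext}(\delta^{\textrm{out}})\cap\textrm{Int}(\partial\bl)=\mathcal{V}_{n+1}$, so $\uc$ surrounds $\jl(\uc)$, completing \textit{(v)}.
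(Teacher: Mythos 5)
Your treatment of the connectivity claim and of statements \textit{(i)}--\textit{(iv)} is correct and is essentially the paper's argument: triple connectivity comes from propagating double connectivity backwards from $\ala$ via \thref{corocoro} and one application of Riemann--Hurwitz at the simple critical point $\cl$, and the degrees on the four annuli come from properness together with the degree attained on a boundary curve. Your identification of the three boundary components of $\uc$ by applying \thref{prop:preimcurves} to the two boundary curves of $\jl(\uc)$ (cases \textit{(i)} and \textit{(iii)}, according to the position of the critical value $\jl(\cl)$) is a clean equivalent of the paper's direct topological identification of the curves $\gamma_c^{\textrm{in}}$, $\gamma_c^{\textrm{out}}$, $\gamma_c^{1}$, with the added benefit of producing the boundary degrees $n+1$, $n$, $1$ at once.

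Statement \textit{(v)} is where your argument, as written, has a gap. The inclusion $\mathcal{U}_{n+1}\subset\mathcal{V}_{n+1}$ is not something you may simply assert: it is equivalent to \textit{(v)} itself (your final paragraph shows exactly this), and neither ``$\jl$ is expanding on $\partial \bl$'' nor the identity $\operatorname{mod}(\mathcal{U}_{n+1})=\operatorname{mod}(\mathcal{V}_{n+1})/(n+1)$ implies it --- the covering identity holds no matter which of the two annuli sits inside the other. The modulus comparison you announce does work, but it must be run as a proof by contradiction. Since $\uc$ is strictly preperiodic, $\uc$ and $\jl(\uc)$ are disjoint, and both surround $z=0$, so exactly one of the following holds: either $\jl(\uc)\subset\textrm{Bdd}(\uc)$, which is \textit{(v)}; or $\uc\subset\textrm{Bdd}(\jl(\uc))$, in which case $\mathcal{V}_{n+1}$ is an essential sub-annulus of $\mathcal{U}_{n+1}$, and monotonicity of the modulus (Gr\"otzsch inequality) gives $\operatorname{mod}(\mathcal{V}_{n+1})\le\operatorname{mod}(\mathcal{U}_{n+1})=\operatorname{mod}(\mathcal{V}_{n+1})/(n+1)$, which is impossible because $0<\operatorname{mod}(\mathcal{V}_{n+1})<\infty$ and $n+1\ge 3$. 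With this repair your route is genuinely different from the paper's, which rules out the second alternative dynamically: $\mathcal{V}_{n+1}\subset\mathcal{U}_{n+1}$ would make the annulus $\mathcal{U}_{n+1}$ forward invariant under $\jl$, forcing a periodic Fatou component other than $\bl$, and this contradicts the fact that for these (captured) parameters every critical orbit converges to $z=\infty$. The corrected modulus argument is purely conformal-geometric and does not invoke the classification of periodic Fatou components, while the paper's argument avoids moduli altogether at the price of leaning on the captured-critical-orbit hypothesis.
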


\begin{figure}
\hfill
\subfigure[The partition with respect to $\uc$ and $\ala$.]{\includegraphics[width=7cm]{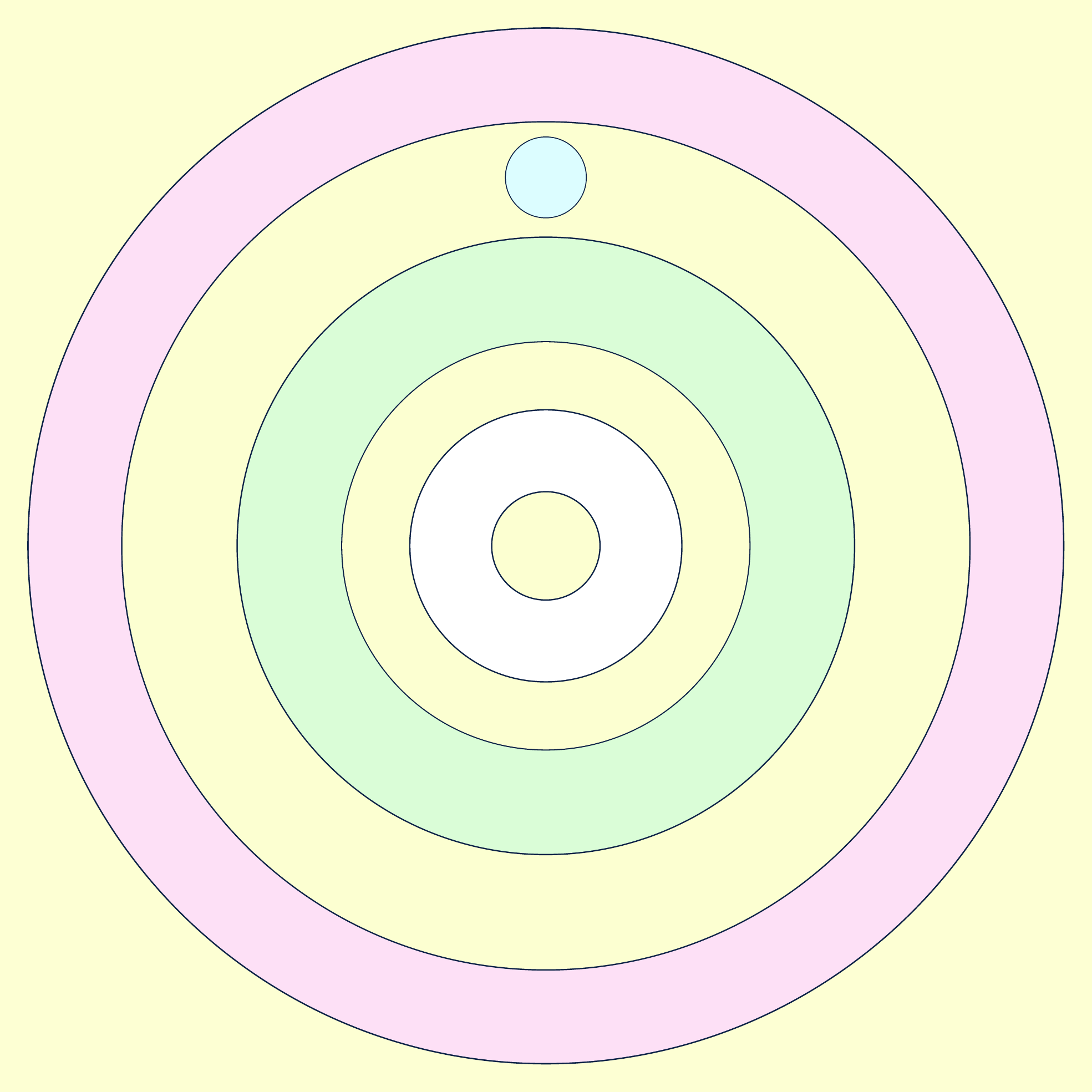}}
\put(-105,96){$\tl$}
\put(-152,144){$\uc$}
\put(-129,119){$\ala$}
\put(-105,163){$\mathcal{U}_{1}$}
\put(-38,180){$\bl$}
\put(-22,96){$\mathcal{U}_{n+1}$}
\put(-58,96){$\mathcal{U}_{n}$}
\put(-88,96){$\mathcal{U}_{d}$}
\hfill
\subfigure[The partition with respect to $\jl(\uc)$.]{\includegraphics[width=7cm]{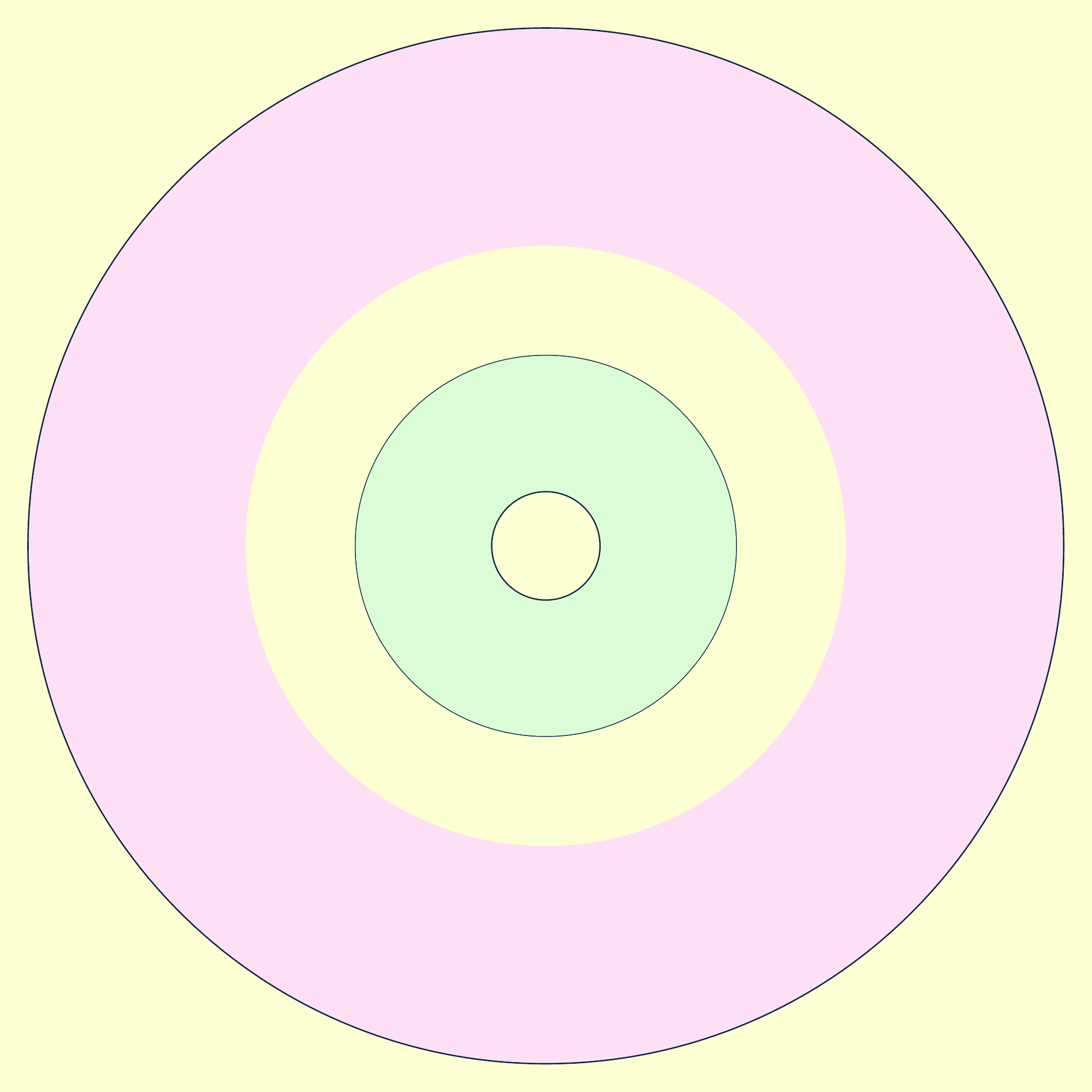}}
\put(-105,96){$\tl$}
\put(-118,137){$\jl(\uc)$}
\put(-38,180){$\bl$}
\put(-180,96){$\mathcal{V}_{n+1}$}
\put(-130,96){$\mathcal{V}_{n}$}
\hfill
\caption{Partitions of the dynamical plane introduced in \thref{dumnezeu}.}
\label{pozanr2}
\end{figure}
\begin{remark}
Statement \textit{(iv)} in \thref{dumnezeu} coincides with statement \textit{(vi)} of \thref{munnum}. We use this double naming ($\mathcal{U}_d$ and $A^{\textrm {in} }$) in order to uniformize notation in what follows. Notice that every set $\mathcal{U}_i$, $i=d, \, n,\, n+1$, is mapped onto its image with degree $i$. This notation is particularly useful in Section \ref{sec:Theorem_AB}. Also, notice that in order to simplify notation we avoid indicating the dependence of $\uc$ and $\mathcal{U}_i,\,i=1, \, d, \, n,\, n+1$, with respect to the parameter $\lambda$.  
\end{remark}

\begin{proof}

By the Riemann-Hurwitz formula, the iterated preimages of $\ala$ are doubly connected unless they contain a critical point. Under our hypothesis, this occurs precisely at $\uc$ (since the only critical point eventually mapped in $A_\lambda$ is $\nu_\lambda$). A direct application of the Riemann-Hurwitz formula implies that, since $\nu_\lambda$ is a simple critical point, $\uc$ is triply connected.  Moreover, $\uc\subset A^{\textrm {out}}$ since $\nu_\lambda \in A^{\textrm {out}}$. This proves the first part of the statement.

From above $\partial \uc$ has three components. Since $\uc$ separates $z=0$ from $z=\infty$, there should be exactly two components of $\partial \uc$ surrounding $z=0$. We denote them by $\gamma_c^{\textrm{in}}$ and $\gamma_c^{\textrm {out}}$, where $\gamma_c^{\textrm{in}}\subset \textrm{Int}(\gamma_c^{\textrm {out}})$. The other component of $\partial \uc$, denoted by $\gamma_c^1$, does not surround $z=0$. 

Set $\mathcal{U}_1=\textrm{Int}(\gamma_c^1)$. Since $\gamma_c^1$ is mapped onto a component of $\partial \jl(\uc)$, $\mathcal{U}_1$ is mapped either to the bounded or the unbounded component of the complement of $\jl(\uc)$ (which is an annulus by hypothesis). However, since all  poles are in $\bl\cup\tl$, then $\mathcal{U}_1$ should be mapped onto the bounded component of  $\jl(\uc)$. Therefore, $\mathcal{U}_1$ contains the zero $w_{\la}$ (and no other preimages of $z=0$). We conclude that $\jl|_{\mathcal{U}_1}$ has degree $1$. In particular, $\gamma_1$ is mapped onto its image with degree $1$. This proves \textit{(i)}.

Let $\mathcal{U}_{n+1}$ be the annulus bounded by $\overline{\uc}$ and $\partial \bl$,  and let $\mathcal{V}_{n+1}=\jl (\mathcal{U}_{n+1})$. By construction, $\mathcal{V}_{n+1}$ is the annulus bounded by $\overline{\jl(\uc)}$ and $\partial \bl$. It is immediate that the map $\jl|_{\mathcal{U}_{n+1}}:\mathcal{U}_{n+1}\rightarrow \mathcal{V}_{n+1}$ is proper. Since the degree is accomplished on the boundaries and $\partial \bl$ is mapped onto itself with degree $n+1$, $\mathcal{U}_{n+1}$ is mapped onto $\mathcal{V}_{n+1}$ with degree $n+1$. This proves \textit{(ii)}. The proof of statement \textit{(iii)} is similar and statement \textit{(iv)} was already proven in \thref{munnum}.  


Finally, we prove statement \textit{(v)} by contradiction. Assume that $\uc$ does not lie in $\mathcal{V}_{n+1}$. Then, either $\uc$ maps onto itself (which is impossible) or $\mathcal{V}_{n+1}\subset \mathcal{U}_{n+1}$. This would imply that $\mathcal{U}_{n+1}$ is mapped under $\jl$ on itself and, hence, there exists a periodic Fatou component different from $\mathcal{A}_{\la}(\infty)$. This is impossible since, by assumption, the orbits of all critical points converge to $z=\infty$.
\end{proof}

\begin{remark}
\thlabel{haiininterior}
Under the assumptions of \thref{dumnezeu}, if $U$ is a Fatou component  which surrounds $z=0$ and lies in $\mathcal{U}_{n+1}$ or $\mathcal{U}_n$, then it follows from \thref{prop:conjblas} that its image lies in $\textrm{Bdd}(U)$. Indeed, $\mathcal{U}_n$ and $\mathcal{U}_{n+1}$ are contained in $A^{\textrm {out}}$ (see Figure \ref{pozanr2}), and $A_{\textrm {out}}$ belongs to the region where the dynamics are conjugate to the ones of a Blaschke product. 

\end{remark}

As it will become clear in the next sections devoted to prove the main results of this paper, the presence of Fatou components with high connectivity in the dynamical plane is based on taking special iterated preimages of $\uc$. With this in mind we end the section by stating the following corollary of \thref{prop:preimcurves} and \thref{dumnezeu}.

\begin{corollary}
\thlabel{chichicea}
Let $\jl$ satisfying (a), (b), (c), and (d). Let $\lambda\neq 0$, $\modulu<\sefu$. Assume that $\cl \in \uc$, where $\uc$ is an iterated preimage of $\ala$ which surrounds $z=0$. Let $W$ be a Fatou component which surrounds $z=0$, different from $\tl$ and $\ala$. Then, the following statements hold.
\begin{enumerate}[label=(\roman*)]
\item If $W \subset \mathcal{V}_{n+1}$, then it has a unique preimage in $\mathcal{U}_d$, which surrounds $z=0$ and is mapped under $\jl$ onto $W$ with degree $d$, and a unique preimage in  $\mathcal{U}_{n+1}$, which surrounds $z=0$ and is mapped under $\jl$ onto $W$ with degree $n+1$.
\item If $W \subset \mathcal{V}_{n}$, then it has a unique preimage in $\mathcal{U}_d$, which surrounds $z=0$ and is mapped under $\jl$ onto $W$ with degree $d$, and two further preimages. One lies in $\mathcal{U}_{n}$, surrounds $z=0$ and is mapped under $\jl$ onto $W$ with degree $n$. The other one lies in $\mathcal{U}_1$,  does not surround $z=0$, and is mapped under $\jl$ onto $W$ with degree $1$.
\end{enumerate}
\end{corollary}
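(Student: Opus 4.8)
The plan is to reduce the statement about the Fatou component $W$ to the analysis of a single core curve carried out in \thref{prop:preimcurves}, and then to read off the location and the degree of each preimage component from the partition established in \thref{dumnezeu}. First I would fix a Jordan curve $\gamma\subset W$ surrounding $z=0$; such a curve exists because $W$ surrounds the origin, and since by hypothesis $W$ lies in $\mathcal{V}_{n+1}$ or $\mathcal{V}_n$ we have $\gamma\subset A(\partial\tl,\partial\bl)$, so that \thref{prop:preimcurves} applies. The key preliminary observation is that the position of the free critical value $\jl(\cl)$ relative to $\gamma$ is dictated by which of the two annuli $\mathcal{V}_{n+1}$, $\mathcal{V}_n$ contains $W$. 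Indeed $\jl(\cl)\in\jl(\uc)$, and $\jl(\uc)$ is precisely the Fatou component separating $\mathcal{V}_n$ from $\mathcal{V}_{n+1}$ (see \thref{dumnezeu}). Hence if $W\subset\mathcal{V}_{n+1}$ then $\gamma$ surrounds $\jl(\uc)$ and so $\jl(\cl)\in\textrm{Int}(\gamma)$, placing us in case \textit{(i)} of \thref{prop:preimcurves}; whereas if $W\subset\mathcal{V}_n$ then $\gamma$ does not surround $\jl(\uc)$ and so $\jl(\cl)\in\textrm{Ext}(\gamma)$, placing us in case \textit{(iii)}.

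Next I would translate the description of $\jl^{-1}(\gamma)$ given by \thref{prop:preimcurves} into a description of the components of $\jl^{-1}(W)$. For any connected component $V$ of $\jl^{-1}(W)$ the restriction $\jl|_V\colon V\to W$ is proper, and its degree equals the sum of the degrees $\deg(\jl|_\delta)$ over the components $\delta$ of $\jl^{-1}(\gamma)$ contained in $V$ (by counting preimages of a point of $\gamma$). Each such curve $\delta$ lies in exactly one of $A^{\textrm{in}}=\mathcal{U}_d$ or $A^{\textrm{out}}$, and inside $A^{\textrm{out}}$ in exactly one of $\mathcal{U}_n$, $\mathcal{U}_{n+1}$, $\mathcal{U}_1$; the correct sub-annulus is forced by matching the image of that sub-annulus (computed in \thref{dumnezeu}) with the annulus containing $W$. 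Concretely: the preimage curve in $\mathcal{U}_d$ surrounds $z=0$ and maps with degree $d$, since $\jl(\mathcal{U}_d)=A(\partial\tl,\partial\bl)\supset W$ with degree $d$, giving in both cases a unique preimage of $W$ in $\mathcal{U}_d$ that surrounds the origin with degree $d$. In case \textit{(i)} the remaining curve lies in $A^{\textrm{out}}$, surrounds the origin and maps with degree $n+1$; since $\jl(\mathcal{U}_{n+1})=\mathcal{V}_{n+1}\supset W$ while $\jl(\mathcal{U}_n)=\mathcal{V}_n$ and $\jl(\mathcal{U}_1)=\mathcal{V}_n\cup\overline{\tl}$ are disjoint from $\mathcal{V}_{n+1}$, this curve must lie in $\mathcal{U}_{n+1}$, yielding the unique degree $(n+1)$ preimage. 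In case \textit{(iii)} the two curves $\gamma_0^{-1}$ and $\gamma_w^{-1}$ lie in $A^{\textrm{out}}$: the curve $\gamma_0^{-1}$ surrounds $z=0$ with degree $n$ and, since now $W\subset\mathcal{V}_n=\jl(\mathcal{U}_n)$, lies in $\mathcal{U}_n$; the curve $\gamma_w^{-1}$ surrounds $w_{\la}\in\mathcal{U}_1$ with degree $1$ and lies in $\mathcal{U}_1$, giving the two further preimages described.

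Finally I would close the argument with a degree count, which simultaneously yields uniqueness and the absence of further preimages. Since $\jl$ has global degree $n+d+1$, the local degrees of all components of $\jl^{-1}(W)$ must sum to $n+d+1$. In case \textit{(i)} the components found contribute $d+(n+1)=n+d+1$, and in case \textit{(iii)} they contribute $d+n+1=n+d+1$; in both cases the total is already attained, so the listed components are \emph{all} the preimages of $W$, each arising from the single corresponding component of $\jl^{-1}(\gamma)$ and therefore unique in its region. The step I expect to require the most care is the second one: rigorously matching each preimage curve with the correct sub-annulus $\mathcal{U}_i$ and checking that the degree realized on the curve equals the degree of $\jl$ on the whole preimage Fatou component, so that the components in $\mathcal{U}_d$, $\mathcal{U}_n$, $\mathcal{U}_{n+1}$ are genuinely annuli surrounding the origin (mapping with degrees $d$, $n$, $n+1$) while the component in $\mathcal{U}_1$ is a conformal copy not surrounding the origin. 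Both facts follow from properness together with the images of the $\mathcal{U}_i$ computed in \thref{dumnezeu}, but the bookkeeping must be carried out consistently for the two cases.
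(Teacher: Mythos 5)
Your proof is correct and follows essentially the same route as the paper, which presents this statement as a direct corollary of \thref{prop:preimcurves} and \thref{dumnezeu}: determine the position of the free critical value relative to a curve associated with $W$ (forced by whether $W\subset\mathcal{V}_{n+1}$ or $W\subset\mathcal{V}_n$, since $\jl(\cl)\in\jl(\uc)$), apply \thref{prop:preimcurves}, and sort the resulting preimage curves into $\mathcal{U}_d$, $\mathcal{U}_{n+1}$, $\mathcal{U}_n$, $\mathcal{U}_1$ by matching with the images computed in \thref{dumnezeu}. The only (harmless) differences are that you work with a core Jordan curve inside $W$ rather than with the two boundary components of $W$ surrounding $z=0$, which actually sidesteps any regularity question about $\partial W$ and keeps you squarely within the hypotheses of \thref{prop:preimcurves}, and that you certify uniqueness and the absence of further preimages by the global degree count $d+n+1=n+d+1$ rather than by the separation of the regions by $\ala$ and $\overline{\uc}$ — both variants are sound.
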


\section{Proofs of theorems A and B}\label{sec:Theorem_AB}
In this section we prove Theorem A and Theorem B. We first show that Fatou components which do not surround $z=0$ cannot be used to achieve higher connectivities.
\begin{lemma}
\thlabel{nusurrconstant}
Let $\jl$ satisfying (a), (b), (c), and (d). Let $\lambda\neq 0$, $\modulu<\sefu$. Assume also that $\cl \in \uc$. Let $V$ be a Fatou component which does not surround $z=0$. Then $V$ and all of its eventual preimages have the same connectivity.
\end{lemma}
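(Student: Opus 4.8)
The plan is to show that \emph{every} connected component of $\jl^{-1}(V)$ is mapped \emph{biholomorphically} onto $V$; the statement then follows by a straightforward induction on the number of iterated preimages. The first step is to record that no eventual preimage of $V$ surrounds $z=0$. When the component under consideration is multiply connected this is exactly \thref{nusurround}; when it is simply connected it cannot surround $z=0$ by definition, and moreover its preimages lie in $A^{\textrm{in}}\cup A^{\textrm{out}}$ and contain at most the critical point $\cl$, hence are simply connected by \thref{corocoro}(ii). The second preparatory observation is a census of the critical points: by \thref{prop:continfty} and \thref{munnum} the $2(n+d)$ critical points are distributed as the $n$ points in $\bl$, the point $z=0$ of multiplicity $d-1$ in $\tl$, the $n+d$ points $c_{\la,\xi}$ in $\ala$, and $\cl$ in $\uc$. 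Since each of $\bl,\tl,\ala,\uc$ either surrounds or contains $z=0$, any Fatou component that does \emph{not} surround $z=0$ is automatically free of critical points, and likewise free of poles (the only poles of $\jl$, namely $z=0$ and the preimages of $\infty$ inside $\bl$, lie in $\tl\cup\bl$).

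With this reduction, fix a preimage component $V'$ with $\jl(V')=V$ and set $\widehat{V}':=\textrm{Bdd}(V')$ and $\widehat{V}:=\textrm{Bdd}(V)$. Since $V'$ does not surround $z=0$, we have $0\notin\widehat{V}'$. The crucial topological step is to verify that the bounded complementary components (holes) of $V'$ contain no critical point: each critical point lies in one of the connected sets $\bl,\tl,\ala,\uc$, and each of these surrounds or contains $z=0$, so if such a set met a hole of $V'$ it would be entirely contained in that hole, forcing $z=0\in\widehat{V}'$ --- a contradiction. Hence $\widehat{V}'$ is a topological disk free of critical points and poles, so $\jl$ is holomorphic and bounded on $\overline{\widehat{V}'}$ with $\jl(\partial\widehat{V}')\subseteq\partial V$ (because $\partial\widehat{V}'\subseteq\partial V'$ and $\jl$ carries the Julia set into itself). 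A short argument identifies $W:=\jl(\widehat{V}')$ as $V$ together with some of its holes, so that $W\cap\partial V=\emptyset$ and $\jl:\widehat{V}'\to W$ is proper.

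Applying the Riemann--Hurwitz formula \thref{RH} to this proper map from the disk $\widehat{V}'$, with no critical points in the source, gives $-1=\deg(\jl|_{\widehat{V}'})\,(m_W-2)$, which forces both $\deg(\jl|_{\widehat{V}'})=1$ and $m_W=1$; in particular $W=\widehat{V}$ and $\jl:\widehat{V}'\to\widehat{V}$ is a biholomorphism. Restricting this biholomorphism to $V'=(\jl|_{\widehat{V}'})^{-1}(V)$ shows that $\jl:V'\to V$ is itself a biholomorphism, whence $V'$ and $V$ have equal connectivity. Feeding this into the induction over the eventual preimages --- legitimate because, by the first paragraph, none of them surrounds $z=0$, so every intermediate step again maps with degree $1$ --- yields the claim.

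The main obstacle I anticipate is the purely topological bookkeeping of the second paragraph: proving that no hole of $V'$ can engulf one of the origin-surrounding components $\bl,\tl,\ala,\uc$ (which is what guarantees $\widehat{V}'$ avoids all critical points), together with the verification that $\jl$ restricted to the filled region $\widehat{V}'$ is proper onto $\textrm{Bdd}(V)$. Both points hinge entirely on the location of $z=0$ relative to $V'$ and to the critical set; once they are secured, the Riemann--Hurwitz computation closes the argument at once.
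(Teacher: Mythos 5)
Your proposal is correct and takes essentially the same route as the paper's own proof: both rest on \thref{nusurround} (no eventual preimage surrounds $z=0$), the observation that $\textrm{Bdd}$ of a preimage component is then a topological disk free of critical points, and the conclusion that $\jl$ restricted to it is proper of degree $1$ (asserted directly in the paper, extracted in your write-up from the Riemann--Hurwitz formula), so that each preimage is conformally equivalent to its image and the connectivity propagates by induction. The details you add --- the census of critical points inside $\bl$, $\tl$, $\ala$, $\uc$, the hole-engulfing argument, and the explicit handling of simply connected components --- are precisely the bookkeeping the paper leaves implicit.
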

\begin{proof} 
Let $V$ be a Fatou component which does not surround $z=0$ and let $U$ be a preimage of it. By \thref{nusurround}, $U$  does neither surround $z=0$. It follows that $\textrm{Bdd}(U)$ does not contain any critical point. Therefore, the map $\jl|_{\textrm{Bdd}(U)} : \textrm{Bdd}(U)\rightarrow \textrm{Bdd}(V)$ is a proper map of degree $1$. We can conclude that $\jl|_{U} :U \rightarrow V$ is conformal, so $U$ and $V$ have the same connectivity.
\end{proof}
Next we give the form of the connectivities of Fatou components of $\jl$. We want to remark that not all these connectivities are achievable (see Theorem A).
\begin{prop}
\thlabel{bambampocpoc}
Let $\jl$ satisfying (a), (b), (c), and (d). Let $\lambda\neq 0$, $\modulu<\sefu$. Assume also that $\cl \in \uc$, where $\uc$ is an iterated preimage of $\ala$ which surrounds $z=0$. All Fatou components have connectivity 1, 2, or  $\kappa=(n+1)^{i}n^j d^\ell+2$, where $i,j,\ell \in \mathbb{N}$.
\end{prop}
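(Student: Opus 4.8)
The plan is to sort the Fatou components according to whether or not they surround $z=0$ and, in each class, to compute the connectivity by following the forward orbit and applying the Riemann--Hurwitz formula (\thref{RH}) to the quantity $g(U):=m_U-2$, where $m_U$ denotes the connectivity of $U$. Under the standing hypotheses one has $\cl\in\mathcal{A}_{\lambda}(\infty)$, so every critical point of $\jl$ lies in $\mathcal{A}_{\lambda}(\infty)$; hence $\mathcal{A}_{\lambda}(\infty)$ is the only basin and, by the No Wandering Domains Theorem, every Fatou component is preperiodic and eventually mapped onto $\bl$. I first record the connectivities of the skeleton pieces from \thref{munnum} and \thref{dumnezeu}: $\bl$, $\tl$ and $\dl$ are simply connected; $\ala$ is doubly connected, so $g(\ala)=0$; and $\uc$ is triply connected, so $g(\uc)=1=(n+1)^0n^0d^0$.

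First I would treat the components $U$ that surround $z=0$. If such a $U$ is simply connected it must be $\bl$, so suppose $U$ is multiply connected. The only preimages of $\bl$ are $\bl$ and $\tl$, and the only preimages of $\tl$ are $\ala$ and $\dl$ (\thref{munnum}); moreover $\dl$ has no surrounding preimage, since such a preimage would enclose $z=0$, which maps to $\infty\notin\dl$, so arguing as in \thref{nusurrconstant} every preimage of $\dl$ is simply connected. Consequently the orbit of a multiply connected surrounding $U$ reaches $\bl$ through $\ala$, i.e.\ $U$ is an iterated preimage of $\ala$ that surrounds $z=0$. On this family I run an induction on the number of iterates needed to reach $\ala$, using \thref{prop:preimcurves} and \thref{chichicea}: every surrounding preimage of a surrounding component lies in $\mathcal{U}_d$, $\mathcal{U}_n$ or $\mathcal{U}_{n+1}$ and maps onto it with degree $\delta\in\{d,n,n+1\}$. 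The essential fact is that among all these surrounding iterated preimages of $\ala$ the \emph{only} one carrying a critical point is $\uc$, which contains the simple point $\cl$ (the remaining critical points sit in $\bl$, $\tl$ and $\ala$). Thus Riemann--Hurwitz gives $g(U)=\delta\,g(\jl(U))$ whenever $U\neq\uc$, and $g(\uc)=\delta\,g(\jl(\uc))+1$. Since $g(\ala)=0$, every surrounding component strictly between $\uc$ and $\ala$ in the orbit (in particular $\jl(\uc)$) is an annulus, whence $g(\uc)=1$; and from $\uc$ onwards each successive surrounding preimage multiplies $g$ by one of $d,n,n+1$. Therefore every multiply connected surrounding component has $g(U)=0$ (connectivity $2$) or $g(U)=(n+1)^{i}n^{j}d^{\ell}$ (connectivity $(n+1)^{i}n^{j}d^{\ell}+2$), according to whether its orbit avoids or passes through $\uc$.

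Next I would treat the components $V$ that do not surround $z=0$. By \thref{nusurround} no preimage of such a component surrounds $z=0$, and, except for $\tl$, a non-surrounding component contains no critical point in the region it encloses (the critical points all surround or coincide with $z=0$), so by the argument of \thref{nusurrconstant} the restriction of $\jl$ to it is conformal. Following the orbit of $V$ forward, the maps are then degree one and the connectivity is unchanged until the orbit first meets either $\tl$ (so that $V$ has connectivity $1$) or a surrounding component $W\neq\bl$. In the latter case the transition is realised by the degree-one preimage in $\mathcal{U}_1$ produced by \thref{chichicea} (or, if $W=\ala$, directly), so $V$ inherits the connectivity of $W$, which is $1$, $2$ or $(n+1)^{i}n^{j}d^{\ell}+2$ by the preceding paragraph. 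This accounts for every Fatou component and establishes the claim.

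The step I expect to be most delicate is the bookkeeping that isolates the single free critical point $\cl$. One must verify precisely that, apart from $\uc$, no surrounding iterated preimage of $\ala$ contains a critical point, so that the recursion $g\mapsto\delta g$ is exact and the exponents $i,j,\ell$ simply count how many degree-$(n+1)$, degree-$n$ and degree-$d$ preimages have been taken since $\uc$; and, dually, that every non-surrounding component other than $\tl$ is mapped conformally, so that it merely inherits a connectivity already realised among the surrounding components. Both points reduce to the location of the critical set together with \thref{prop:preimcurves} and \thref{chichicea}; once they are secured, the product form $(n+1)^{i}n^{j}d^{\ell}+2$ is forced by Riemann--Hurwitz.
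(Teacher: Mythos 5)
Your proposal is correct and takes essentially the same route as the paper's proof: both reduce the problem to iterated preimages of $\ala$, dispose of components not surrounding $z=0$ via \thref{nusurround} and \thref{nusurrconstant} (so eventual preimages of $\dl$ give connectivity $1$ and critical-point-free preimages of $\ala$ give connectivity $2$), and then apply the Riemann--Hurwitz formula along surrounding preimages of $\uc$, where \thref{chichicea} restricts the degrees to $d$, $n$, $n+1$ and the single free critical point $\cl\in\uc$ supplies the base connectivity $3=1+2$. Your forward-orbit induction is simply a more detailed rendering of the paper's shorter backward-preimage bookkeeping, not a genuinely different method.
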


\begin{proof}

By \thref{munnum}, $\dl$ and all its eventual preimages have connectivity $1$ since none of them can contain critical points. Analogously, all eventual preimages of $\ala$ other than $\uc$ and its preimages have connectivity $2$ since none of them contain critical points (see \thref{corocoro}). Finally, we study the connectivity of the preimages of $\uc$. By \thref{nusurrconstant}, it suffices to study preimages of $\uc$ which surround $z=0$. It follows from the Riemann-Hurwitz formula that if $f:U\rightarrow V$ is proper of degree $q$ without critical points and $V$ has connectivity $p+2$, then $U$ has connectivity $qp+2$. By \thref{chichicea}, all preimages of $\uc$ which surround $z=0$ map to their images with degree $d, \, n$, or $n+1$. Since $\uc$ has connectivity $3=1+2$, the possible connectivities of the preimages of $\uc$ surrounding $z=0$ are of the form $\kappa=(n+1)^{i}n^j d^\ell+2$, where $i,j,\ell \in \mathbb{N}$.

\end{proof}

According to \thref{chichicea}, if $U$ is a iterative preimage of $\uc$ which surrounds $z=0$, its preimages which surround $z=0$ may be located in $\mathcal{U}_d$, $\mathcal{U}_n$ or $\mathcal{U}_{n+1}$. Next lemma shows that there are achievable upper bounds for the itineraries of iterated preimages of $\uc$. Recall from \thref{haiininterior} that if $U \subset \mathcal{U}_n$ is an iterated preimage of $\uc$, then either $\jl(U) \subset \mathcal{U}_n$ or $\jl (U) \subset \mathcal{U}_d$. Let $k\ge1$ be minimal such that $\jl^{k}(\uc)\subset \textrm{Bdd}(\ala) $. The first half of \thref{maxitaxi} shows that if $U \subset \mathcal{U}_n $ is a preimage of $\uc$ which surrounds $z=0$, then the itinerary of $U$ intersects $\mathcal{U}_d$ in $p \le k$ iterations. The second half of \thref{maxitaxi} shows that if $U \subset \mathcal{U}_d $ is a preimage of $\uc$ which surrounds $z=0$, then there exist at least $k-1$ consecutive backwards iterates of $U$ which lie in $\mathcal{U}_n$.
\begin{lemma}
\thlabel{maxitaxi}

Let $\jl$ satisfying (a), (b), (c), and (d). Let $\lambda\neq 0$, $\modulu<\sefu$. Assume also that $\cl \in \uc$. Let $k\ge1$ such that $\jl^{k}(\uc)\subset \textrm{Bdd}(\ala) $ and $\jl^j (\mathcal{U}_c) \subset \mathcal{U}_n$ for $1\le j <k$.
\begin{enumerate}[label=(\roman*)]

\item{Let $U\subset \mathcal{U}_n$ be an iterated preimage of $\uc$ which surrounds $z=0$. Then, there exists $ p \ge 1$ such that  $S^p(U) \subset \mathcal{U}_d$ and $S^{p'}(U)\in \mathcal{U}_n$ for $0\leq p'<p$. Moreover, $p$ satisfies $p\leq k$.}
\item{Let $U\subset \mathcal{U}_d$ be a preimage of $\uc$ which surrounds $z=0$. Then, there exists $U'\subset \mathcal{U}_n$ such that $\jl^j(U') \subset \mathcal{U}_n$ for $0 \le j<k-1$ and $\jl^{k-1}(U')=U$.}
\end{enumerate}

\end{lemma}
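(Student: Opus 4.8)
The plan is to organise all Fatou components surrounding $z=0$ that lie in $A^{\textrm {out}}$ by the nesting (``surrounding'') partial order and to read off the itineraries from the way $\jl$ shifts this order. Write $V_1\prec V_2$ when $V_1\subset\textrm{Bdd}(V_2)$, i.e.\ when $V_2$ surrounds $V_1$; two disjoint Fatou components that both surround $z=0$ are always $\prec$-comparable, so $\prec$ linearly orders the relevant components. First I would record three monotonicity facts. By \thref{dumnezeu} \textit{(iii)} together with the Riemann--Hurwitz formula the restriction $\jl\colon\mathcal{U}_n\to\mathcal{V}_n$ is an \emph{unbranched} degree-$n$ covering of annuli sending the inner boundary $\partial\ala$ to $\partial\tl$; hence it is radially monotone and preserves $\prec$ among components surrounding $z=0$, that is $V\prec V'\iff\jl(V)\prec\jl(V')$ inside $\mathcal{U}_n$. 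By \thref{haiininterior} every such $V\subset\mathcal{U}_n$ satisfies $\jl(V)\prec V$. Finally, the hypothesis supplies the nested landmarks $\jl(\uc)\succ\jl^2(\uc)\succ\cdots\succ\jl^{k-1}(\uc)\succ\ala$, all lying in $\mathcal{U}_n$, together with $\jl^k(\uc)\preceq\ala$ (since $\jl^k(\uc)\in\textrm{Bdd}(\ala)$).

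For part \textit{(i)} I would first establish the dichotomy that, for $U\subset\mathcal{U}_n$ surrounding $z=0$ and an iterated preimage of $\uc$, the image $\jl(U)\subset\mathcal{V}_n$ is again a component surrounding $z=0$ which is contained either in $\mathcal{U}_d$ or again in $\mathcal{U}_n$; the value $\jl(U)=\ala$ is excluded because $\jl(U)$ is an iterated preimage of $\uc$ whereas $\ala$ is not (there are no cycles outside $\bl$). Then I would attach to $U$ the index $s(U):=\max\{s\in\{0,\dots,k-1\}:U\prec\jl^s(\uc)\}$, with the convention $\jl^0(\uc):=\uc$, so that $U$ sits in the shell between $\jl^{s(U)+1}(\uc)$ and $\jl^{s(U)}(\uc)$. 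Order preservation and $\jl(\jl^s(\uc))=\jl^{s+1}(\uc)$ show that applying $\jl$ raises this index by one as long as the image remains in $\mathcal{U}_n$, i.e.\ $s(\jl(U))=s(U)+1$ while $s(U)<k-1$; and when $s(U)=k-1$ one gets $\jl(U)\prec\jl^k(\uc)\preceq\ala$, which forces $\jl(U)\subset\mathcal{U}_d$. Iterating, the orbit of $U$ stays in $\mathcal{U}_n$ and first enters $\mathcal{U}_d$ after exactly $p=k-s(U)$ steps, giving $1\le p\le k$.

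For part \textit{(ii)}, given $U\subset\mathcal{U}_d$ surrounding $z=0$, I would build the backward orbit by repeatedly applying \thref{chichicea} \textit{(ii)}. Since $\mathcal{U}_d\subset\mathcal{V}_n$, the component $U$ has a preimage $W_1\subset\mathcal{U}_n$ surrounding $z=0$; because $U\prec\ala\prec\jl^{s+1}(\uc)$ for every $s+1\le k-1$, order preservation yields $s(W_1)\ge k-2$. Inductively, as long as $s(W_i)\ge1$ (equivalently $W_i\prec\jl(\uc)$, i.e.\ $W_i\subset\mathcal{V}_n$) a further preimage $W_{i+1}\subset\mathcal{U}_n$ exists with $s(W_{i+1})=s(W_i)-1$. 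Starting from $s(W_1)\ge k-2$ this lets me produce $W_1,\dots,W_{k-1}$, all in $\mathcal{U}_n$; setting $U':=W_{k-1}$ gives $\jl^{k-1}(U')=U$ with $\jl^{j}(U')=W_{k-1-j}\subset\mathcal{U}_n$ for $0\le j<k-1$. The cases $k=1$ and $k=2$ need no induction and are immediate.

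The main obstacle is the second monotonicity fact, namely proving that the degree-$n$ covering $\jl\colon\mathcal{U}_n\to\mathcal{V}_n$ genuinely preserves the nesting order. I would settle this either by uniformising $\mathcal{U}_n$ and $\mathcal{V}_n$ as round annuli, where the covering becomes $w\mapsto cw^n$ and radial monotonicity is transparent, or by invoking the Blaschke conjugacy of \thref{prop:conjblas} on $A^{\textrm {out}}$. The remaining work is bookkeeping: checking that every component produced indeed surrounds $z=0$, is an iterated preimage of $\uc$ and never equal to $\ala$ or $\tl$ (so that \thref{chichicea} applies), and stays $\prec$-comparable to the landmarks $\jl^s(\uc)$, which is precisely where the hypothesis $\jl^j(\uc)\subset\mathcal{U}_n$ for $1\le j<k$ enters.
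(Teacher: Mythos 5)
Your proposal is correct, and its core mechanism is the same as the paper's: the paper introduces the shells $B_i$ bounded by $\overline{\jl^{i-1}(\uc)}$ and $\overline{\jl^{i}(\uc)}$ and proves $\jl(B_i)=B_{i+1}$, and your index $s(U)$ is exactly the label of the shell containing $U$, with your rules $s(\jl(U))=s(U)+1$ and $s(W_{i+1})=s(W_i)-1$ encoding the same shell shift. Where you genuinely diverge is in how that shift is justified and organised. The paper gets it from properness and the boundary correspondence of the components $W_i=\jl^i(\uc)$, and then argues by cases according to whether $\jl^{k}(\uc)=\ala$ or $\jl^{k}(\uc)\subset\mathcal{U}_d$, in part \textit{(ii)} further splitting $\mathcal{U}_d=B_{k}^{\textrm{in}}\cup\overline{W_k}\cup\jl(B_{k}^{\textrm{out}})$; you instead isolate a single monotonicity lemma --- the unbranched degree-$n$ covering $\jl\colon\mathcal{U}_n\to\mathcal{V}_n$ preserves the nesting order $\prec$ among Fatou components surrounding $z=0$ --- which lets both parts run uniformly, with no case distinction. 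What your route buys is a cleaner induction; what it costs is that this lemma must actually be proved, and here one warning is in order: ``radial monotonicity of $w\mapsto cw^{n}$'' is not by itself sufficient, since the images under $w\mapsto w^{n}$ of two disjoint nested sets surrounding $0$ can intersect (take a round circle off-centred from the origin and a slightly larger concentric copy). The statement you need is that when the images are disjoint --- which holds here because they are distinct Fatou components, distinct by the uniqueness part of \thref{chichicea} --- the nesting cannot be reversed; this does follow, for instance from the fact that under an unbranched covering of annuli the preimage of a connected set surrounding $z=0$ is connected and surrounds $z=0$, combined with a separation argument, so your plan is viable once this is spelled out.

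Two harmless inaccuracies you should repair. First, in part \textit{(i)} the orbit enters $\mathcal{U}_d$ after \emph{at most}, not exactly, $k-s(U)$ steps: your increment rule is conditional on the image remaining in $\mathcal{U}_n$, and when $\jl^{k}(\uc)\subset\mathcal{U}_d$ a component with $s(U)=k-2$ may already have its image in the part of $\mathcal{U}_d$ lying between $\jl^{k}(\uc)$ and $\ala$; this only strengthens the bound $p\le k$. Second, for $k=1$ part \textit{(ii)} is vacuous rather than ``immediate'': no $U'\subset\mathcal{U}_n$ with $\jl^{0}(U')=U\subset\mathcal{U}_d$ can exist, and the paper accordingly treats this case as having nothing to prove.
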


\begin{proof}

\begin{figure}
\includegraphics[width=7cm, height=7cm]{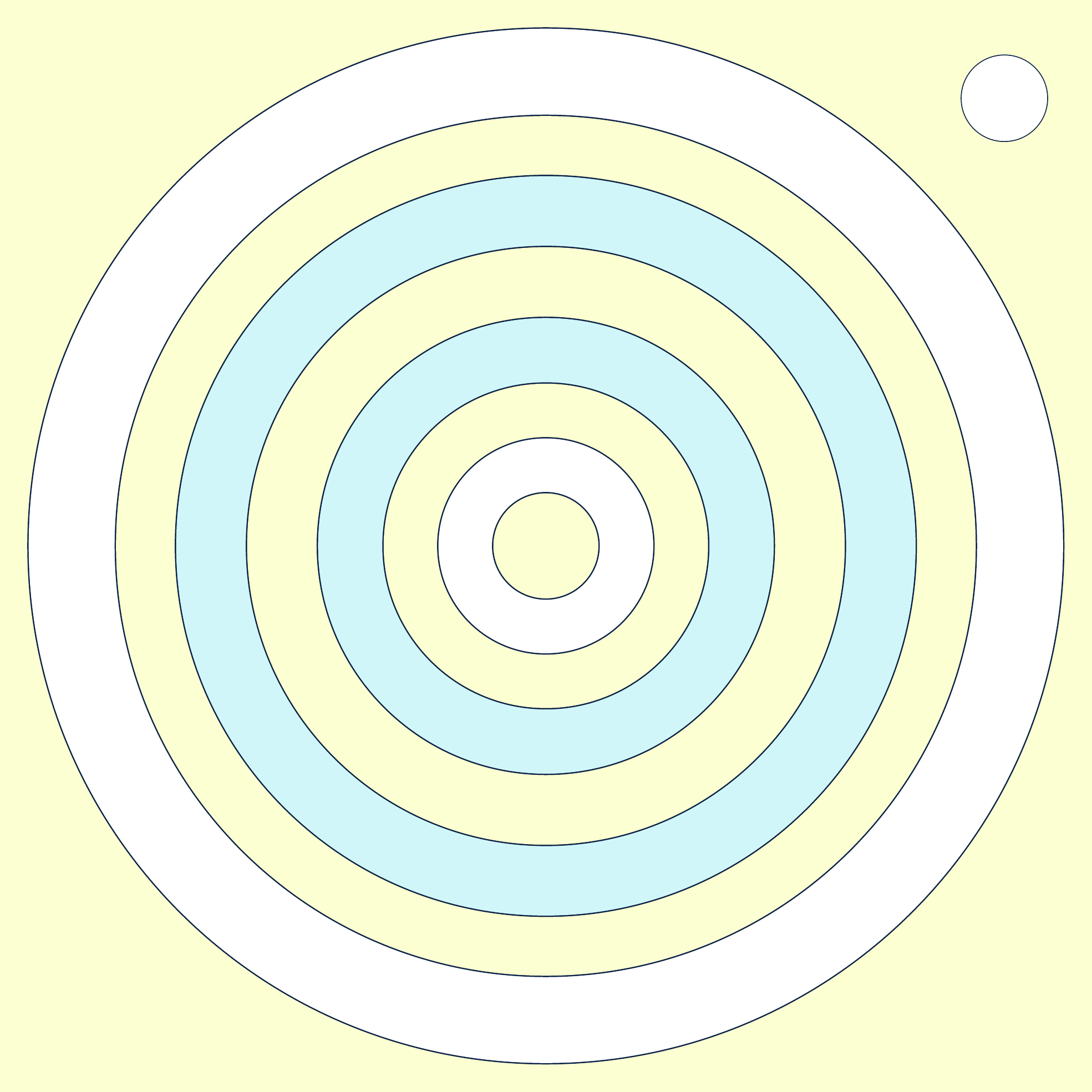}
\put(-180, 169) {$\uc$}
\put(-22,178) {$\mathcal{U}_1$}
\put(-104,98) {$\tl$}
\put(-121, 113) {$W_2$}
\put(-105, 182) {$B_1$}
\put(-106, 156) {$B_{2}^{ \textrm out}$}
\put(-106, 131) {$B_{2}^{\textrm in}$}
\put(-159, 145) {$W_1$}
\put(-144, 128) {$\ala$}
\caption{Description of the situation in the proof of \thref{maxitaxi}, where $k=2$ and $W_2 \subset \mathcal{U}_d$. In this case, $B_2=B_{2}^{\textrm {out}} \cup  \overline{\ala} \cup B_{2}^{\textrm{in}}$.}
\label{pozanr3} 
\end{figure}

Set $W_{i}=\jl^{i}(\uc), \, i=0,\ldots,k$. By \thref{haiininterior},  $W_i$ surrounds $W_{i+1}$, $i=0,\ldots,k-1$. Let $B_{i+1}$ be the annulus bounded by $\overline{W_i}$ and $\overline{W_{i+1}}$, $i=0,\ldots, k-1$. It follows that $\jl(B_i)=B_{i+1}$, $i=1,\ldots, k-1$. Observe that if $W_k \subset\mathcal{U}_d$, then $B_k= B_{k}^{\textrm out}\cup \overline{\ala} \cup B_{k}^{\textrm in}$, where $B_{k}^{\textrm out}= B_k \cap \mathcal{U}_n $ and $B_{k}^{\textrm in}= B_k \cap \mathcal{U}_d$ (see  Figure \ref{pozanr3}). Along the proof we distinguish the cases $W_k=\ala$ and $W_k \subset \mathcal{U}_d$. 

We now prove statement \textit{(i)}. Assume first that $W_k=\ala$. Let $U \subset B_i$, $ i=1,\ldots,k$, be an eventual preimage of $\uc$ which surrounds $z=0$. Then $\jl^{k+1-i}(U)\subset \mathcal{U}_d$. We can conclude that if $U \subset \mathcal{U}_n$ is a preimage of $\uc$ which surrounds $z=0$, then there exists $p \leq k$ such that $\jl^{p}(U) \subset \mathcal{U}_d$. In fact, $p=k-i$ if $U \subset B_i$. Now assume $W_k \subset \mathcal{U}_d$. For $k=1$, $\jl(\mathcal{U}_n) \subset \mathcal{U}_d$ and the conclusion follows. For $k \ge 2$ we have $W_1 \subset \mathcal{U}_n$ (so $B_{k-1}$ and $B_{k}$ exist). Let $U \subset B_i$, $i=1, \dots, k-1$, be a preimage of $\uc $ which surrounds $z=0$ and let $V= \jl^{k-1-i}(U)$. Observe that $V \subset B_{k-1}$ is an eventual preimage of $\uc$ which surrounds $z=0$ and $\mathcal{U}_d$ is the disjoint union of $B_{k}^{\textrm in}$, $\overline{W_k}$, and $\jl(B_{k}^{\textrm out})$. Then, $\jl(V) \subset B_{k}^{\textrm out}$ (and $\jl^2(V)\subset \mathcal{U}_d$) or $\jl(V) \subset B_{k}^{\textrm in}$ (and $\jl(V)\subset \mathcal{U}_d$). Since $\jl^{k-1-i}(B_i)=B_{k-1}$, $i=0,\ldots, k-1$, this concludes the proof of statement \textit{(i)}.

Now we prove \textit{(ii)}. Let $U\subset \mathcal{U}_d$ be a preimage of $\uc$ which surrounds $z=0$. Assume first that $W_k=\ala$. Then $\mathcal{U}_d=\jl(B_k)=\jl^k (B_1)$. So there exists $U'\subset \mathcal{U}_1$ such that $\jl^j(U') \subset B_{j+1} \subset \mathcal{U}_n$ for $0 \le j<k$ and $\jl^{k}(U')=U$. Now let $W_k \subset \mathcal{U}_d$. For $k=1$ there is nothing to prove. For $k \ge 2$ we have $W_1 \subset \mathcal{U}_n$ (so $B_{k-1}$ and $B_{k}$ exist). Moreover, $\mathcal{U}_d = B_{k}^{\textrm in} \cup \overline{W_k}\cup \jl(B_{k}^{\textrm out})$. Since $U$ is a preimage of $\uc$ we have that $U \ne W_k$. We distinguish 2 cases. If $U \subset B_{k}^{\textrm in}$, then $U\subset \jl(B_{k-1})$ and we can take preimages through the sets $B_i$ so that there exists $U'\subset B_1$ such that $\jl^j(U') \subset B_{j+1}\subset \mathcal{U}_n$ for $0 \le j<k-1$ and $\jl^{k-1}(U')=U$. Finally, if $U \subset \jl(B_{k}^{\textrm out})$, then $U\subset \jl(B_{k})$ and there exists $U'\subset B_1$ such that $\jl^j(U') \subset B_{j+1}\subset \mathcal{U}_n$ for $0 \le j<k$ and $\jl^{k}(U')=U$. This concludes the proof of statement \textit{(ii)}.
\end{proof}
We can now proceed with proof of Theorem A.

\begin{proof}[Proof of Theorem A]

By \thref{chichicea}, \thref{nusurrconstant}, and \thref{bambampocpoc}, if the connectivity of a Fatou component is different from 1 or 2, then it has to be of the form $\kappa=(n+1)^{i}d^j n^\ell+2$ where $i,j,\ell\in \mathbb{N}$. It follows from the Riemann-Hurwitz formula that if $f:U\rightarrow V$ is proper of degree $q$ without critical points and $V$ has connectivity $p+2$, then $U$ has connectivity $qp+2$. Moreover, these connectivities are achieved through preimages of $\uc$. In order to increase the connectivity we have to take preimages of $\uc$, which has connectivity $3=1+2$. It follows from \thref{haiininterior} that if $U\subset \mathcal{U}_s$ , $s\in \{n+1, \, d, \, n \}$, is a Fatou component that surrounds $z=0$, then it is mapped onto its image with degree $s$. Therefore, in order to increase the coeficient $n$ in the expression of the connectivity, we have to take preimages in $\mathcal{U}_n$. By \thref{haiininterior}, every Fatou component $U \subset \mathcal{U}_n$ is eventually mapped to $\mathcal{U}_d$, without passing through $\mathcal{U}_{n+1}$. By \thref{maxitaxi} \textit{(i)}, for every backwards iteration through $\mathcal{U}_d$ there are at most $k$ backwards iterations in $\mathcal{U}_n$. Since $\uc \not \subset  \mathcal{U}_{d} \cup \mathcal{U}_{n}$, it follows that $\ell \le j k $.
\end{proof}

The final part of this section is devoted to the proof of Theorem B. The following lemma shows that there are no restrictions to the exponents of $n+1$ and $d$ in respect to achievable connectivities.

\begin{lemma}
\thlabel{chichicea1}
Let $\jl$ satisfying (a), (b), (c), and (d). Let $\lambda\neq 0$, $\modulu<\sefu$. Assume also that $\cl \in \uc$, where $\uc$ is an iterated preimage of $\ala$ which surrounds $z=0$. Then, the following hold:

\begin{enumerate}[label=(\roman*)]

\item{There exists an eventual preimage of $\uc$ which lies in $\mathcal{U}_{n+1}$, surrounds $z=0$, and has connectivity $\kappa=(n+1)^i+2, \quad \forall i \in \mathbb{N}$.}
\item{Let $V$ be an eventual preimage of $\uc$ which surrounds $z=0$ and let $\kappa$ be the connectivity of $V$. Then, there exists a Fatou component, which surrounds $z=0$, of connectivity $d(\kappa-2)+2.$}

\end{enumerate}
In particular, there exists a Fatou component which surrounds $z=0$ and has connectivity $\kappa= (n+1)^i d^j+2, \quad \forall i,j \in \mathbb{N}$.
\end{lemma}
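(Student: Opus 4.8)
The plan is to prove (i) and (ii) by a single application of \thref{chichicea} apiece combined with the Riemann--Hurwitz formula, and then to obtain the concluding assertion by chaining them: part (i) supplies the factor $(n+1)^i$ and $j$ iterations of (ii) supply the factor $d^j$. The whole argument amounts to tracking Riemann--Hurwitz contributions across preimages on which no critical point is present.

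For (i) I would induct on $i$, each step peeling off one preimage inside $\mathcal{U}_{n+1}$. The base case is $\uc$ itself, which by \thref{dumnezeu} lies in $\mathcal{V}_{n+1}$ and surrounds its image $\jl(\uc)$. Applying \thref{chichicea}(i) to $\uc$ produces its unique preimage $U_1 \subset \mathcal{U}_{n+1}$, surrounding $z=0$ and mapped with degree $n+1$. Since $\cl$ lies in $\uc$ and the $n$ critical points of $\bl$ lie in $\textrm{Ext}(\partial\bl)$, the annulus $\mathcal{U}_{n+1}$ carries no critical point, so \thref{RH} gives $m_{U_1}-2 = (n+1)\cdot 1$, i.e. connectivity $(n+1)+2$. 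The induction is self-sustaining because $\mathcal{U}_{n+1}\subset \mathcal{V}_{n+1}$: indeed $\uc$ surrounds $\jl(\uc)$, so the region between $\uc$ and $\partial\bl$ sits inside the region between $\jl(\uc)$ and $\partial\bl$. Hence $U_1\subset\mathcal{V}_{n+1}$ and \thref{chichicea}(i) reapplies; after $i$ steps I obtain $U_i \subset \mathcal{U}_{n+1}$ surrounding $z=0$ with $m_{U_i}-2 = (n+1)^i$.

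For (ii) let $V$ be an eventual preimage of $\uc$ surrounding $z=0$ of connectivity $\kappa$. As $V$ is distinct from $\tl$, $\ala$, and $\bl$ and surrounds the origin, it lies in $A(\partial\tl,\partial\bl)$, hence in $\mathcal{V}_{n+1}$ or in $\mathcal{V}_n$; in either case \thref{chichicea} provides a preimage $W \subset \mathcal{U}_d$ surrounding $z=0$ and mapped with degree $d$. Since $\mathcal{U}_d = A^{\textrm {in} }$ contains no critical point, \thref{RH} yields $m_W - 2 = d(\kappa-2)$, which is (ii). Crucially, $W$ is again an eventual preimage of $\uc$ surrounding $z=0$, so (ii) can be fed back into itself.

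To finish the final claim I would take $U_i$ from (i) (or $\uc$ itself when $i=0$), of connectivity $(n+1)^i+2$, and apply (ii) exactly $j$ times; each application multiplies $\kappa-2$ by $d$, so the resulting Fatou component surrounds $z=0$ and has connectivity $(n+1)^i d^j + 2$. I expect the only genuinely delicate points to be the two inclusions that keep the inductions running, namely $\mathcal{U}_{n+1}\subset\mathcal{V}_{n+1}$ in (i) and the property that the $\mathcal{U}_d$-preimage built in (ii) again surrounds $z=0$ and is an eventual preimage of $\uc$; both follow from \thref{dumnezeu} and \thref{chichicea}, after which everything reduces to the bookkeeping of Riemann--Hurwitz with no branching in $\mathcal{U}_{n+1}$ or $\mathcal{U}_d$.
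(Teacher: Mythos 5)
Your proof is correct and follows essentially the same route as the paper's: repeated application of \thref{chichicea} plus the Riemann--Hurwitz formula on critical-point-free preimages, with the induction in (i) sustained by the inclusion $\mathcal{U}_{n+1}\subset\mathcal{V}_{n+1}$ (the paper invokes \thref{haiininterior} for this step), and the final claim obtained by composing (i) with $j$ applications of (ii). The only difference is that you make explicit two inclusions (that $U_1\subset\mathcal{V}_{n+1}$, and that $V$ lies in $\mathcal{V}_n$ or $\mathcal{V}_{n+1}$) which the paper leaves implicit.
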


\begin{proof}

First we prove \textit{(i)}. Recall that, by \thref{dumnezeu}, $\uc \subset \mathcal{V}_{n+1}$. By \thref{chichicea} and \thref{haiininterior}, for any $i \ge 1$ there exists a Fatou component $U$ which surrounds $z=0$ such that $\jl^{j} (U) \subset \mathcal{U}_{n+1}$, for $j=0, \dots i-1$, and $\jl ^{i}(U)= \uc$. Since $\uc$ has connectivity $3$ and no eventual preimage of $\uc$ contains a critical point, it follows by succesively applying the Riemann-Hurwitz formula that the connectivity of $U$ is $(n+1)^i+2$. This proves \textit{(i)}.

To prove \textit{(ii)}, let $V$ be an eventual preimage of $\uc$ which surrounds $z=0$, of connectivity $\kappa$. By \thref{chichicea}, $V$ has a preimage in $\mathcal{U}_d$ which surrounds $z=0$ and which is mapped onto it with degree $d$. Since $V$ cannot contain any critical value, by the Riemann-Hurwitz formula the connectivity of this preimage of $V$ is $d(\kappa-2)+2$. This concludes the proof of \textit{(ii)}.
\end{proof}

We can now proceed with proof of Theorem B.

\begin{proof}[Proof of Theorem B]

Fix $i\geq0,j\geq 0,\ell\geq 0$ such that $\ell \le j(k-1)$. We want to show that there exists a Fatou component $U$ (which will be an iterated preimage of $\uc$) of connectivity $\kappa=(n+1)^{i}d^j n^\ell +2$. 

If $k=1$, then by \thref{chichicea1} the conclusion holds. Otherwise, $\jl (\uc) \subset \mathcal{U}_n$  and $\mathcal{U}_d \subset \mathcal{V}_n$. By \thref{chichicea1} \textit{(i)}, there exists a Fatou component $V$ which is an iterated preimage of $\uc$, surrounds $z=0$, lies in $\mathcal{U}_{n+1}$, and has connectivity $ (n+1)^i+2$. This concludes the proof for $j=0$. 

Assume that $j \ne 0$ (remember that $k>1$). By \thref{chichicea} \textit{(i)}, there exists a preimage of $V$ in $\mathcal{U}_d$ which surrounds $z=0$. By \thref{maxitaxi} \textit{(ii)}, there exists $V^{(1)}\subset \mathcal{U}_n$ which surrounds $z=0$ such that 
$$
\jl^r(V^{(1)}) \subset \mathcal{U}_n \ \ {\textrm for} \ \ 0\leq r < k-1, \quad \jl^{k-1}(V^{(1)})\subset \mathcal{U}_d \quad {\textrm and} \quad \jl^{k}(V^{(1)})=V.
$$
Recall that no iterated preimage of $\uc$ contains a critical point and so, by \thref{chichicea}, if they lie in $\mathcal{U}_i$, $ i \in \{n+1, \, n, \, d \} $, they map forward with degree $i$. Applying this criteria to the iterated preimages of $V$ up to $V^{(1)}$, we get from the Riemann-Hurwitz formula that 
$$
\kappa\left(\jl^r(V^{(1)})\right)=(n+1)^{i}dn^{k-1-r}+2,\ r=0,\ldots k-1.
$$
Starting the process with $V^{(1)}$ instead of $V$ we can take a preimage of $V^{(1)}$ in $\mathcal U_d$ and then we can take up to $k-1$ iterated preimages in $\mathcal U_n$ to land on, say, $V^{(2)}$. As above, we get that  
$$
\kappa\left(\jl^r(V^{(2)})\right)=(n+1)^{i}d^2n^{2(k-1)-r}+2,\ r=0,\ldots k-1.
$$
Repeating the same process $j$-times we conclude that there exist Fatou components with connectivity
$$
\kappa\left(\jl^r(V^{(s)})\right)=(n+1)^{i}d^sn^{s(k-1)-r}+2,\ s=1,\ldots j,\  r=0,\ldots k-1.
$$
If $(j-1)(k-1)<\ell\leq j(k-1)$ we are done (take $s=j$ in the previous formula). If $\ell \leq t(k-1)$ with $t\leq (j-1)$, then we stop the process at level $t$ and take $j-t$ preimages in $U_d$ to get the desired connectivity.

\end{proof}

\section{Proof of theorem C}\label{sec:Theorem_C}
In this section we prove Theorem C.  We first show that there is a sequence of preimages of  $A_{\la}$ which surround $z=0$ and accumulate on $\partial \bl$. We want  to remark that the set $A^{\textrm {out}}$  depends on $\lambda$ even if we do not indicate it in its notation.
\begin{lemma}
\thlabel{alambdas}
Let $\jl$ satisfying (a), (b), (c), and (d). Let $\lambda\neq 0$, $\modulu<\sefu$. Let $A_{0, \la}:=\ala$. Then there exist $\{A_{m, \la}\}_{m \ge 1}$ iterated preimages of $A_{0, \la}$, $\jl(A_{m+1, \la})=A_{m, \la}$, such that the following properties are satisfied:
\begin{enumerate}[label=(\roman*)]
\item{Each Fatou component $A_{m, \la}$ is surrounded by $A_{m+1,\la}$, that is, $A_{m, \la}\subset \textrm{Bdd} (A_{m+1,\la})$. In particular, all $\{A_{m, \la}\}_{m \ge 1}$ lie in $A^{\textrm out}$.} 
\item{The sequence of Fatou components $\{A_{m, \la}\}_{m \ge 1}$ accumulate on $\partial \bl$ as $m\to \infty$.}
\end{enumerate}
\end{lemma}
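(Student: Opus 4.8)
The plan is to build the sequence inductively, always selecting the preimage that lies in $A^{\textrm {out}}$, and then to control its geometry through the Blaschke model furnished by \thref{prop:conjblas}. I would set $A_{0,\la}:=\ala$ and, assuming $A_{m,\la}$ has been constructed as a Fatou component surrounding $z=0$ and different from $\tl$ and $\bl$, invoke the remark following \thref{prop:preimcurves}: such a component has exactly two preimages surrounding $z=0$, one in $A^{\textrm {in} }$ and one in $A^{\textrm {out}}$. I would define $A_{m+1,\la}$ to be the one in $A^{\textrm {out}}$. By construction $\jl(A_{m+1,\la})=A_{m,\la}$, each new component again surrounds $z=0$ and lies in $A^{\textrm {out}}$ (hence differs from $\tl$ and $\bl$), so the induction propagates and the ``in particular'' clause of (i) is immediate.

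For the nesting in (i), note that $A_{m,\la}$ and $A_{m+1,\la}$ are disjoint Fatou components both surrounding $z=0$, so one surrounds the other; I must exclude that $A_{m,\la}$ surrounds $A_{m+1,\la}$. Here I would use \thref{prop:conjblas}: on $A^{\textrm {out}}$ the map $\jl$ is conjugate, via a quasiconformal $\varphi$ fixing $z=0$ and sending $\partial\bl$ to $\partial\mathbb{D}$, to the Blaschke product $R_{b,\theta}$. Since $R_{b,\theta}$ is a proper self-map of $\mathbb{D}$ of degree $n+1\ge 3$ fixing $0$ and is not a rotation, the Schwarz lemma gives $|R_{b,\theta}(z)|<|z|$ for all $z\in\mathbb{D}\setminus\{0\}$. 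Exactly as in the mechanism of \thref{haiininterior} (which rests only on \thref{prop:conjblas}), this forces the image under $\jl$ of any Jordan curve in $A^{\textrm {out}}$ surrounding $z=0$ to be surrounded by that curve. Applying this to the outer boundary of $A_{m+1,\la}$, which maps onto the outer boundary of $A_{m,\la}$, shows that $A_{m+1,\la}$ surrounds $A_{m,\la}$, i.e.\ $A_{m,\la}\subset\textrm{Bdd}(A_{m+1,\la})$.

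For the accumulation in (ii), let $\beta_m$ be the outer boundary component of $A_{m,\la}$; by (i) these are nested Jordan curves approaching $\partial\bl$ from inside, so $\beta_m\to\beta_\infty$ in the Hausdorff metric for some compact $\beta_\infty\subset\overline{A^{\textrm {out}}}$ surrounding $z=0$. I would argue by contradiction: if the $A_{m,\la}$ did not accumulate on $\partial\bl$, then $\beta_\infty$ would be bounded away from $\partial\bl$, hence lie in the Blaschke region $A^{\textrm {out}}$. Since $\jl(\beta_{m+1})=\beta_m$ and $\jl$ is continuous, passing to the Hausdorff limit yields $\jl(\beta_\infty)=\beta_\infty$; transporting through $\varphi$ produces an $R_{b,\theta}$-invariant compact set $\varphi(\beta_\infty)\subset\mathbb{D}$ surrounding $0$. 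But $\max_{\varphi(\beta_\infty)}|z|<1$ is attained, while the strict contraction $|R_{b,\theta}(z)|<|z|$ makes the maximum of $|z|$ over the image strictly smaller, contradicting invariance. Hence $\beta_m\to\partial\bl$, so the $A_{m,\la}$ accumulate on $\partial\bl$.

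The hard part will be the geometric steps in (i)--(ii): deducing rigorously, from the Blaschke conjugacy alone, that curves surrounding $z=0$ strictly move inward under $\jl$, and that no nontrivial invariant continuum can survive in $A^{\textrm {out}}$. Particular care is needed because, by \thref{prop:preimcurves}(ii)--(iii), the preimage of a curve in $A^{\textrm {out}}$ may also contain a component around $w_{\la}$; I must verify that this extra component neither affects the identification of $A_{m+1,\la}$ as the preimage surrounding $z=0$ nor the monotonicity of the curves $\beta_m$, and that $\beta_\infty$ indeed lies in the region where \thref{prop:conjblas} applies.
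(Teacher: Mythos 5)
Your overall route is the same as the paper's own (very terse) proof: build the $A_{m,\la}$ inductively via \thref{prop:preimcurves} by always taking the preimage in $A^{\textrm{out}}$ that surrounds $z=0$, and then extract the geometry of (i)--(ii) from the Blaschke model of \thref{prop:conjblas}. The gap is in the detail you supply for (i). The implication ``$|R_{b,\theta}(z)|<|z|$ on $\mathbb{D}\setminus\{0\}$ forces every Jordan curve surrounding $0$ to surround its image'' is false: pointwise modulus contraction says nothing about where an image point sits relative to a wiggly curve. For instance $z\mapsto z^{2}$ contracts moduli on $\mathbb{D}^{*}$, yet if $\gamma$ is the circle of radius $0.9$ modified by a thin radial channel cut down to radius $0.01$ at argument $0$, the point $-0.9\in\gamma$ maps to $0.81$, which lies inside the channel, i.e.\ in $\textrm{Ext}(\gamma)$; so the image of $\gamma$ leaves $\overline{\textrm{Int}(\gamma)}$ even though every point's modulus strictly drops. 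Hence your Schwarz-pushing step, which you also use to transfer ``nesting'' to the outer boundaries, would fail as stated. The correct mechanism (and the one implicitly behind \thref{haiininterior}) is dynamical: if $A_{m+1,\la}$ did not surround $A_{m,\la}$, then, the two components being disjoint and both surrounding $z=0$, $A_{m,\la}$ would surround $A_{m+1,\la}$; since the outer boundary of $A_{m+1,\la}$ maps onto the outer boundary of $A_{m,\la}$, the annulus between the outer boundary of $A_{m+1,\la}$ and $\partial \bl$ would then be mapped into itself by $\jl$. Transporting by $\varphi$ gives a forward-invariant region for $R_{b,\theta}$ bounded away from $0$, contradicting the fact that every orbit of $R_{b,\theta}$ in $\mathbb{D}$ converges to $0$. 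Schwarz does enter, but through $|R_{b,\theta}^{k}(z)|\searrow 0$, not through pointwise pushing of curves.

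There is also a mismatch in (ii) between what your argument proves and what is needed. The max-modulus argument on a Hausdorff limit (once the subsequence technicalities are handled) shows that $\max_{z\in\varphi(\beta_m)}|z|\to 1$, i.e.\ the outer boundaries get close to $\partial \bl$ \emph{somewhere}; it does not exclude that every $A_{m,\la}$ keeps a finger meeting a fixed compact subset of $\textrm{Int}(\partial \bl)$. But accumulation on $\partial \bl$ --- in particular the consequence that $\bigcup_m \textrm{Bdd}(A_{m,\la})$ exhausts $\textrm{Int}(\partial \bl)$, which is exactly what \thref{lem5point2} later needs to find $m$ with $\cl\in\textrm{Bdd}(A_{m,\la})$ --- requires that the $A_{m,\la}$ eventually leave every such compact set. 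The clean repair, which also reproves (i) by the same device: for a compact $K\subset\mathbb{D}$, the iterates $R_{b,\theta}^{k}\to 0$ uniformly on $K$, so some $R_{b,\theta}^{k_0}(K)$ lies in a disk around $0$ disjoint from all $\varphi(A_{j,\la})$, $j\geq 1$ (these all lie outside the image under $\varphi$ of the outer boundary of $\ala$); since $R_{b,\theta}^{k_0}$ maps $\varphi(A_{m,\la})$ onto $\varphi(A_{m-k_0,\la})$ for $m>k_0$, no such $\varphi(A_{m,\la})$ can meet $K$. With this orbit-convergence mechanism substituted for the pointwise Schwarz step, your outline becomes a correct proof along the same lines as the paper's.
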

\begin{proof}
Every Fatou component which surrounds $z=0$ has exactly 2 boundary components which surround $z=0$. It follows from \thref{prop:preimcurves} that every Fatou component in $A^{\textrm{out}}$ which surrounds $z=0$ has exactly a preimage in $A^{\textrm {out}}$ which surrounds $z=0$. Let $\{A_{m, \la}\}_{m \ge 1}$ be the sequence of Fatou components obtained by taking consecutive preimages of $A_{0, \la}$ in $A^{\textrm{out}}$ which surround $z=0$. Since $\jl$ is conjugated to a Blaschke product on $A^{\textrm {out}}$, by \thref{prop:conjblas}, the Fatou components $A_{m, \la}$ accumulate on $\partial \bl$ as $m$ goes to $\infty$. It also follows from the conjugation with the Blaschke product that  $A_{m, \la}\subset \textrm{Bdd} (A_{m+1,\la})$ for all $m\geq 0$.
\end{proof}

The multiply connected sets  $A_{m, \la}$ surround $z=0$. Therefore, there are exactly 2 boundary components of $A_{m, \la}$ which surround $z=0$. We denote them by $\partial^{\textrm{Int}} A_{m, \la}$ and $\partial^{\textrm{Ext}} A_{m, \la}$, where $\partial^{\textrm{Int}} A_{m, \la} \subset \textrm{Int}(\partial^{\textrm{Ext}} A_{m, \la})$. Next lemma tells as that if $m$ is large enough then there are parameters $\lambda$ such that $\nu_{\la}\in A_{m, \la}$.

\begin{lemma}
Let $\jl$ satisfying (a), (b), (c), and (d). Let $\lambda\neq 0$, $\modulu<\sefu$. Then, if $m \in \mathbb{N}^{*}$ is big enough, there exists a parameter $\la$ such that $\cl \in A_{m, \la}$.
\thlabel{lem5point2}
\end{lemma}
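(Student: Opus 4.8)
The plan is to keep $m$ large but fixed and let $\lambda$ run along a straight segment shrinking to $0$, catching $\cl$ inside the moving component $A_{m,\la}$ by an intermediate--value argument. Fix a reference parameter $\lambda_*$ with $0<|\lambda_*|<\sefu$ and set $\lambda(t)=t\lambda_*$ for $t\in(0,1]$; for every such $t$ we have $0<|\lambda(t)|<\sefu$, so by \thref{munnum} the whole skeleton is in place and, in particular, $\nu_{\lambda(t)}\in A^{\textrm{out}}$. By \thref{alambdas} the component $A_{m,\la}$ is a genuine annulus surrounding $z=0$, bounded by two disjoint curves $\partial^{\textrm{Int}}A_{m,\la}\subset\textrm{Int}(\partial^{\textrm{Ext}}A_{m,\la})$ which encircle the origin. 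First I would record the continuity input: $\cl$ depends continuously (indeed holomorphically) on $\lambda$; $\partial\bl$ moves continuously by \thref{prop:continfty}; and, since $A_{m,\la}$ is obtained by pulling $\ala$ back $m$ times along the branch surrounding $z=0$, the curves $\partial^{\textrm{Int}}A_{m,\la(t)}$ and $\partial^{\textrm{Ext}}A_{m,\la(t)}$ vary continuously in $t$ in the Hausdorff metric. Finally, for this fixed $m$ the modulus of $A_{m,\la(t)}$ is a positive continuous function of $t$, hence bounded below on a compact $t$--interval, so the two boundary curves stay at a definite positive distance.

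The argument rests on two opposite limiting configurations. On the one hand, for $m$ fixed and $t\to 0^{+}$ the annulus $\ala$ collapses to $z=0$ at rate $|\lambda(t)|^{1/(n+d)}$ (\thref{cichicea}), and each of its finitely many preimages $A_{1,\la},\dots,A_{m,\la}$, being pulled back through the superattracting fixed point $z=0$ of local degree $n$, also collapses to $z=0$; since $\nu_{\lambda(t)}\to\nu_0\neq 0$, for $t$ small the critical point lies strictly outside, $\nu_{\lambda(t)}\in\textrm{Ext}\!\left(\partial^{\textrm{Ext}}A_{m,\la(t)}\right)$. On the other hand, by \thref{alambdas}\textit{(ii)} the components $A_{m,\lambda_*}$ accumulate on $\partial\bl$ as $m\to\infty$, while $\nu_{\lambda_*}$ sits at positive distance inside $\partial\bl$ (it lies in the open set $A^{\textrm{out}}$). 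Hence there is $M=M(\lambda_*)$ so that for every $m\ge M$ both boundary curves of $A_{m,\lambda_*}$ surround $\nu_{\lambda_*}$, that is, at $t=1$ we have $\nu_{\lambda_*}\in\textrm{Int}\!\left(\partial^{\textrm{Int}}A_{m,\lambda_*}\right)$ (or already $\nu_{\lambda_*}\in A_{m,\lambda_*}$, in which case we are done). This is precisely the role of the hypothesis ``$m$ big enough''.

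It remains to cross. Parametrising with $t$ increasing from $0$ to $1$, set $s^{*}=\sup\{t:\ \nu_{\lambda(t)}\in\textrm{Ext}(\partial^{\textrm{Ext}}A_{m,\la(t)})\}$ and $s^{**}=\inf\{t>s^{*}:\ \nu_{\lambda(t)}\in\textrm{Int}(\partial^{\textrm{Int}}A_{m,\la(t)})\}$. By the two configurations above $0<s^{*}<s^{**}<1$, where the definite positive distance between the two boundary curves is what guarantees $s^{**}>s^{*}$; and by continuity $\nu_{\lambda(s^{*})}\in\partial^{\textrm{Ext}}A_{m,\la(s^{*})}$ while $\nu_{\lambda(s^{**})}\in\partial^{\textrm{Int}}A_{m,\la(s^{**})}$. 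For $t\in(s^{*},s^{**})$ the point $\nu_{\lambda(t)}$ lies in $\overline{A_{m,\la(t)}}$. If it lay on $\partial A_{m,\la(t)}$ for every such $t$, then the two sets $\{t:\nu_{\lambda(t)}\in\partial^{\textrm{Ext}}\}$ and $\{t:\nu_{\lambda(t)}\in\partial^{\textrm{Int}}\}$ would be disjoint, relatively closed (by Hausdorff continuity of the curves) and would cover the connected interval $(s^{*},s^{**})$; one of them would therefore be empty, contradicting the boundary values at $s^{*}$ and $s^{**}$. Hence $\nu_{\lambda(t)}\in A_{m,\la(t)}$ for some $t$, and $\lambda=\lambda(t)$ is the desired parameter. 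The main obstacle is exactly this last point — ensuring the crossing deposits $\cl$ in the \emph{open} Fatou component and not on its boundary, a piece of the Julia set; it is overcome by using that $A_{m,\la}$ is bounded by \emph{two} disjoint curves kept at positive distance through the Blaschke conjugacy of \thref{prop:conjblas}, so that $\cl$ cannot pass from one side of the annulus to the other without entering it. The secondary technical point to pin down carefully is the Hausdorff continuity of $\partial^{\textrm{Int}}A_{m,\la}$ and $\partial^{\textrm{Ext}}A_{m,\la}$ precisely through the parameters where $\cl$ meets them.
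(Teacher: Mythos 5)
Your overall strategy --- establishing the two extreme configurations ($\cl$ strictly outside $A_{m,\la}$ for $|\la|$ small, $\cl$ enclosed by $\textrm{Bdd}(A_{m,\la})$ at a reference parameter once $m$ is large) and then crossing along a parameter path --- is the same as the paper's. But the crossing step has a genuine gap, and it is exactly the point you relegate to ``a secondary technical point'': you assume that $\partial^{\textrm{Int}} A_{m,\la(t)}$ and $\partial^{\textrm{Ext}} A_{m,\la(t)}$ are disjoint Jordan curves moving continuously in the Hausdorff metric for \emph{all} $t$, and you extract a positive lower bound on their distance from continuity of the modulus. This is precisely what fails (or at least is unproven) at the parameters your path must pass through. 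The sets $A_{m,\la}$ are Fatou components, and their boundary structure bifurcates when $\cl$, or a forward image of it, meets $\partial A_{m,\la}$: as the paper's proof records, at such a parameter $A_{m,\la}$ can change connectivity (doubly to triply connected), $\partial^{\textrm{Int}} A_{m,\la}$ can degenerate into two Jordan curves meeting at $\cl$, or an extra preimage component $A'$ of $A_{m-1,\la}$ can touch $\partial A_{m,\la}$ exactly at $\cl$. Your relative-closedness/connectedness argument on $(s^{*},s^{**})$ invokes Hausdorff continuity and disjointness of the two boundary curves \emph{at} these bifurcation parameters, so the argument is circular: the continuity you lean on is the thing that breaks exactly where the crossing happens. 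Appealing to the Blaschke conjugacy of \thref{prop:conjblas} does not repair this, since that conjugacy is itself $\la$-dependent and gives no uniform control through the bifurcation.

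The paper's resolution is not a refined continuity check but a structural argument, and this is the idea missing from your proposal: since there is only one free critical point, at most one level of the tower of preimages can be singular at a given parameter, so $A_{m-1,\la}$ \emph{does} move continuously near any parameter $\la'$ with $\cl\in\partial A_{m,\la'}$; one then applies \thref{prop:preimcurves} one level down, classifying the position of $\cl$ relative to $A_{m,\la}$ in terms of the position of the critical \emph{value} $\jl(\cl)$ relative to $A_{m-1,\la}$ (strictly inside $\textrm{Int}(\partial^{\textrm{Int}}A_{m-1,\la})$, on a boundary curve, or in $A_{m-1,\la}$, and symmetrically for the external boundary). These local configurations show that $\cl$ cannot pass from $\textrm{Ext}(\partial^{\textrm{Ext}}A_{m,\la})$ to $\textrm{Int}(\partial^{\textrm{Int}}A_{m,\la})$ without entering the open component $A_{m,\la}$, which is the conclusion you want. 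A secondary inaccuracy: your justification of the collapse of $A_{m,\la}$ as $\la\to 0$ (pullbacks ``through the superattracting fixed point $z=0$ of local degree $n$'') is wrong as stated, because for $\la\neq 0$ the point $z=0$ is a pole of $\jl$, not a fixed point; the paper instead obtains the collapse from uniform convergence of $\jl$ to $\jn$ on compact subsets of $\mathbb{C}^{*}\setminus\mathbb{D}_{\epsilon}$ together with the behaviour of the orbit of $\nu_0$ under $\jn$.
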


\begin{proof}
	
The idea of the proof is to show that, if $m$ is big enough, we can find a parameter $\la_{0}$ such that $\nu_{\la_{0}}\in\textrm{Bdd}(A_{m, \la_{0}})$ and a parameter $\la_{m}$ such that $\nu_{\la_{m}}$ belongs to the unbounded component of $\mathbb{C}^*\setminus A_{m, \la_m}$. We will then conclude that there exists a parameter $\la'_{m}$ such that $\nu_{\la'_{m}}\in A_{m, \la'_{m}}$.

Fix $\lambda_0$ such that all hypothesis hold. Then $A_{0, \la_0}$ is well defined, and so are $A_{m,\la_0}$, $m> 0$. Let $m_0$ be such that $\nu_{\la_{0}}\in\textrm{Bdd}(A_{m_0, \la_{0}})$. Then, for all $m\geq m_0$ we have $\nu_{\la_{0}}\in\textrm{Bdd}(A_{m, \la_{0}})$. 
For fixed $m\geq m_0$, we want to find the parameter $\la_{m}$. If $\la=0$, the critical point $\nu_0$ belongs to the boundary of the maximum domain of definition of the Böttcher coordinate of $A^*(0)$ under $S_{n,a,Q}$. Therefore, the orbit of $\nu_0$   under $S_{n,a,Q}$ accumulates on $z=0$ but never maps onto it. Observe that $\jl $ converges uniformly to $\jn$ on compact subsets of $\mathbb{C}^*\setminus \mathbb{D}_{\epsilon}$ as $\lambda\rightarrow 0$, where $\epsilon>0$ is arbitrarily small and $\mathbb{D}_{\epsilon}$ denotes the disk of radius $\epsilon$ centered at $z=0$. Consequently, for fixed $m\geq 0$ and $\epsilon>0$, if $|\lambda|$ is small enough then $A_{m, \la}\subset \mathbb{D}_{\epsilon}$. Since $\nu_{\la}\rightarrow\nu_0$ as $\la\rightarrow 0$, it follows that if $|\lambda|$ is small enough then $\nu_{\la}$ belongs to the unbounded component of $\mathbb{C}^*\setminus A_{m, \la}$. It is enough to take $\la_m$ to be any such $\la$.

\begin{figure}
	\subfigure{ \includegraphics[width=5cm]{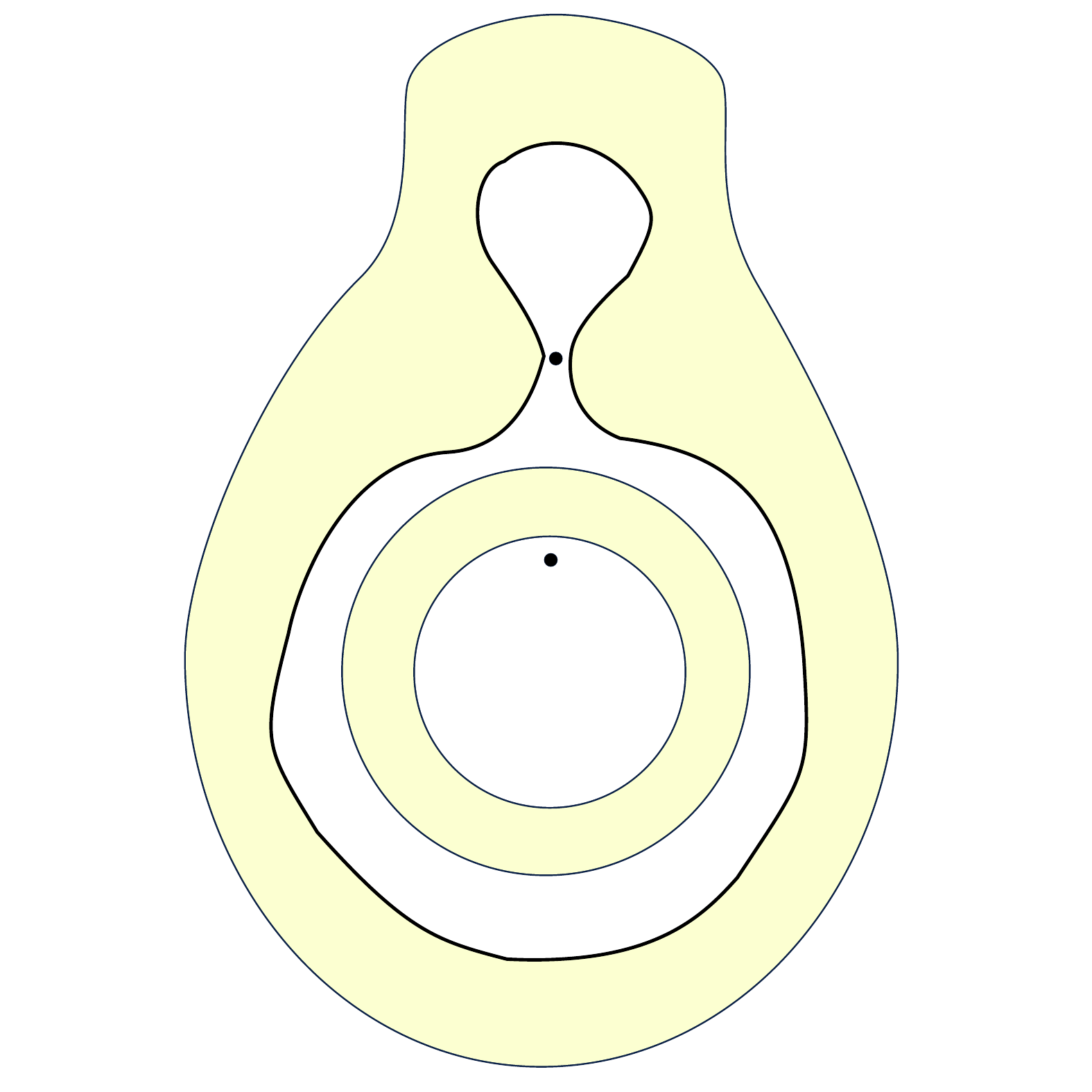}
             \put(-85, 95){$\cl$}
              \put(-82,65){$\jl(\cl)$}
 \put(-85,30){$A_{k-1, \la}$}
       \put(-85,7){$A_{k, \la}$}
	\includegraphics[width=5cm]{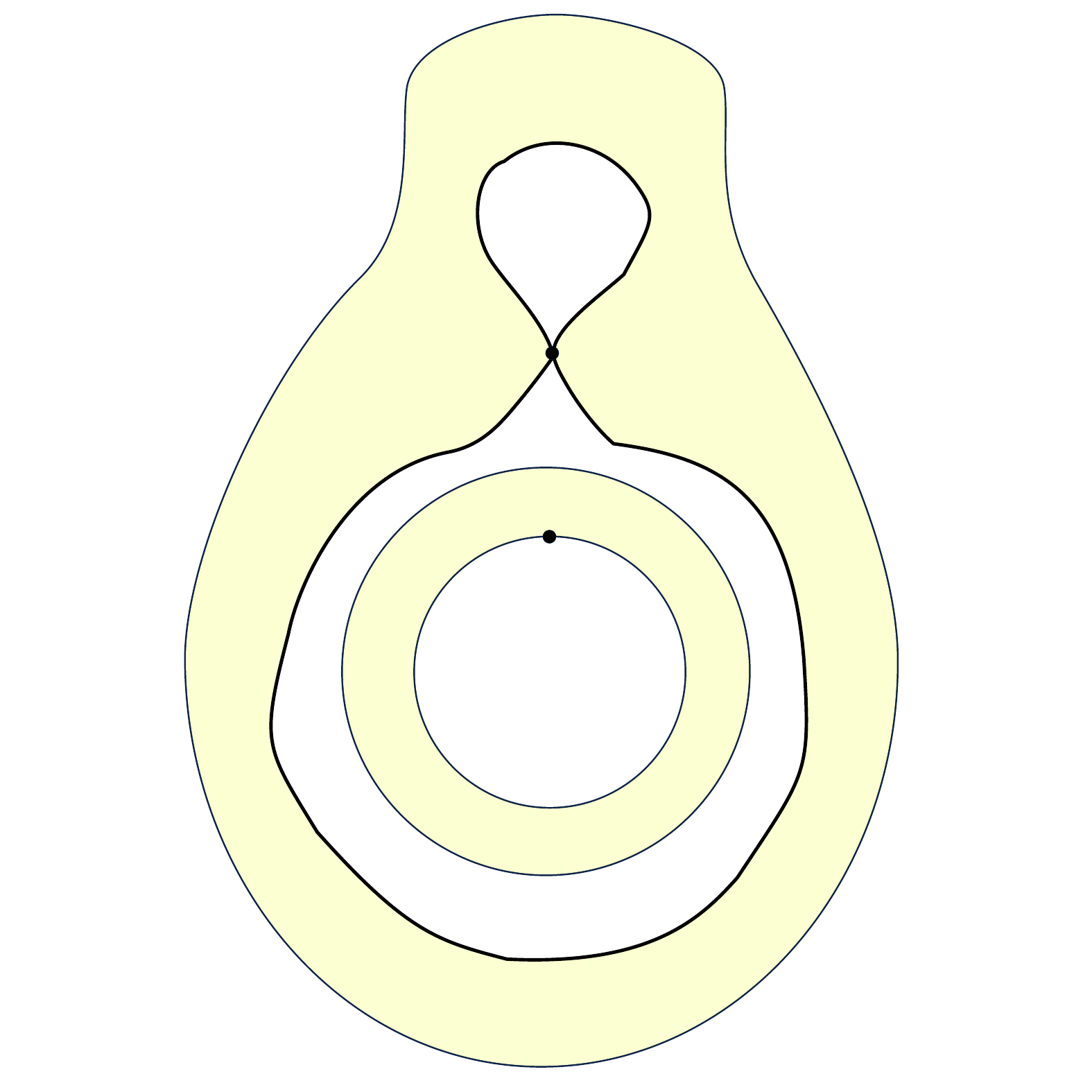}
    \put(-85,95){$\cl$}
       \put(-82,65){$\jl(\cl)$}
    \put(-85,30){$A_{k-1, \la}$}
       \put(-85,7){$A_{k, \la}$}
	\includegraphics[width=5cm]{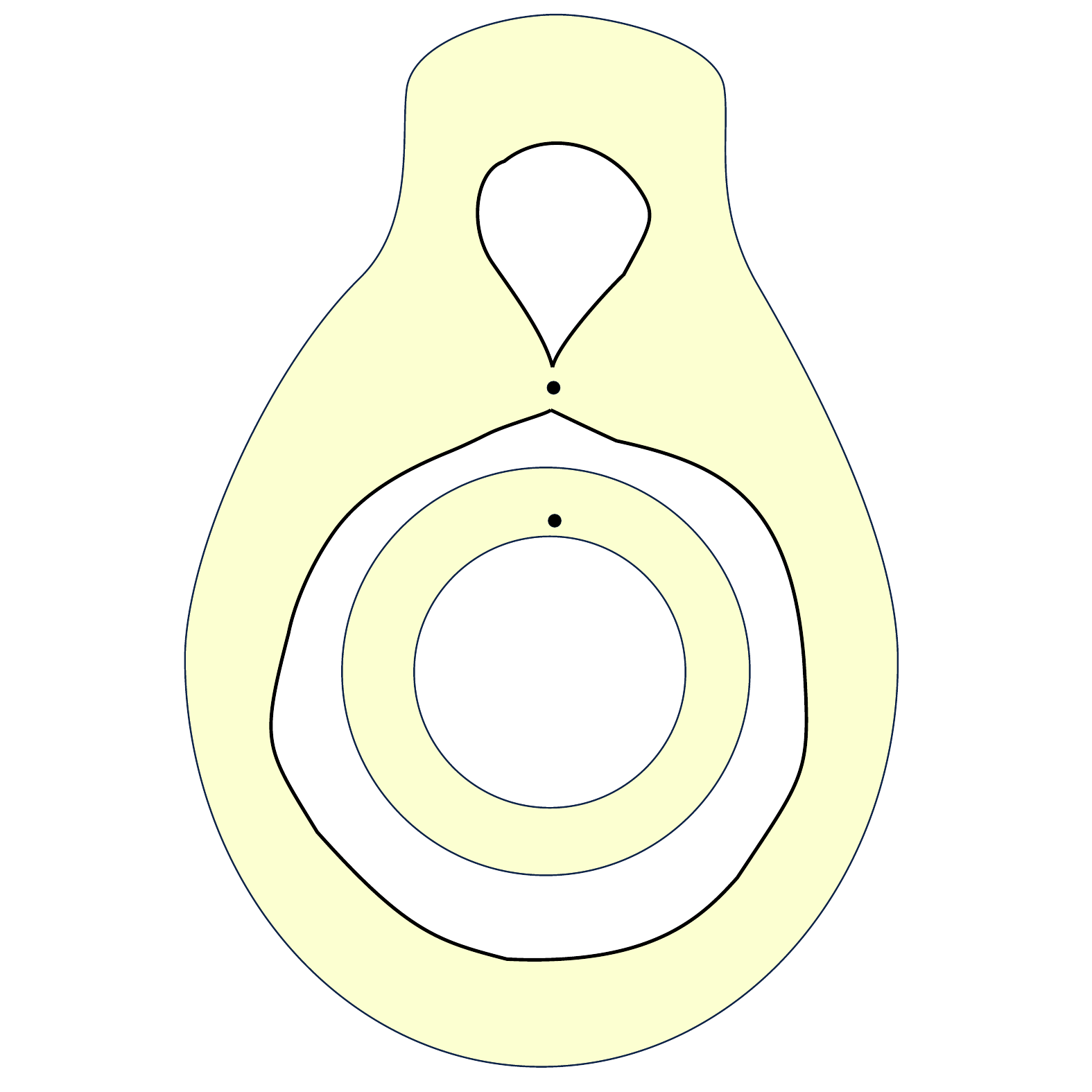}
    \put(-83,92){$\cl$}
                   \put(-82,65){$\jl(\cl)$}
 \put(-85,30){$A_{k-1, \la}$}
             \put(-85,7){$A_{k, \la}$}}
	\subfigure{\includegraphics[width=5cm]{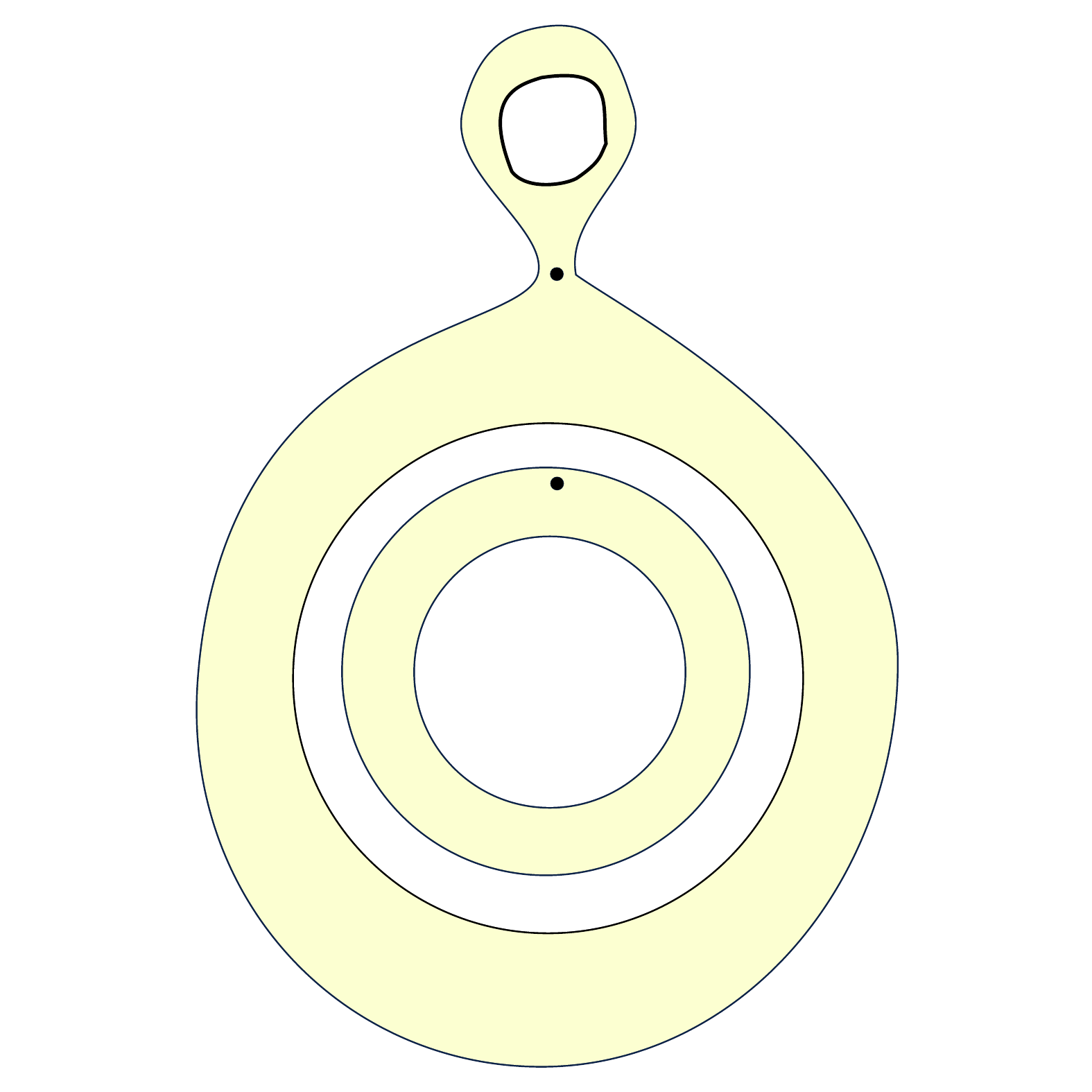}
 \put(-85,107){$\cl$}
              \put(-85,73){$\jl(\cl)$}
 \put(-85,30){$A_{k-1, \la}$}
       \put(-85,7){$A_{k, \la}$}
	\includegraphics[width=5cm]{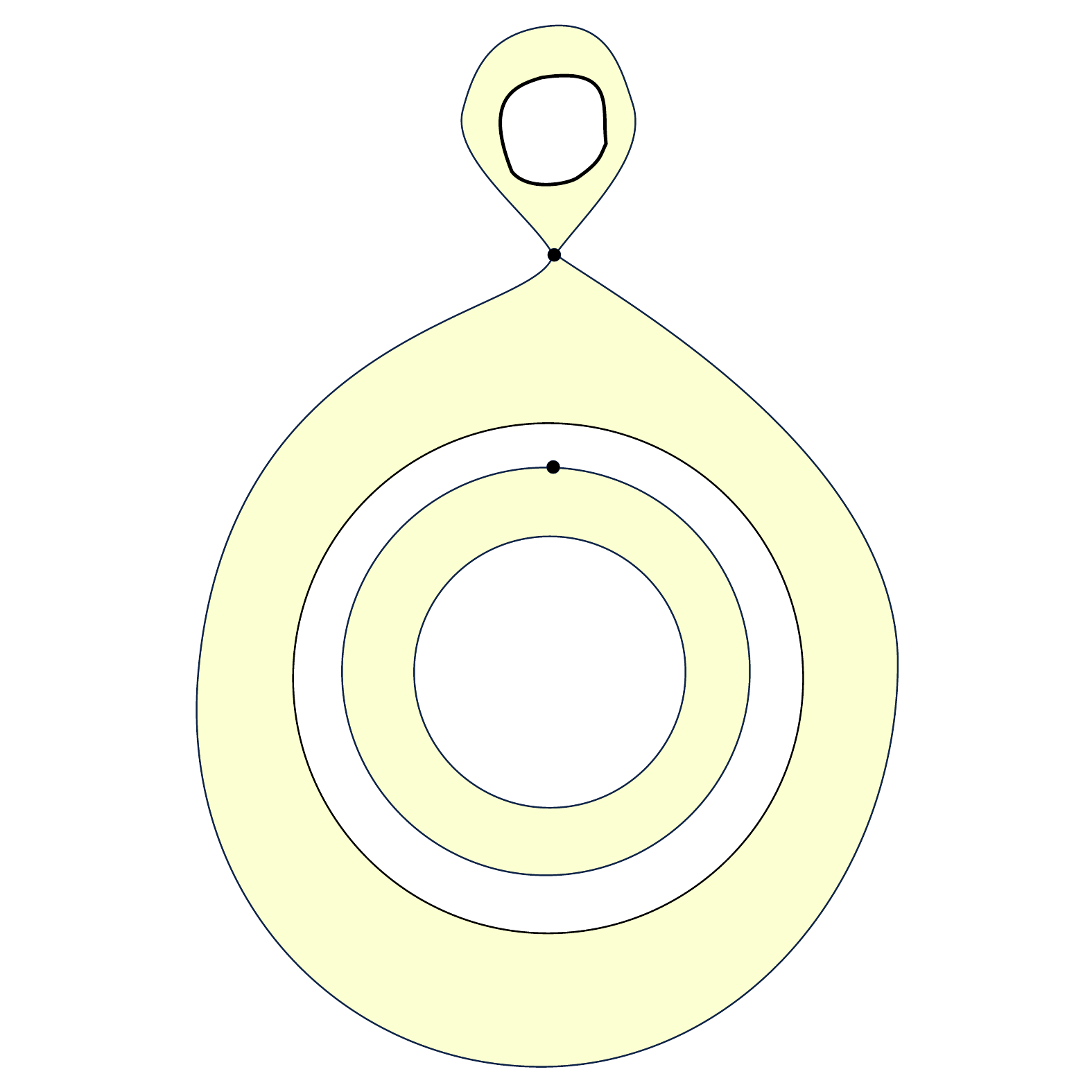}
 \put(-84,108){$\cl$}
              \put(-85,76){$\jl(\cl)$}
 \put(-85,30){$A_{k-1, \la}$}
       \put(-85,7){$A_{k, \la}$}
          \put(-75,133){\tiny{$A'$}}
	\includegraphics[width=5cm]{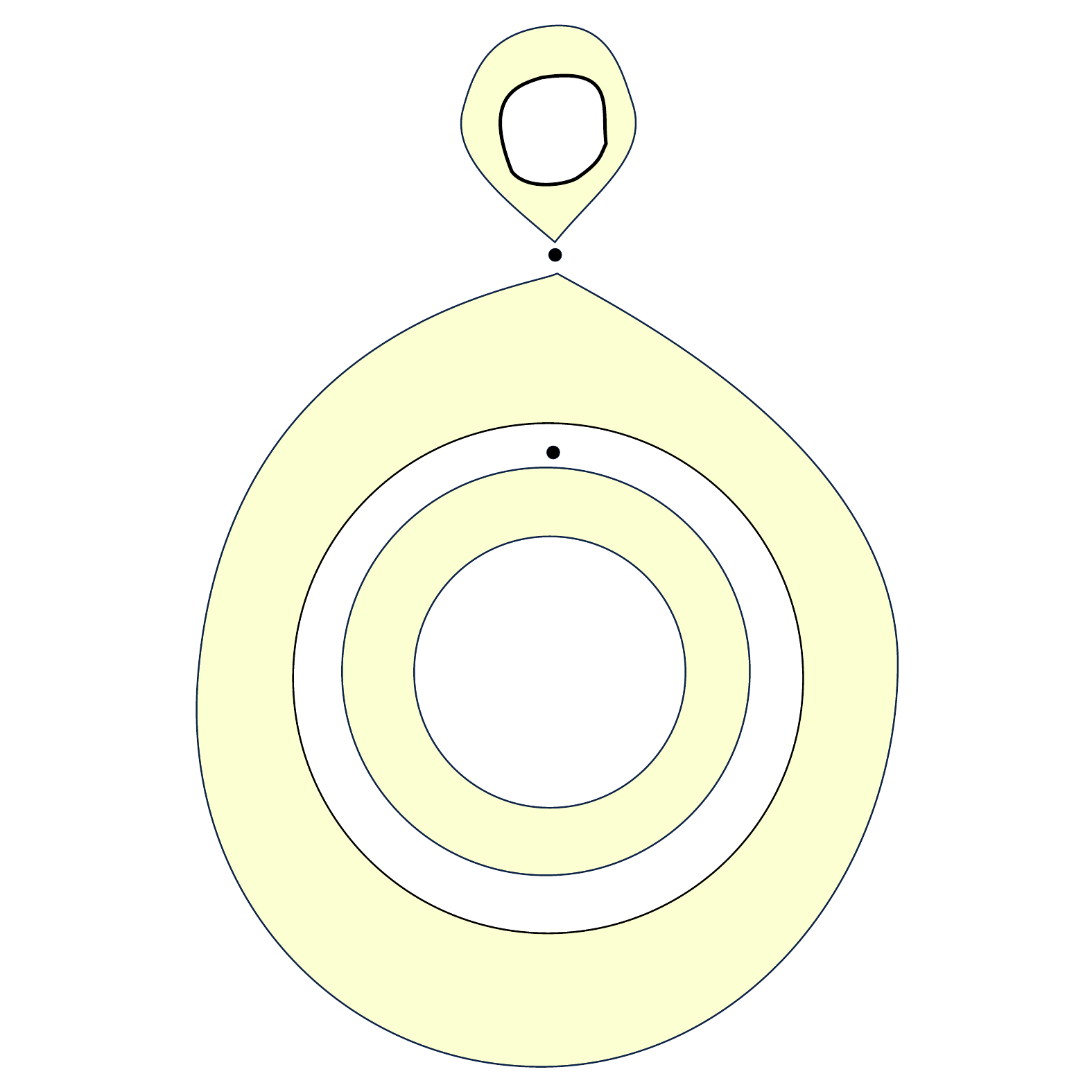}
 \put(-85,108){$\cl$}
              \put(-85,76){$\jl(\cl)$}
 \put(-85,30){$A_{k-1, \la}$}
       \put(-85,7){$A_{k, \la}$}}
 \caption{The top figures correspond to the possible cases of $\nu_{\la}$ lying in a neighbourhood of $\partial^\textrm{Int} A_{m,\la}$. The top figures correspond to the possible cases of $\nu_{\la}$ lying in a neighbourhood of $\partial^\textrm{Ext} A_{m,\la}$.}
	\label{fig:nearbif}
\end{figure}

To finish the proof we need to show that when we move continuously the parameter from $\la_0$ to $\la_m$ we need to find intermediate parameters $\la'_m$ such that $\nu_{\la'_{m}}\in A_{m, \la'_{m}}$.  By \thref{prop:continfty} we know that $\partial \mathcal{A}^*_{\infty}$ moves continuously with respect to $\la$. Since $\partial \mathcal{A}^*_{\infty}$ and $\partial T_{\lambda}$ cannot contain critical values, it follows that both boundary components of $A_{0,\la}$ move continuously with respect to $\la$. 
For fixed $\la'$, the set $\partial{A_{m,\la}}$, $m\geq1$,  moves continuously with respect to $\lambda$ in a neighbourhood of $\la'$ unless $\partial A_{m,\la'}$ (or an iterated image of $\partial A_{m,\la'}$) contains a critical point. Here by moving continuously we mean that every connected component of $\partial{A_{m,\la}}$ is a Jordan curve that moves continuously with respect to the Hausdorff metric (in particular, it does not pinch itself or split in several connected components). Notice that since there is only a free critical point, at most one of the 2 components of $\partial A_{m,\la}$ which surround $z=0$ may not move continuously for $\la$ in a neighbourhood of $\la'$. Using \thref{prop:conjblas} it can be proven that $\jl(\partial^{\textrm{Ext}} A_{m, \la_0})=\partial^{\textrm{Ext}} A_{m-1, \la_0}$ and $\jl(\partial^{\textrm{Int}} A_{m, \la_0})=\partial^{\textrm{Int}} A_{m-1, \la_0}$. Assume that for $\la'$ we have  $\nu_{\la'}\in\partial A_{m,\la'}$. Then $A_{m-1,\la}$ is an annulus that moves continuously with respect to $\la$ for all $\la$ in a neighbourhood of $\la'$. If $\nu_{\la'} \in \partial^\textrm{Int} A_{m,\la'}$ then $\jl(\nu_{\la'}) \in \partial^\textrm{Int} A_{m-1,\la'}$. By \thref{prop:preimcurves}, for $\la$ in a neighbourhood of $\la'$ exactly one of the following holds (see the three upper figures in Figure~\ref{fig:nearbif}):
 \begin{itemize}
 	\item If  $\jl(\nu_{\la}) \in \textrm{Int}(\partial^\textrm{Int} A_{m-1,\la})$ then  $\nu_{\la} \in \textrm{Int}(\partial^\textrm{Int} A_{m,\la})$ and $A_{m,\la}$ is doubly connected.
 	\item If  $\jl(\nu_{\la}) \in \partial^\textrm{Int} A_{m-1,\la}$ then  $\nu_{\la} \in \partial^\textrm{Int} A_{m,\la}$. Then,  $A_{m,\la}$ is doubly connected and $\partial^\textrm{Int} A_{m,\la}$ consists of the union of 2 Jordan curves.
 	\item If  $\jl(\nu_{\la}) \in  A_{m-1,\la}$ then  $\nu_{\la} \in  A_{m,\la}$ and $A_{m,\la}$ is triply connected.
 \end{itemize}

On the other hand,  if $\nu_{\la'} \in \partial^\textrm{Ext} A_{m,\la'}$ then $\jl(\nu_{\la'}) \in \partial^\textrm{Ext} A_{m-1,\la'}$. By \thref{prop:preimcurves}, for $\la$ in a neighbourhood of $\la'$ exactly one of the following holds (see the three lower figures in Figure~\ref{fig:nearbif}):
\begin{itemize}
	\item If  $\jl(\nu_{\la}) \in  A_{m-1,\la}$ then  $\nu_{\la} \in  A_{m,\la}$ and $A_{m,\la}$ is triply connected.
	\item If  $\jl(\nu_{\la}) \in \partial^\textrm{Ext} A_{m-1,\la}$ then  $\nu_{\la} \in \partial^\textrm{Ext} A_{m,\la}$. Then,  $A_{m,\la}$ is doubly connected, $\partial^\textrm{Ext} A_{m,\la}$ is a Jordan curve, and there is an extra preimage $A'$ of $A_{m-1,\la}$ such that $\partial A_{m,\la}\cap \partial A'=\nu_{\la}$.
	\item If  $\jl(\nu_{\la}) \in \textrm{Ext}(\partial^\textrm{Int} A_{m-1,\la})$ then  $\nu_{\la} \in \textrm{Ext}(\partial^\textrm{Int} A_{m,\la})$ and $A_{m,\la}$ is doubly connected.
\end{itemize}

It follows from the previous configurations that if we move continuously  the parameter $\la$ from $\la_0$ until $\la_m$ we can find parameters $\la'_m$ such that $\nu_{\la'_{m}}\in A_{m, \la'_{m}}$. This finishes the proof of the result.

\end{proof}
We can now proceed with proof of Theorem C.

\begin{proof}[Proof of Theorem C]

Fix $i, \, j, \, \ell$. We have to prove that there exists $\la$ for which there is a Fatou component of connectivity $\kappa=(n+1)^{i}d^j n^\ell+2$ and a Fatou component of connectivity $\kappa=(n+1)^i+2$. Recall that the results in Section 4 required the free critical point $\cl$ to lie in a preimage of $\ala$ which surrounds $z=0$. By \thref{lem5point2}, there exists $m > \ell$ such that $\cl \in A_{m, \la}$. The existence of the Fatou component of connectivity $\kappa=(n+1)^i+2$ is proven in \thref{chichicea1}\textit{(i)}. Since $\cl \in A_{m, \la}$ and $m> \ell$, the existence of a Fatou component of connectivity $\kappa=(n+1)^{i}d^j n^\ell+2$ follows from Theorem B. 

\end{proof}

\bibliography{bibliografia}
\bibliographystyle{amsalpha}
\end{document}